 \newcommand\norm[1]{\left\lVert#1\right\rVert}
\newcommand{\setwindow}[5]{
\def\xmin{#1}%
\def\ymin{#2}%
\def\xmax{#3}%
\def\ymax{#4}%
\pstFPsub\viewingwidth{#3}{#1}%
\pstFPdiv\result{\strip@pt#5}{\viewingwidth}%
\psset{unit=\result pt}}
\newtheorem{theorem}{Theorem}
\newtheorem{proposition}{Proposition}
\newtheorem{lemma}{Lemma}
\newtheorem{example}{Example}
\newtheorem{remark}{Remark}
\def\dim{\mathop{\rm dim}}
\def\st{{\rm s.t.}}
\def\R{{\mathbb R}}
\def\ie{{i.e.,} }
\def\eg{{e.g. }}
\def\G{{\mathcal G}}
\def\S{{\mathcal S}}
\def\C{{\mathcal C}}
\def\N{{\mathcal N}}
\def\D{{\mathcal D}}
\def\U{{\mathcal U}}
\def\M{{\mathcal M}}
\def\01{\ensuremath{0\mathord{-}1}}
\def\E{{\mathcal E}}
\def\V{{\mathcal V}}
\def\Q{{\mathcal Q}}
\newcommand{\bi}{\begin{list}{$\bullet$}{\setlength{\parsep}{0pt}\setlength{\itemsep}{0pt}}}
\def\XXsum#1#2#3{{\setbox0=\hbox{$#1{#2#3}{\sum}$ }
\vcenter{\hbox{$#2#3$ }}\kern-.5\wd0}}
\newcounter{claim} 
\def\XXsum#1#2#3{{\setbox0=\hbox{$#1{#2#3}{\sum}$ }
\vcenter{\hbox{$#2#3$ }}\kern-.5\wd0}}
\DeclareMathOperator{\prob}{\mathbb P}
\DeclareMathOperator{\avg}{\mathbb E}
\title{Efficient Joint Object Matching via Linear Programming
}
\author{Antonio De Rosa
\thanks{Department of Mathematics, University of Maryland, 4176 Campus Dr, College Park, MD 20742, USA.
             E-mail: {\tt  anderosa@umd.edu}.
             }
\and
Aida Khajavirad
\thanks{Department of Industrial and Systems Engineering, Lehigh University, Bethlehem, PA 18015, USA.
             E-mail: {\tt aida@lehigh.edu}.
             }
}
\begin{document}

\maketitle

\date{}
\begin{abstract}
Joint object matching, also known as multi-image matching, namely, the problem of finding consistent partial maps among all pairs of objects within a collection, is
a crucial task in many areas of computer vision. This problem subsumes bipartite graph matching and graph partitioning as special cases and is NP-hard, in general.
We develop scalable linear programming (LP) relaxations with theoretical performance guarantees for joint object matching.
We start by proposing a new characterization of consistent partial maps; this in turn enables us to formulate joint object matching as an integer linear programming (ILP) problem. To construct strong LP relaxations, we study the facial structure of the convex hull of the feasible region of this ILP, which we refer to as the joint matching polytope. We present an exponential family of facet-defining inequalities that can be separated in strongly polynomial time, hence obtaining
 a partial characterization of the joint matching polytope that is both tight and cheap to compute. To analyze the theoretical performance
of the proposed LP relaxations, we focus on permutation group synchronization, an important special case of joint object matching.
We show that under the random corruption model for the input maps, a simple LP relaxation, that is, an LP containing only a very small fraction of the proposed facet-defining inequalities,  recovers the ground truth with high probability if the corruption level is below $40\%$.
Finally, via
a preliminary computational study on synthetic data, we show that the proposed LP relaxations outperform a popular SDP relaxation both in terms of recovery and tightness.
\end{abstract}

{\bf Key words.} \emph{Joint object matching; Convex relaxations; Linear programming; Recovery guarantee.}

\vspace{0.1cm}

{\bf AMS subject classifications.} \emph{90C05, 90C10, 05C70, 	65D19,  68Q87.}

\section{Introduction}
\label{sec:intro}

Object matching techniques are widely used in many areas of computer vision such as image
analysis, object recognition, robotics, biomedical identification, and
object tracking. While there is a rich literature on finding isomorphisms between a pair of objects,
the task of~\emph{joint object matching}, also known as ~\emph{multi-image matching}, \ie finding consistent maps
among \emph{all} pairs of objects within a collection is underdeveloped.
The most classic example for joint object matching is the problem of matching feature points among many images of the same
object, a step used for instance in image recognition~\cite{Dermici06} and in structure from motion~\cite{Agar11}.
Almost all early approaches to tackle joint object matching relied on sequential matchings of pairs of objects often yielding erroneous results
when the input data is noisy.
Joint object matching is NP-hard in general; common techniques for tackling this problem are graph neural networks~\cite{Li19,Yan20}, spectral methods~\cite{PacKonSin13,Ling20,SheHuaSreSan16}, and semidefinite programming (SDP) relaxations~\cite{HuaGui13,CheGuiHua14,ZZD15,HHTG18}.
Indeed, to date, the only existing convex relaxations for joint object matching are SDP relaxations.
It is well-understood that in spite of their polynomial-time complexity, SDPs are too expensive to solve and are often impractical for large-scale problems.
In this paper, we develop scalable LP relaxations with theoretical performance guarantees for joint object matching; we start by formally defining the problem.

\subsection{Problem Statement}

We consider the problem of joint object matching in its full generality; that is, when the objects are only partially similar and the input is possibly incomplete.
To formally define this problem, we introduce some terminology which is mostly adapted from~\cite{CheGuiHua14}. Suppose that we have a collection of $n$
objects $\S_i$, $i \in [n]:= \{1, \ldots, n\}$ each of which consists of $d_i$ elements for some $n \geq 3$ and $d_i \geq 1$ such that $\max_{i \in [n]} d_i \geq 2$.
The case with $n =2$ is equivalent to bipartite graph matching (see for example chapter~8 of~\cite{schrijver86})
and the case with $d_i=1$ for all $i \in [n]$
is equivalent to graph partitioning (see for example~\cite{ChoRao93}), both of which are well-studied problems in combinatorial optimization.
Given two discrete sets $\S$ and $\S'$, a subset $\phi \subset \S \times \S'$ is called a \emph{partial map} if each
element of $\S$ (resp. $\S'$) is paired with at most one element of $\S'$ (resp. $\S$); in particular, not all
elements need to be paired. We denote by $\phi_{ij}: \S_i \rightarrow \S_j$ the partial map between the pair of objects $\S_i$ and $\S_j$.
In the special case where all elements in $\S_i$ and $\S_j$ are paired, we say that the two objects have~\emph{full similarity}; otherwise, we say that
the two objects have~\emph{partial similarity}. Note that two objects with full similarity must contain the same number of elements.

In joint object matching the input consists of \emph{noisy} pair-wise partial maps $\phi^{\rm in}_{ij}$ between some of the objects $\S_i$ and $\S_j$, $i, j \in [n]$.
These input maps are obtained using off-the-shelf pairwise graph matching algorithms and often contain some erroneous information.
An undirected graph $\G = (\V, \E)$ is called a \emph{map graph} for the objects $\S_i, i \in [n]$ with $\V := \{\S_1, \S_2, \cdots, \S_n\}$
and $(\S_i, \S_j) \in \E$ whenever an input partial map $\phi^{\rm in}_{ij}$ is available. For notational simplicity, throughout this paper instead of $(\S_i, \S_j) \in \E$,
we write $(i, j) \in \E$.
If the map graph is not a complete graph, we say that the input is \emph{incomplete}.
The objective is to find a collection of \emph{consistent} partial maps $\phi_{ij}$ for all $1 \leq i < j \leq n$
that are close to the input maps.
By consistent partial maps, we imply that for any $i,j,k \in [n]$, whenever an element $s \in \S_i$ is paired with an element $s' \in \S_j$ which in turn is paired with an element $s'' \in \S_k$, then $s$ and $s''$ are paired as well. This condition is often referred to as
\emph{cycle consistency} in the literature~\cite{HuaGui13}, as consistency can be achieved by requiring that the composition of maps between two objects is independent of the connecting path.

In the following, we formulate joint object matching as a mathematical optimization problem. To this end, we use a binary $d_i \times d_j$ matrix $X(i,j)$ to encode $\phi_{ij}$;
that is, the $s,s'$ entry of $X(i,j)$, denoted by $X_{ss'}(i,j)$, equals one if and only if $(s,s') \in \phi_{ij}$. It then follows that a binary matrix $X(i,j)$ satisfying
\begin{equation}\label{subD}
X(i,j) {\bf 1}_{d_j} \leq {\bf 1}_{d_j}, \qquad X^T(i,j) {\bf 1}_{d_i} \leq {\bf 1}_{d_i},
\end{equation}
corresponds to a partial map, where ${\bf 1}_{d_i} \in \R^{d_i}$  denotes a vector of all ones. Moreover, cycle consistency is achieved by requiring:
\begin{align}\label{consis}
&X(i,j) X(j,k) \leq X(i,k),\nonumber\\
&X(j,i) X(i,k) \leq X(j,k),\quad \forall 1 \leq i < j < k \leq n,\\
&X(i,k) X(k,j) \leq X(i,j),\nonumber
\end{align}
where $X(j,i) = X^T(i,j)$ for any $i < j$, $X(i,j) X(j,k)$ denotes the standard matrix product between $X(i,j)$ and $X(j,k)$,
and inequalities are all component-wise. In case of full similarity among all objects, $X(i,j)$ are permutation matrices for all $1 \leq i < j \leq n$; this special case
is often referred to as \emph{permutation group synchronization} in the literature (see for example~\cite{PacKonSin13}). It can be checked that cycle consistency
in this case is obtained by imposing:
\begin{equation}\label{consisPerm}
X(i,j) X(j,k) = X(i,k)\quad \forall 1 \leq i < j < k \leq n.
\end{equation}
The following example gives an illustration of partial maps and cycle consistency.
\begin{example}
Let $n = 3$, $d_1 =1$, $d_2 = d_3 = 2$. Then the following are consistent partial maps:
\begin{equation}
X(1,2) =
\begin{pmatrix}
0 & 1
\end{pmatrix}, \qquad
X(1,3) =
\begin{pmatrix}
0 & 0
\end{pmatrix}, \qquad
X(2,3) =
\begin{pmatrix}
1 & 0 \\
0 & 0
\end{pmatrix},
\end{equation}
indicating that the only element of object~1 corresponds to the second element of object~2, and the first element of object~2 corresponds to the first element of object~3.
However, the following partial maps are not consistent:
\begin{equation}
X(1,2) =
\begin{pmatrix}
1 & 0
\end{pmatrix}, \qquad
X(1,3) =
\begin{pmatrix}
0 & 1
\end{pmatrix}, \qquad
X(2,3) =
\begin{pmatrix}
0 & 0 \\
0 & 1
\end{pmatrix}.
\end{equation}
This is because we have $X_{11}(1,2) = X_{12}(1,3) = 1$, which by cycle consistency implies $X_{12}(2,3) = 1$.
Indeed by letting $X_{12}(2,3) = 1$, we obtain consistent partial maps indicating that the only element of object~1 corresponds to the first element of object~2 and to the second element of object~3, and the second element of object~2 corresponds to the second element of object~3.
\end{example}

Let $X^{\rm in}(i,j) \in \{0,1\}^{d_i\times d_j}$
denote the matrix representation of the input partial map $\phi^{\rm in}_{ij}$.
We would like to find consistent partial maps $X(i,j)$, $1\leq i < j \leq n$, so as to minimize
\begin{equation}\label{objective}
f=\sum_{(i,j) \in \E}{\norm{X^{\rm in}(i,j) - X(i,j) }^2_F},
\end{equation}
where $\norm{\cdot}_F$ denotes the Frobenius norm.
Denote by $\mathcal{N}$ the number of matched pairs in the input, \ie $\mathcal{N} = \sum_{(i,j) \in \E} {\sum_{t\in [d_i]}\sum_{q \in [d_j]}{X^{\rm in}_{tq}(i,j)}}$.
Since matrices $X^{\rm in}(i,j)$ and $X(i,j)$ are binary-valued, the objective function~\eqref{objective} can be written as:
$$
f = \mathcal{N}+ \sum_{(i,j) \in \E}{\left\langle{\bf 1}_{d_i}{\bf 1}_{d_j}^T-2 X^{\rm in}(i,j), \; X(i,j)\right\rangle},
$$
where $\langle \cdot, \cdot \rangle$ denotes the standard matrix inner product.
It then follows that joint object matching can be formulated as follows:
\begin{align}\label{jom}
\tag{JOM}
\min \quad &\sum_{(i,j) \in \E}{\left\langle{\bf 1}_{d_i}{\bf 1}_{d_j}^T-2 X^{\rm in}(i,j), \; X(i,j)\right\rangle} \nonumber\\
\st  \quad & X(i,j) {\bf 1}_{d_j} \leq  {\bf 1}_{d_j}, \quad X^T(i,j) {\bf 1}_{d_i} \leq {\bf 1}_{d_i} , \quad \forall 1 \leq i < j \leq n, \nonumber\\
&X(i,j) X(j,k) \leq X(i,k),\nonumber\\
&X(j,i) X(i,k) \leq X(j,k),\quad \forall 1 \leq i < j < k \leq n,\nonumber\\
&X(i,k) X(k,j) \leq X(i,j),\nonumber\\
           &  X(i,j) \in \{0,1\}^{d_i\times d_j}, \quad \forall 1 \leq i < j \leq n.\nonumber
\end{align}
In this paper, we are interested in the quality of convex relaxations for Problem~\eqref{jom}.
To date, the only existing convex relaxations for joint object matching are SDP relaxations.
In the following, we describe a widely-used SDP relaxation of Problem~\eqref{jom} first proposed in~\cite{HuaGui13}.


\subsection{SDP relaxations}

Assume that there exists a universe $\U$ consisting of $m$ elements
such that each object $\S_i$ is a (partial) image of $\U$ and each element in $\U$ is contained in at least one object $\S_i$.
We should remark that the set $\U$ or even its size is not known a priori. We only rely on its existence to construct the SDP relaxation.
Let the matrix $Y(i) \in \{0,1\}^{d_i\times m}$ encode the correspondences between
$\S_i$ and $\U$, \ie for any $s \in \S_i$ and $s' \in \U$,
the $s, s'$ entry of $Y(i)$ equals one if and only if $s$ corresponds to $s'$. It then follows that
$X(i,j) = Y(i) Y^T(j)$ for all $1 \leq i < j \leq n$. Define $X(i,i) = I_{d_i}$ for all $i \in [n]$, where $I_{d_i}$ denotes a $d_i\times d_i$ identity matrix.
Let $\bar d = \sum_{i \in [n]}{d_i}$; denote by $X \in \{0,1\}^{\bar d \times \bar d}$ a matrix whose $(i,j)$-th block is given by $X(i,j)$, where we let $X(j,i) = X^T(i,j)$. Let $Y = (Y^T(1), \cdots, Y^T(n))^T$. It can be checked that
$X = Y Y^T$ which implies $X$ is a rank-$m$ positive semidefinite matrix.
Similarly denote by $X^{{\rm in}}$ a $\bar d \times \bar d$ matrix whose $(i,j)$-th
block is given by $X^{{\rm in}}(i,j)$,
where we let $X^{{\rm in}}(i,i)  = I_{d_i}$. Then an SDP relaxation of Problem~\eqref{jom} is given by:
\begin{align}\label{sdp}
\min \quad &\sum_{(i,j) \in \E}{\left\langle{\bf 1}_{d_i}{\bf 1}_{d_j}^T-2 X^{\rm in}(i,j), \; X(i,j)\right\rangle} \\
\st  \quad  & X\succeq 0, \quad X \geq 0, \nonumber\\
      & X(i,i) = I_{d_i}, \quad \forall i \in [n], \nonumber\\
      & X(i,j) {\bf 1}_{d_j} \leq {\bf 1}_{d_j}, \quad X^T(i,j) {\bf 1}_{d_i} \leq {\bf 1}_{d_i} , \quad \forall 1 \leq i < j \leq n, \nonumber
\end{align}
where $X\succeq 0$ and $X \geq 0$ mean that $X$ is positive semidefinite and component-wise nonnegative, respectively.
In case where $X(i,j)$, $1 \leq i < j \leq n$,
are all permutation matrices, the authors of~\cite{HuaGui13} proved that the positive semidefiniteness condition $X \succeq 0$ is equivalent to cycle consistency.  This in turn implies that in case of fully similar objects, replacing $X \geq 0$ by
$X \in \{0,1\}^{\bar d \times \bar d}$ in Problem~\eqref{sdp}, one obtains an exact reformulation of Problem~\eqref{jom} as a binary SDP.

It is important to emphasize that the size of the universe $m$ is \emph{not} known a priori; the authors of~\cite{CheGuiHua14} proposed a spectral technique to \emph{estimate} $m$ using the input data $X^{\rm in}$. They first \emph{trim} $X^{\rm in}$ to remove the bias from over represented rows or columns. Let us denote by $\tilde X^{\rm in}$ the trimmed matrix. Denote by $\lambda_k$ the $k$-th largest eigenvalue of $\tilde X^{\rm in}$. Then they let $\hat m =\arg \max_{d_{\max} \leq k < \bar d} |\lambda_k - \lambda_{k+1}|$, where $d_{\max} = \max_{i \in [n]} d_i$, and where $\hat m$ denotes the estimate of $m$.
Subsequently, they  used this estimate to further strengthen the SDP relaxation~\eqref{sdp}; that is, they replaced $X \succeq 0$ by the following constraint:
\begin{equation}\label{superSDP}
\begin{pmatrix}
\hat m & {\bf 1}^T_{\bar d} \\
{\bf 1}_{\bar d} & X
\end{pmatrix} \succeq 0.
\end{equation}
In~\cite{ZZD15}, the authors employed Burer-Monteiro factorization~\cite{BM05} by letting $X = Y Y^T$ to tackle the SDP. Recall that $Y$ is a $\bar d \times m$ matrix; hence, to benefit from the low-rank approach, the availability of a good upper bound on the size of the universe is essential.
The authors of~\cite{ZZD15} observed that the estimation technique of~\cite{CheGuiHua14} is inaccurate when the input is noisy and incomplete. They then chose to use the upper bound $2 \hat m$, where $\hat m$ is obtained by the spectral method of~\cite{CheGuiHua14} outlined above. To conclude, constraint~\eqref{superSDP} should only be used if a reliable upper bound on the size of the universe is available.

\subsection{Recovery guarantees under stochastic models}

To perform a theoretical analysis of various existing and new algorithms for data science applications,
a recent stream of research in mathematical data science is focused on obtaining sufficient conditions for recovery of the ground truth under various stochastic models for the input~(see for example~\cite{CheGuiHua14,AbbBanHal16,HajWuXu16,dPIdaTim20,AntoAida20,dPIda22}).
We say that an optimization algorithm \emph{recovers} the ground truth, whenever its unique optimal solution coincides with the ground truth.
In the context of joint object matching, the question can be formally stated as follows: given a probabilistic model for the noise in input partial maps, what is the maximum level of corruption under which the (optimization) algorithm \emph{recovers} the ground truth partial maps with high probability? Throughout this paper, by high probability, we imply the probability tending to 1 as the number of objects $n \rightarrow \infty$.

Huang and Guibas~\cite{HuaGui13} considered the SDP relaxation~\eqref{sdp} for the
permutation group synchronization problem and obtained a deterministic sufficient condition for recovery, implying under a random model recovery is possible with high probability when the corruption level remains below 50\%.
In~\cite{CheGuiHua14}, the authors considered the general joint object matching with partially similar objects,
together with the SDP relaxation~\eqref{sdp} enhanced by constraint~\eqref{superSDP}. They first significantly improved their earlier
deterministic recovery guarantee in~\cite{HuaGui13}. Subsequently, they focused on the \emph{random corruption model}, roughly defined as follows: each observed $X^{\rm in}(i,j)$
coincides with ground truth independently with probability $p_{\rm true}$, and each observed but incorrect $X^{\rm in}(i,j)$ is independently drawn from a set of partial maps satisfying
$\avg[X^{\rm in}(i,j)] = \frac{\bf{1}\bf{1}^T}{m}$, where $\avg[\cdot]$ denotes the expectation of a random variable. Note that, in this model, the authors assume that the size of the universe $m$ is known a priori, an assumption which often does not hold. They proved that the SDP relaxation recovers the ground truth with high probability if $p_{\rm true} > C \frac{\log^2(nm)}{\sqrt{n}}$, for some universal large constant $C$. In~\cite{BajGao18,Ling20}, the authors considered spectral methods for solving permutation group synchronization under the random corruption model. They proved that the spectral algorithm recovers the ground truth with high probability, if $p_{\rm true} > C \sqrt{\frac{\log(nd)}{n}}$, where $d$ denotes the number of elements in each object. This recovery guarantee is nearly optimal in terms of information
theoretical limits~\cite{ChSuGo16}. In~\cite{PaKoSaSi14}, the authors studied the recovery properties of a spectral algorithm for permutation group synchronization under the additive Gaussian noise.

To summarize, for joint object matching under the random corruption model, both SDP relaxations and spectral methods exhibit near optimal recovery thresholds.
It is well understood that solving SDPs is computationally prohibitive for large-scale problems. Spectral methods on the other hand, only require estimating the leading eigenvector of a matrix, and are quite efficient in practice.
However, works like~\cite{Wein16,Monta16} suggest that spectral methods are extremely sensitive to slight modification in the
generative model. Indeed, convex relaxations
enjoy robustness to adversarial corruptions of the inputs for statistical problems that spectral methods do
not. Therefore, it is of great interest to understand the most efficient convex relaxation algorithms for solving joint object matching.

\subsection{Our contribution}

In this paper, we propose scalable LP relaxations with theoretical performance guarantees for joint object matching.
To this end, we first present an alternative characterization of consistent partial maps. This in turn enables us to formulate joint object matching as an ILP.
Subsequently, with the objective of  constructing strong LP relaxations, we study the facial structure of the convex hull of the feasible region of the ILP.
As part of this polyhedral study, to effectively approximate cycle consistency, we introduce consistency inequalities, an exponential family of facet-defining inequalities that can be separated in strongly polynomial time.
We next study the theoretical properties of the proposed LP relaxation for the permutation group synchronization problem; we show that under the random corruption model, a simple LP relaxation containing only a very small fraction of consistency inequalities, recovers the ground truth with high probability if $p_{\rm true} > 0.585$. While in the asymptotic regime our recovery guarantee is suboptimal, in many cases of practical interest, the corruption level is below \%40. It is for such applications that our proposed LP provides a robust and efficient matching algorithm. In fact, our numerical experiments suggest that for moderate values of $n$ and $d$, the proposed LP relaxation outperforms the SDP relaxation
in recovering the ground truth.

The remainder of the paper is organized as follows. In Section~\ref{sec:lp} we present a novel LP relaxation for joint object matching.
Subsequently, in Section~\ref{sec:dualCertificate} we focus on the permutation group synchronization problem under the random corruption model and obtain a recovery guarantee for
the proposed LP relaxation. We present our numerical experiments in Section~\ref{sec:numerics}. Section~\ref{appendix} contains further results regarding the facial structure of the joint matching polytope that were omitted from Section~\ref{sec:lp}.

\section{Linear programming relaxation}
\label{sec:lp}
In this section, we propose a scalable LP relaxation with performance guarantees for joint object matching.
We start by presenting an ILP formulation for Problem~\eqref{jom}.
Let us first obtain a linear characterization of cycle consistency constraints~\eqref{consis}.
A collection of partial maps $X(i,j)$, $1 \leq i < j \leq n$, is consistent
if for any $1 \leq i < j < k \leq n$ and for any $l \in [d_i], t \in [d_j], q \in [d_k]$, whenever two out of the three elements
$X_{lt}(i,j)$, $X_{tq}(j,k)$, $X_{lq}(i,k)$ equal one, the third one equals one as well.
It is simple to check that this condition is enforced by the following system of inequalities:
\begin{eqnarray}
\left\{
\begin{array}{ll}
- X_{lt}(i,j) + X_{tq}(j,k) +X_{lq}(i,k)   \leq 1,\\
X_{lt}(i,j) - X_{tq}(j,k) +X_{lq}(i,k) \leq 1, \\
X_{lt}(i,j) + X_{tq}(j,k) -X_{lq}(i,k) \leq 1,\\
\end{array} \right. \forall l \in [d_i],t \in [d_j], q \in [d_k], \; 1 \leq i < j < k\leq n. \label{gtri}
\end{eqnarray}
Hence an ILP formulation for Problem~\eqref{jom} is given by:
\begin{align}\label{ip}
\tag{IP}
\min \quad & \sum_{(i,j) \in \E}{\left\langle{\bf 1}_{d_i}{\bf 1}_{d_j}^T-2 X^{\rm in}(i,j), \; X(i,j)\right\rangle}\nonumber\\
\st \quad  & X(i,j) {\bf 1}_{d_j} \leq {\bf 1}_{d_j}, \quad X^T(i,j) {\bf 1}_{d_i} \leq {\bf 1}_{d_i} , \quad \forall 1 \leq i < j \leq n,\nonumber \\
         &  \left\{
\begin{array}{ll}
- X_{lt}(i,j) + X_{tq}(j,k) +X_{lq}(i,k)   \leq 1,\\
X_{lt}(i,j) - X_{tq}(j,k) +X_{lq}(i,k) \leq 1, \\
X_{lt}(i,j) + X_{tq}(j,k) -X_{lq}(i,k) \leq 1,\\
\end{array} \right. \forall l \in [d_i],t \in [d_j],q \in [d_k], \; \forall 1 \leq i < j < k\leq n, \nonumber\\
           & X(i,j) \in \{0,1\}^{d_i\times d_j}, \quad \forall 1 \leq i < j \leq n. \nonumber
\end{align}

\begin{remark}\label{rem1}
Inequalities~\eqref{gtri} can be considered as a generalization of triangle inequalities used for instance in graph partitioning problems~\cite{GroWak90,ChoRao93}. Given a graph with $n$ nodes, the goal in graph partitioning is to partition the nodes of the graph into at most $K$ subsets
such that some similarity measure across different partitions is minimized. In this context, for each pair of nodes $i,j \in [n]$ a
variable $y_{ij}$ is defined as follows: $y_{ij} = 1$ if nodes $i$ and $j$ belong to the same partition and $y_{ij} = 0$, otherwise.
The triangle inequalities are then defined as
$$
y_{ij} + y_{ik} - y_{jk} \leq 1, \quad \forall i \neq j \neq k \in [n].
$$
The above inequality states that if $i$ and $j$ are in the same cluster, and $i$ and $k$ are in
the same cluster, then also $j$ and $k$ must be in the same cluster. It then follows that inequalities~\eqref{gtri} are
a generalization of triangle inequalities for the case node $i$ of the graph represents object $\S_i$ consisting of $d_i$ elements where $d_i \geq 2$ for some $i \in [n]$.
\end{remark}

Now consider the simple LP relaxation of Problem~\eqref{ip}, that is, the LP obtained by replacing $X(i,j) \in \{0,1\}^{d_i\times d_j}$ by the constraint
$X(i,j) \geq 0$. Let us refer to this LP as the~\emph{basic LP}. In the next example, we show that inequalities~\eqref{gtri} are not implied by the SDP relaxation~\eqref{sdp}.

\begin{example}\label{compare}
Let $n = 3$ and $d_1 = d_2 = d_3= 2$; it can be checked that the following is feasible for Problem~\eqref{sdp}:
\begin{equation}\label{ex2}
X(1,2) = X(1,3) =
\begin{pmatrix}
\frac{3}{4} & \frac{1}{4} \\
\frac{1}{4}  & \frac{3}{4}
\end{pmatrix}, \qquad
X(2,3) =
\begin{pmatrix}
\frac{1}{4} & \frac{3}{4} \\
\frac{3}{4} & \frac{1}{4}
\end{pmatrix}.
\end{equation}
Now consider the inequality obtained by letting $l=t=q = 1$ in the second inequality of system~\eqref{gtri}:
$$
X_{11}(1,2)-X_{11}(2,3)+X_{11}(1,3) \leq 1.
$$
Substituting~\eqref{ex2} in the above inequality yields $\frac{3}{4} - \frac{1}{4}+ \frac{3}{4} \not\leq 1$.
\end{example}

In the following, we improve the strength of the basic LP relaxation by obtaining a partial linear characterization of cycle consistency.
Subsequently, we establish the strength of the proposed inequalities by showing that they define facets of the convex hull of the feasible region of Problem~\eqref{ip}.

\subsection{Consistency inequalities}
We now present a generalization of inequalities~\eqref{gtri} for joint object matching.

\begin{proposition}\label{validity}
%
Let $1 \leq i < j <k \leq n$.
Then following inequalities are valid for the feasible region of Problem~\eqref{ip}:
\begin{align}\label{superg}
-&\sum_{t \in D_1}{\sum_{q \in D_2}{X_{tq}(i,j)}}+\sum_{q \in D_2}{X_{ql}(j,k)} +\sum_{t \in D_1}{X_{tl}(i,k)}\leq 1, \nonumber\\
&\qquad \qquad \qquad \qquad \qquad\forall D_1 \subseteq [d_i], \; D_2 \subseteq [d_j], \; l \in [d_k], \; D_1, D_2 \neq \emptyset \nonumber\\
&\sum_{t \in D_1}{X_{lt}(i,j)} -\sum_{t \in D_1}{\sum_{q\in D_2}{X_{tq}(j,k)}}+\sum_{q\in D_2} {X_{lq}(i,k)}\leq 1,  \nonumber\\
&\qquad \qquad  \qquad \qquad\qquad\forall D_1 \subseteq [d_j], \; D_2 \subseteq [d_k], \; l \in [d_i], \; D_1, D_2 \neq \emptyset\\
&\sum_{t \in D_1}{X_{tl}(i,j)} +\sum_{q\in D_2}{X_{lq}(j,k)}-\sum_{t \in D_1}{\sum_{q\in D_2} {X_{tq}(i,k)}}\leq 1, \nonumber \\
&\qquad \qquad \qquad \qquad \qquad\forall D_1 \subseteq [d_i], \; D_2 \subseteq [d_k], \; l \in [d_j], \; D_1, D_2 \neq \emptyset. \nonumber
\end{align}
\end{proposition}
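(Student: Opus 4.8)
The plan is to verify each inequality in \eqref{superg} directly at the points of the feasible region of \eqref{ip}, which are exactly the binary matrices satisfying the partial-map constraints and the triangle inequalities \eqref{gtri}. The argument rests on two elementary facts: the partial-map constraints force certain partial row and column sums of the blocks $X(\cdot,\cdot)$ to lie in $\{0,1\}$, while the base inequalities \eqref{gtri} propagate a single matched pair across the three objects $\S_i,\S_j,\S_k$. Because the three families in \eqref{superg} have parallel structure, it is enough to spell out the argument for the first family; the second and third follow by the same two-step scheme with the roles of the blocks permuted.

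For the first family, I would fix $1 \le i < j < k \le n$, nonempty $D_1 \subseteq [d_i]$ and $D_2 \subseteq [d_j]$, and $l \in [d_k]$, and abbreviate $A := \sum_{t \in D_1}\sum_{q \in D_2} X_{tq}(i,j)$, $B := \sum_{q \in D_2} X_{ql}(j,k)$, $C := \sum_{t \in D_1} X_{tl}(i,k)$, so that the left-hand side of the inequality equals $-A + B + C$. Then $A$ is a nonnegative integer; $B$ is a partial sum of the $l$-th column of $X(j,k)$, whose full column sum is at most one by a partial-map constraint of \eqref{ip}, so $B \in \{0,1\}$ since $X$ is binary; and likewise $C \in \{0,1\}$, using that the $l$-th column sum of $X(i,k)$ is at most one. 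The case $A \ge 1$ is then immediate, since $-A + B + C \le -1 + 1 + 1 = 1$, so the substantive case is $A = 0$, where the claim reduces to $B + C \le 1$.

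The key step is this claim. Suppose $A = 0$ but $B = C = 1$; then there are $q^\ast \in D_2$ with $X_{q^\ast l}(j,k) = 1$ and $t^\ast \in D_1$ with $X_{t^\ast l}(i,k) = 1$, whereas $X_{t^\ast q^\ast}(i,j) = 0$ because $(t^\ast,q^\ast) \in D_1 \times D_2$ and $A = 0$. The first inequality of \eqref{gtri}, applied with $t^\ast$ as the element of $\S_i$, $q^\ast$ as the element of $\S_j$, and $l$ as the element of $\S_k$, reads $-X_{t^\ast q^\ast}(i,j) + X_{q^\ast l}(j,k) + X_{t^\ast l}(i,k) \le 1$, i.e. $-0 + 1 + 1 \le 1$ --- a contradiction. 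Hence $B + C \le 1$ whenever $A = 0$, which finishes the first family.

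The remaining two families are handled identically. For the second, the two quantities pinned to $\{0,1\}$ are the partial $l$-th-row sums $\sum_{t \in D_1} X_{lt}(i,j)$ and $\sum_{q \in D_2} X_{lq}(i,k)$, the doubly-indexed term $\sum_{t\in D_1}\sum_{q\in D_2} X_{tq}(j,k)$ is the nonnegative integer, and if the latter vanishes while both row sums equal one, the resulting pair contradicts the vanishing via the second inequality of \eqref{gtri}; the third family is the same with partial column and row sums and the third inequality of \eqref{gtri}. I do not expect a genuine obstacle --- the proof is short --- but the one place to be careful is matching each of the three cases in \eqref{superg} with the correct inequality of \eqref{gtri} and the correct labeling of elements as lying in $\S_i$, $\S_j$ or $\S_k$, so that the coefficient signs produce an actual contradiction rather than a vacuous bound.
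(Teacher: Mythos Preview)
Your proposal is correct and follows essentially the same approach as the paper: both arguments use the partial-map constraints to bound each single-index sum by $1$, and then invoke the base triangle inequality~\eqref{gtri} to show that when both of those sums equal $1$, the double-index sum must contribute at least $1$. Your presentation is slightly more explicit (separating the case $A\ge 1$ and arguing the case $A=0$ by contradiction), whereas the paper argues the same critical case directly, but the underlying idea is identical.
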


\begin{proof}
Without loss of generality, consider the third inequality in~\eqref{superg}, for some $l \in [d_j]$.
To see the validity of this inequality, notice that by condition~\eqref{subD}, at most one term in $\sum_{t \in D_1} {X_{tl}(i,j)}$
and at most one term in $\sum_{q \in D_2}{X_{lq}(j,k)}$ equal one.
Now suppose that we have $X_{\hat t l}(i,j) = 1$
for some $\hat t \in D_1$ and $X_{l\hat q}(j,k) = 1$ for some $\hat q \in D_2$. Then by inequalities~\eqref{gtri}, we have $X_{\hat t \hat q}(i,k) = 1$ and this completes the proof of validity.
\end{proof}

Notice that by letting $|D_1| =|D_2|= 1$ in inequalities~\eqref{superg}, we obtain inequalities~\eqref{gtri}.
Henceforth, we refer to inequalities~\eqref{superg} as \emph{consistency inequalities}.
The following example demonstrates that consistency inequalities~\eqref{superg} strengthen the basic LP relaxation of Problem~\eqref{ip}.

\begin{example}\label{eg1}
Let $n =3$ and $d_1 = d_2 =d_3 =2$; it can be checked that the following satisfies inequalities~\eqref{gtri}:
\begin{equation}\label{ex1}
X(1,2) =
\begin{pmatrix}
\frac{1}{2} & \frac{1}{2} \\
0 & 0
\end{pmatrix}, \qquad
X(2,3) =
\begin{pmatrix}
0 & 0 \\
0 & 0
\end{pmatrix}, \qquad
X(1,3) =
\begin{pmatrix}
0 & \frac{1}{2} \\
0 & 0
\end{pmatrix}.
\end{equation}
Now consider the inequality obtained by letting $l=1$, $D_1 = \{1,2\}$, and $D_2 =\{2\}$ in the second inequality of system~\eqref{superg}:
$$
X_{11}(1,2)+X_{12}(1,2)-X_{12}(2,3)-X_{22}(2,3)+X_{12}(1,3) \leq 1.
$$
Substituting~\eqref{ex1} in the above inequality yields $\frac{1}{2} + \frac{1}{2}- 0  -0 + \frac{1}{2} \not\leq 1$.
Next consider
\begin{equation}\label{exx3}
X(1,2) =
\begin{pmatrix}
0 & 0 \\
0 & 0
\end{pmatrix}, \qquad
X(2,3) =
\begin{pmatrix}
0 & \frac{1}{3} \\
0 & \frac{1}{3}
\end{pmatrix}, \qquad
X(1,3) =
\begin{pmatrix}
0 & \frac{1}{3} \\
0 & \frac{1}{3}
\end{pmatrix}.
\end{equation}
It can be checked that~\eqref{exx3} satisfies all consistency inequalities with $|D_1| = 1$ or $|D_2| = 1$. Now consider the consistency inequality with
$l=2$, $|D_1| = |D_2| = 2$ given by:
$$
-X_{11}(1,2)-X_{12}(1,2)-X_{21}(1,2)-X_{22}(1,2)+X_{12}(2,3)+X_{22}(2,3)+X_{12}(1,3)+X_{22}(1,3) \leq 1.
$$
Substituting~\eqref{exx3} in the above inequality yields $-0-0-0-0+\frac{1}{3} + \frac{1}{3} + \frac{1}{3}+ \frac{1}{3} \not\leq 1$.
\end{example}

The number of consistency inequalities~\eqref{superg} is exponential in the number of elements $d_i$, $i \in [n]$.
That is, for each $1 \leq i < j < k \leq n$, we have $d_i (2^{d_j}+2^{d_k}-2)+d_j (2^{d_i}+2^{d_k}-2)+d_k (2^{d_i}+2^{d_j}-2)$ consistency inequalities. However, as we detail next, separating over these inequalities can be done in a number of operations that is polynomial in $n$ and $d_i$, $i \in [n]$.

\paragraph{Separation of consistency inequalities.}
Let us start by defining the separation problem:

\medskip

\emph{The separation problem.} Let $\tilde X(i,j) \in [0,1]^{d_i \times d_j}$ for all $1\leq i < j \leq n$ satisfy inequalities~\eqref{subD}. Decide whether $\tilde X$
satisfies all consistency inequalities~\eqref{superg} or not, and in the latter case find a consistency inequality that is violated by $\tilde X$.

\medskip

\begin{proposition}\label{p:separate}
There exists a strongly polynomial time algorithm that solves the separation problem over all consistency inequalities.
\end{proposition}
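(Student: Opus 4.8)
The plan is to reduce the separation problem for each of the three families in~\eqref{superg} to a collection of simple combinatorial subproblems, one for each choice of the ``pivot'' index and the triple $1 \leq i < j < k \leq n$, and then show that each such subproblem is solved by a trivial greedy rule. Fix one family, say the third one, and fix $i<j<k$ together with $l \in [d_j]$. The corresponding inequality reads
$$
\sum_{t \in D_1}{\tilde X_{tl}(i,j)} +\sum_{q\in D_2}{\tilde X_{lq}(j,k)}-\sum_{t \in D_1}{\sum_{q\in D_2} {\tilde X_{tq}(i,k)}}\leq 1,
$$
and we must decide whether there exist nonempty $D_1 \subseteq [d_i]$, $D_2 \subseteq [d_k]$ for which the left-hand side exceeds $1$; equivalently, we want to maximize the left-hand side over all nonempty $D_1, D_2$. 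Writing $a_t := \tilde X_{tl}(i,j)$, $b_q := \tilde X_{lq}(j,k)$, $c_{tq} := \tilde X_{tq}(i,k)$, this is the problem $\max_{D_1,D_2 \neq \emptyset} \big(\sum_{t\in D_1} a_t + \sum_{q\in D_2} b_q - \sum_{t\in D_1, q\in D_2} c_{tq}\big)$.

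The key observation is that this maximization decouples cheaply. For a fixed $D_2$, the objective is separable over $t$: index $t$ contributes $a_t - \sum_{q \in D_2} c_{tq}$ if included and $0$ otherwise, so the optimal $D_1$ (ignoring the nonemptiness constraint) is $\{t : a_t - \sum_{q\in D_2} c_{tq} > 0\}$, and symmetrically for $D_1$ fixed. Because the cross term $-\sum c_{tq}$ is nonpositive (entries of $\tilde X$ lie in $[0,1]$), adding an element to $D_1$ or $D_2$ can only help if its marginal gain is positive; this monotone/submodular structure means a natural alternating-greedy or, more simply, a ``guess the value $\sum_{q\in D_2} c_{tq}$ is small'' argument suffices. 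Concretely, I would argue that an optimal solution can be found by sorting and a linear scan: since the only coupling is through the $c$ matrix, one can enumerate over which single column $q^\ast$ (or, at worst, over the $O(d_k)$ possible ``thresholds'' induced by the values $c_{tq}$) is the last one added to $D_2$, fix $D_2$ accordingly, and then read off $D_1$ in $O(d_i d_k)$ time. Iterating this over all $i<j<k$, all pivots $l$, and all three symmetric families gives a total running time polynomial in $n$ and $\max_i d_i$ — and strongly polynomial, since every arithmetic operation is on a sum of at most $\bar d^2$ input entries and no division or rounding is needed. Finally, whenever the maximum exceeds $1$, the optimal $(D_1,D_2)$ yields an explicit violated inequality; if it is at most $1$ for every triple and pivot, then $\tilde X$ satisfies all of~\eqref{superg}.

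The main obstacle I anticipate is justifying rigorously that the inner maximization $\max_{D_1,D_2} \big(\sum a_t + \sum b_q - \sum c_{tq}\big)$ can be solved in polynomial time without brute-forcing over $2^{d_i}$ or $2^{d_k}$ subsets — i.e. pinning down exactly which structural property (submodularity of $D_2 \mapsto \max_{D_1}(\cdots)$, or an exchange argument showing the optimal $D_1$ is a ``downward-closed'' set once the columns are sorted by some key) makes the greedy/threshold enumeration correct. I would handle the nonemptiness requirement separately as a boundary case: compute the unconstrained optimum, and if it forces $D_1 = \emptyset$ or $D_2 = \emptyset$, compare against the best solution that fixes one element of the empty side by hand, which costs only an extra $O(d_i d_k)$ factor. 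A careful statement and proof of the threshold lemma is where the real work lies; everything else is bookkeeping over the $O(n^3 \max_i d_i)$ (pivot, triple) pairs.
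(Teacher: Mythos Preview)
Your reduction is the same as the paper's: fix a triple $i<j<k$ and a pivot $l$, and reduce separation to maximizing
\[
\Phi(D_1,D_2)\;=\;\sum_{t\in D_1} a_t + \sum_{q\in D_2} b_q - \sum_{t\in D_1}\sum_{q\in D_2} c_{tq}
\]
over subsets, with $a,b,c\geq 0$. Where your proposal has a genuine gap is precisely the step you flag yourself: you do not actually establish that this inner maximization is solvable in strongly polynomial time. The two concrete mechanisms you sketch --- alternating greedy and ``enumerate the last column added to $D_2$'' --- are not justified and, as stated, do not work: coordinate-wise best response on a non-concave pseudo-Boolean function only guarantees a local optimum, and there is no natural total order on $[d_k]$ that would make a single-threshold enumeration exhaustive. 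Your observation that the optimal $D_1$ for fixed $D_2$ is explicit is correct but does not by itself yield a polynomial algorithm over $D_2$.

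The paper closes this gap with a one-line trick that you were circling when you mentioned submodularity. Encode $D_1,D_2$ by binary vectors $y\in\{0,1\}^{d_i}$, $z\in\{0,1\}^{d_k}$ and substitute $\bar z_q = 1-z_q$. The objective becomes
\[
\sum_t\Big(a_t-\sum_q c_{tq}\Big)y_t + \sum_q b_q(1-\bar z_q) + \sum_{t,q} c_{tq}\,y_t\bar z_q,
\]
whose bilinear coefficients $c_{tq}$ are all nonnegative; hence the function is supermodular in $(y,\bar z)$, and supermodular maximization (equivalently submodular minimization, or a min-cut reformulation) is strongly polynomial. That single observation replaces your threshold lemma entirely. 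The nonemptiness of $D_1,D_2$ is a non-issue: if the unconstrained maximizer has $D_1=\emptyset$ or $D_2=\emptyset$ then the left-hand side is at most $\sum_t a_t\leq 1$ or $\sum_q b_q\leq 1$ by~\eqref{subD}, so no violation exists. Iterating over all $3\,\tilde d\binom{n}{3}$ choices of triple and pivot gives the claimed strongly polynomial separation.
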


\begin{proof}
Let $1 \leq i < j < k \leq n$ and let $l \in [d_j]$; consider the third inequality in~\eqref{superg} for all nonempty $D_1 \subseteq [d_i]$, $D_2 \subseteq [d_k]$. We claim that the inequality that is most violated by $\tilde X(i,j) \in [0,1]^{d_i \times d_j}$ for all $1\leq i < j \leq n$ (if one exists) among all such $2^{d_i}+2^{d_k}-2$ inequalities can be found by an algorithm that is strongly polynomial in $d_i,d_k$. This in turn implies that the separation problem over all consistency inequalities can be solved in strongly polynomial time with respect to $n, d_i$, $i \in [n]$.
Define binary variables $y_t, z_q$ for all $t \in [d_i], q \in [d_k]$ as follows: for each $t \in [d_i]$, we let $y_t = 1$ if $t \in D_1$ and $y_t = 0$, otherwise; similarly, for each $q \in [d_k]$, we let $z_q = 1$ if $q \in D_2$ and $z_q = 0$, otherwise.
Then to find the most violated consistency inequality, it suffices to solve the following optimization problem:
\begin{align}\label{auxSep}
\max \quad & \sum_{t \in [d_i]}{\tilde X_{tl}(i,j) y_t}+\sum_{q \in [d_k]}{\tilde X_{lq}(j,k) z_q}-\sum_{t \in [d_i]}{\sum_{q \in [d_k]}{\tilde X_{tq}(i,k) y_t z_q}}  \\
\st \quad  &y \in \{0,1\}^{d_i}, \; z \in \{0,1\}^{d_k} \nonumber.
\end{align}
If the optimal value of the above problem is greater than one, the maximizer provides us with $D_1, D_2$ corresponding to a most violated inequality; otherwise, no violated
consistency inequality exists.  Now let us examine the complexity of solving Problem~\eqref{auxSep}. Denote by $f(y,z)$ the objective function of Problem~\eqref{auxSep}.
Defining $\bar z_q  = 1-z_q$ for all $q \in [d_k]$, we get
\begin{align*}
f(y, \bar z)  &=  \sum_{t \in [d_i]}{\tilde X_{tl}(i,j) y_t}+\sum_{q \in [d_k]}{\tilde X_{lq}(j,k) (1- \bar z_q)}-\sum_{t \in [d_i]}{\sum_{q \in [d_k]}{\tilde X_{tq}(i,k) y_t (1-\bar z_q)}}\\
& =  \sum_{t \in [d_i]}{\big(\tilde X_{tl}(i,j)-\sum_{q\in [d_k]}{\tilde X_{tq}(i,k)} \big)y_t}+\sum_{q \in [d_k]}{\tilde X_{lq}(j,k) (1- \bar z_q)}+\sum_{t \in [d_i]}{\sum_{q \in [d_k]}{\tilde X_{tq}(i,k) y_t \bar z_q}}
\end{align*}
Since by assumption $\tilde X_{tq}(i,k) \geq 0$ for all $t \in [d_i], q \in [d_k]$ and for all $1 \leq i < k \leq n$, it can be checked that $f(y, \bar z)$, $y \in \{0,1\}^{d_i}, \bar z \in \{0,1\}^{d_k}$ is a super-modular function (see for example~\cite{HamHanSim84}).
It then follows that Problem~\eqref{auxSep} can be transformed into the problem of maximizing a super-modular function and hence can be solved in strongly polynomial time in $d_i+d_k$~\cite{Sch00}. Hence, solving the separation problem over all consistency inequalities amounts to (in the worst case) solving  $3 \tilde d \binom{n}{3}$ optimization problems of the form~\eqref{auxSep}, where $\tilde d = \max_{i \in [n]} d_i$.
\end{proof}

\begin{remark}
It can be shown that Problem~\eqref{auxSep}, \ie the problem of maximizing a super-modular quadratic function, can be equivalently solved by solving the following LP~\cite{BorHam02}:
\begin{align}\label{separate}
\max \quad &  \sum_{t \in [d_i]}{\tilde X_{tl}(i,j) y_t}
+\sum_{q \in [d_k]}{\tilde X_{lq}(j,k) z_q} -\sum_{t\in [d_i]}\sum_{q\in [d_k]}{\tilde X_{tq}(i,k) w_{tq}}\\
\st \quad  & w_{tq} \geq y_t + z_q -1, \; w_{tq} \geq 0, \quad \forall t \in [d_i], q \in [d_k]\nonumber\\
           & y \in [0,1]^{d_i}, \; z \in [0,1]^{d_k}, \nonumber
\end{align}
which can be readily solved using a generic LP solver. Note that the above LP has $d_i d_k+d_i+d_k$ variables; \ie $y_t, z_q, w_{tq}$ for all $t \in [d_i], q \in [d_k]$.
As we detail in Section~\ref{sec:numerics}, to solve the separation problem over consistency inequalities, we solve Problem~\eqref{separate}.
\end{remark}

Notice that in order to construct Problem~\eqref{auxSep} we do not require $D_1$ and $D_2$ to be nonempty, since if at least one of the two subsets is empty, the resulting inequality is implied by inequalities~\eqref{subD} and hence is trivially satisfied.

\paragraph{Block consistency inequalities.}
Consistency inequalities~\eqref{superg} can be significantly generalized as follows:

\begin{proposition}
Let $1 \leq i < j <k \leq n$.
Then following inequalities are valid for the feasible region of Problem~\eqref{ip}:
\begin{align}\label{duperg}
-&\sum_{t \in D_1}{\sum_{q \in D_2}{X_{tq}(i,j)}}+\sum_{q \in D_2}{\sum_{l \in D_3}{X_{ql}(j,k)}} +\sum_{t \in D_1}{\sum_{l \in D_3}{X_{tl}(i,k)}}\leq |D_3|,\nonumber\\
& \qquad \qquad \qquad \forall D_1 \subseteq [d_i], \; D_2 \subseteq [d_j], \; D_3 \subseteq [d_k], D_1, D_2, D_3 \neq \emptyset.\nonumber\\
&\sum_{l \in D_3}{\sum_{t \in D_1}{X_{lt}(i,j)}} -\sum_{t \in D_1}{\sum_{q\in D_2}{X_{tq}(j,k)}}+\sum_{l \in D_3}{\sum_{q\in D_2} {X_{lq}(i,k)}}\leq |D_3|,\nonumber\\
& \qquad \qquad \qquad\forall D_1 \subseteq [d_j], \; D_2 \subseteq [d_k], \; D_3 \subseteq [d_i], D_1, D_2, D_3 \neq \emptyset\\
&\sum_{t \in D_1}{\sum_{l \in D_3}{X_{tl}(i,j)}} +\sum_{l \in D_3}{\sum_{q\in D_2}{X_{lq}(j,k)}}-\sum_{t \in D_1}{\sum_{q\in D_2} {X_{tq}(i,k)}}\leq |D_3|,\nonumber\\
& \qquad \qquad \qquad\forall D_1 \subseteq [d_i], \; D_2 \subseteq [d_k], \; D_3 \subseteq [d_j], D_1, D_2, D_3 \neq \emptyset.\nonumber
\end{align}
\end{proposition}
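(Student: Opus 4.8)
The plan is to argue directly from the combinatorial meaning of a feasible point of \eqref{ip}, exactly as in the proof of Proposition~\ref{validity}: every such point is a $0$-$1$ matrix, so the matrices $X(i,j)$, $X(j,k)$, $X(i,k)$ may be read as partial matchings between the element sets of the three objects. By the symmetry among the three objects it suffices to establish the third family of inequalities in \eqref{duperg}; the other two are proved in the same way after replacing the relevant triangle inequality from \eqref{consis}. Fix $1 \le i < j < k \le n$, nonempty subsets $D_1 \subseteq [d_i]$, $D_2 \subseteq [d_k]$, $D_3 \subseteq [d_j]$, and a feasible point $X$ of \eqref{ip}. Then the left-hand side of the third inequality in \eqref{duperg} equals $P + Q - R$, where $P$ is the number of matched pairs of $X(i,j)$ inside $D_1 \times D_3$, $Q$ is the number of matched pairs of $X(j,k)$ inside $D_3 \times D_2$, and $R$ is the number of matched pairs of $X(i,k)$ inside $D_1 \times D_2$; so the claim is $P + Q - R \le |D_3|$.

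Next I would classify the elements of $D_3$ according to the matched edges incident to them. Because $X(i,j)$ and $X(j,k)$ satisfy \eqref{subD}, each $l \in D_3$ is matched in $X(i,j)$ to at most one element of $[d_i]$ and in $X(j,k)$ to at most one element of $[d_k]$; in particular at most one $X(i,j)$-edge at $l$ lands in $D_1$ and at most one $X(j,k)$-edge at $l$ lands in $D_2$. Let $a$, $b$, $c$ denote the numbers of $l \in D_3$ that have, respectively, both of these edges, only the $D_1$-edge, and only the $D_2$-edge. Then $P = a + b$ and $Q = a + c$, and since the three index sets just described are pairwise disjoint subsets of $D_3$, we get $a + b + c \le |D_3|$.

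It remains to show $R \ge a$. For each of the $a$ indices $l$ of the first type, let $t(l) \in D_1$ and $q(l) \in D_2$ be its two matched partners; the triangle inequality \eqref{gtri} (equivalently, cycle consistency \eqref{consis}) applied to $X_{t(l),l}(i,j) = 1$ and $X_{l,q(l)}(j,k) = 1$ forces $X_{t(l),q(l)}(i,k) = 1$, so the pair $(t(l), q(l)) \in D_1 \times D_2$ is counted by $R$. Since $X(i,j)$ is a partial map, no element of $[d_i]$ can be matched to two distinct elements of $D_3$, so $l \mapsto t(l)$ is injective on these $a$ indices; hence the pairs $(t(l), q(l))$ are pairwise distinct and $R \ge a$. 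Combining the three facts, $P + Q - R \le (a+b) + (a+c) - a = a + b + c \le |D_3|$, as required; taking $|D_3| = 1$ recovers Proposition~\ref{validity}. I do not anticipate any real obstacle here, since the argument is a short counting argument; the only thing to be careful about is the bookkeeping when transferring it to the first two families of \eqref{duperg}: in each case one must select the matching triangle inequality among the three in \eqref{consis} to produce the ``missing'' edge and verify the corresponding injectivity statement.
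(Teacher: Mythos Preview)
Your proof is correct and follows essentially the same approach as the paper's: both fix the third family of inequalities, use the partial-map constraints \eqref{subD} to bound the per-$l$ contribution to the two positive sums by one each, invoke the triangle inequality \eqref{gtri} to produce a matched pair in $D_1\times D_2$ whenever $l$ contributes to both, and use injectivity (from \eqref{subD} again) to ensure distinct $l$'s yield distinct such pairs. Your explicit counting notation $P,Q,R,a,b,c$ makes the bookkeeping cleaner than the paper's more informal ``recursive application'' phrasing, but the argument is the same.
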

\begin{proof}
Without loss of generality, let us consider the third inequality and let us rewrite this inequality as
$$
\sum_{l \in D_3}{\Big(\sum_{t \in D_1}{X_{tl}(i,j)}+\sum_{q\in D_2}{X_{lq}(j,k)}\Big)}-\sum_{t \in D_1}{\sum_{q\in D_2} {X_{tq}(i,k)}}\leq |D_3|.
$$
By inequalities~\eqref{subD} for each $l \in D_3$ we have  $\sum_{t \in D_1}{X_{tl}(i,j)} \leq 1$ and $\sum_{q\in D_2}{X_{lq}(j,k)} \leq 1$.
By proof of Proposition~\ref{validity}, for each $l \in D_3$, if $X_{\hat t l}(i,j) = X_{l \hat q}(j,k) = 1$, for some $\hat t \in D_1$
and $\hat q \in D_2$, then  $X_{\hat t \hat q}(i,k) = 1$. Now suppose that there exists $l_1 \neq l_2 \in D_3$ for which this condition is satisfied; \ie
$X_{\hat t l_1}(i,j) = X_{l_1 \hat q}(j,k) = 1$ and $X_{\tilde t l_2}(i,j) = X_{l_2 \tilde q}(j,k) = 1$. Then by inequalities~\eqref{subD} it follows that
$\hat t \neq \tilde t$ and $\hat q \neq \tilde q$ implying that by inequalities~\eqref{gtri}, two distinct terms $X_{\hat t \hat q}(i,k)$ and $X_{\tilde t \tilde q}(i,k)$ in the third summation equal one as well.
A recursive application of this argument completes the proof of validity.
\end{proof}
Notice that by letting $|D_3| = 1$ in inequalities~\eqref{duperg}, we obtain consistency inequalities~\eqref{superg}.
Henceforth, we refer to inequalities~\eqref{duperg} as \emph{block consistency inequalities}.
If $|D_1| + |D_2| \leq |D_3|$, then by~\eqref{subD}, block consistency inequalities are redundant as the value of the left-hand side of these inequalities does not exceed
$|D_1|+|D_2|$. However, in general, block consistency inequalities~\eqref{duperg} with $|D_3| > 1$ are not implied by the consistency inequalities~\eqref{superg}. The following example illustrates this fact:

\begin{example}
Let $n =3$ and $d_1 = d_2 = d_3 =2$; it can be checked that the following satisfies inequalities~\eqref{superg}:
\begin{equation}\label{ex7}
X(1,2) =
\begin{pmatrix}
0 & \frac{1}{2} \\
0 & 0
\end{pmatrix}, \qquad
X(2,3) =
\begin{pmatrix}
\frac{1}{2} & \frac{1}{2} \\
\frac{1}{2} & \frac{1}{2}
\end{pmatrix}, \qquad
X(1,3) =
\begin{pmatrix}
\frac{1}{2} & \frac{1}{2} \\
0 & 0
\end{pmatrix}.
\end{equation}
Now consider the inequality obtained by letting $D_1=\{1\}$, $D_2=D_3 = \{1,2\}$ in the first inequality of system~\eqref{duperg}:
$$
-X_{11}(1,2)-X_{12}(1,2)+X_{11}(2,3)+X_{12}(2,3)+X_{21}(2,3)+X_{22}(2,3)+X_{11}(1,3)+X_{12}(1,3) \leq 2.
$$
Substituting~\eqref{ex7} in the above inequality yields $0-\frac{1}{2} +\frac{4}{2} + \frac{2}{2} \not\leq 2$.
\end{example}

We will further detail on the theoretical strength of block consistency inequalities in the next section.
From a computational perspective however, unlike consistency inequalities~\eqref{superg}, the separation problem for this more general class cannot be reduced to the problem of
maximizing a binary supermodular quadratic function. Indeed, there does not seem to be a reduction to any known class of polynomially-solvable binary quadratic program~\cite{Bar86,Pad89,BorHam02,michini21}. While we leave the complexity of this separation problem as an open question, we suspect that it is a difficult problem.
Moreover, inequalities~\eqref{duperg} are very dense; that is they contain many variables with nonzero coefficients; a feature that is not computationally desirable.
Hence, in this paper, we do not consider block consistency inequalities for our computational experiments.

\subsection{The Joint Matching Polytope}
In this section, we establish the strength of the proposed inequalities by performing a brief polyhedral study of the convex hull of the feasible region of Problem~\eqref{ip}.
For simplicity, throughout this section we assume that all objects have the same number of elements $d$. We assume that $n \geq 3$ and $d \geq 2$.
Now consider the feasible region of Problem~\eqref{ip}, that is, the union of all consistent partial maps corresponding to $n$ objects each
of which consists of $d$ elements.
Any consistent partial map $X(i,j)$, $1 \leq i < j \leq n$, can be equivalently represented by a binary vector $\bar X $ in $\R^{\binom{n}{2} d^2}$ defined as
\begin{equation}\label{map2vec}
\bar X = ({\rm vec}(X(1,1))^T, {\rm vec}(X(1,2))^T, \cdots, {\rm vec}(X(1,n))^T, {\rm vec}(X(2,1))^T \cdots, {\rm vec}(X(n,n))^T )^T,
\end{equation}
where ${\rm vec}(X(i,j))$ denotes the vectorization of matrix $X(i,j)$. We refer to the convex hull of such binary vectors corresponding to all consistent partial maps as the \emph{joint matching polytope} and denote it by $\C_{n,d}$.
In the following, we refer to a vertex of $\C_{n,d}$ by specifying the corresponding partial maps, with the understanding that transformation~\eqref{map2vec}
is applied to construct a vertex from the partial maps. For notational simplicity, we sometimes use a set theoretic notation to specify the partial maps.
Namely, for any $q \in [d]$ and $i \in [n]$, we denote by $i_q$ the $q$-th element of object $\S_i$. The set $\M_X$ then represents the consistent partial maps
$X(i,j)$, $1 \leq i < j \leq n$, as follows:
$(\hat i_t, \hat j_q, \hat k_l) \in \M_X$ if and only if $\hat i_t$, $\hat j_q$ and $\hat k_l$ correspond to an element $u$ in the universe $\U$, \ie
$X_{tq}(\hat i, \hat j)=X_{ql}(\hat j, \hat k)=X_{tl}(\hat i,\hat k)=1$. Moreover, no other element of $\cup_{i \in [n]}\S_i$, corresponds to $u$.
We also assume that $\hat i_t \neq  \hat j_q \neq \hat k_l$, where we define $\hat i_t =  \hat j_q$ when $\hat i = \hat j$ and $t =q$.

With the objective of constructing strong LP relaxations for joint object matching, in the following, we conduct a brief polyhedral study of the joint matching polytope. Namely, we obtain several classes of facet-defining inequalities for this polytope.
To this end, we first establish a fundamental property of the joint matching polytope.

\begin{proposition}\label{fullDim}
The joint matching polytope $\C_{n,d}$, $n \geq 3$, $d \geq 2$ is full dimensional, \ie $\dim(\C_{n,d}) = \binom{n}{2} d^2$.
\end{proposition}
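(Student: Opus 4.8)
The plan is to exhibit $\binom{n}{2}d^2 + 1$ affinely independent vertices of $\C_{n,d}$, or equivalently, to show that the only linear equation $\langle a, \bar X\rangle = b$ satisfied by every vertex is the trivial one $a = 0$, $b = 0$. The second route is usually cleaner here: I would take an arbitrary linear functional $a$ (indexed by triples $(i,j)$ with $1 \le i < j \le n$ and pairs $t, q \in [d]$, matching the coordinates $X_{tq}(i,j)$) together with a scalar $b$, assume $\langle a, \bar X \rangle = b$ holds on all consistent partial maps, and deduce $a \equiv 0$ and $b = 0$ by plugging in carefully chosen consistent maps and differencing.

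The key observation making this easy is that the \emph{empty partial map} (all $X(i,j) = 0$) is consistent, so $b = 0$ immediately. To kill each coefficient $a^{(i,j)}_{tq}$ individually, I would use the ``singleton'' consistent map that pairs element $i_t$ with element $j_q$ and leaves everything else unpaired: that is, $X_{tq}(i,j) = 1$ and all other entries of all blocks zero. This is trivially cycle-consistent because there is no third object sharing a universe element with the pair $\{i_t, j_q\}$ (every other object contributes no element to that universe point); concretely it satisfies inequalities~\eqref{subD} and the triangle system~\eqref{gtri} since at most one of the three relevant variables in any triple is nonzero. Evaluating the equation on this map gives $a^{(i,j)}_{tq} = b = 0$. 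Ranging over all $1 \le i < j \le n$ and all $t, q \in [d]$ exhausts every coordinate, so $a \equiv 0$, proving full dimensionality.

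I would double-check the one subtlety: the set-theoretic description $\M_X$ in the excerpt insists $\hat i_t \neq \hat j_q$, but for a singleton map between two \emph{distinct} objects $i \neq j$ this is automatic, and since we always have $i < j$ in the coordinate indexing, distinctness of objects is guaranteed — so the singleton maps are genuinely feasible vertices of $\C_{n,d}$. (Were $d = 1$ one would need to be more careful, but the hypothesis $d \ge 2$, and indeed even $n \ge 2$, is more than enough; in fact the argument only uses that each coordinate can be ``switched on in isolation.'') The main ``obstacle'' is really just bookkeeping: being explicit that the chosen $0/1$ matrices form a consistent collection, i.e. verifying~\eqref{subD} and~\eqref{gtri} — or equivalently pointing to~\eqref{consis} — which is immediate since every product appearing there is a product of matrices with a single $1$ placed so that no two of them compose nontrivially. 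No deeper structural fact about $\C_{n,d}$ is needed for this particular statement.
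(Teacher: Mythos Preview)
Your proposal is correct and essentially identical to the paper's proof: both use the zero map together with the $\binom{n}{2}d^2$ singleton maps $\M_X=\{(i_t,j_q)\}$. The only cosmetic difference is that the paper directly asserts these $\binom{n}{2}d^2+1$ points are affinely independent, while you phrase the same fact dually by killing the coefficients of an arbitrary supporting hyperplane.
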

\begin{proof}
To prove the statement, it suffices to show that $\C_{n,d}$ contains $\binom{n}{2} d^2 + 1$ affinely independent points.
Let the first point correspond to partial maps with no paired elements, \ie $X(i,j) = 0$ for all $1 \leq i < j \leq n$.
Next for each $\hat i, \hat j$ with $1\leq \hat i < \hat j \leq n$, and each $\hat t, \hat q \in [d]$ consider partial maps with $\M_X = \{(\hat i_{\hat t}, \hat j_{\hat q})\}$, \ie $X_{\hat t \hat q} (\hat i, \hat j) =1$ and $X_{tq}(i,j) = 0$, otherwise.
Clearly, we have a total number of $\binom{n}{2} d^2$ such partial maps. It then follows that together with the zero vertex, the corresponding vertices constitute a collection of $\binom{n}{2} d^2 + 1$ affinely independent points in $\C_{n,d}$.
\end{proof}

\begin{proposition}\label{facet1}
The nonnegativity constraints $X_{tq}(i,j) \geq 0$ for all $t,q \in [d]$ and for all $1 \leq i < j \leq n$, are facet-defining for $\C_{n,d}$.
\end{proposition}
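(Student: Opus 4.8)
The plan is to show that the face $F$ of $\C_{n,d}$ defined by a fixed nonnegativity constraint $X_{\hat t\hat q}(\hat i,\hat j)\ge 0$ has dimension $\binom{n}{2}d^2-1$, which (since $\C_{n,d}$ is full dimensional by Proposition~\ref{fullDim}) is equivalent to saying $F$ is a facet. Concretely, $F$ consists of all consistent partial maps with $X_{\hat t\hat q}(\hat i,\hat j)=0$; it suffices to exhibit $\binom{n}{2}d^2$ affinely independent vertices of $\C_{n,d}$ all lying in $F$.

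First I would reuse almost verbatim the collection of vertices from the proof of Proposition~\ref{fullDim}: the zero map $X\equiv 0$, together with, for every $(i,j)$ with $1\le i<j\le n$ and every $t,q\in[d]$ with $(i,j,t,q)\ne(\hat i,\hat j,\hat t,\hat q)$, the ``single pair'' vertex $\M_X=\{(i_t,j_q)\}$. That is $1+\big(\binom{n}{2}d^2-1\big)=\binom{n}{2}d^2$ vertices, and each of them satisfies $X_{\hat t\hat q}(\hat i,\hat j)=0$, so they all lie in $F$. The affine independence argument is the same one used in Proposition~\ref{fullDim}: subtracting the zero vertex, the remaining vectors are distinct standard basis vectors of $\R^{\binom{n}{2}d^2}$ (restricted to the coordinates other than the one we dropped), hence linearly independent, so the whole set is affinely independent. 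I should double-check that each single-pair assignment is genuinely a \emph{consistent} partial map: a single paired element $(i_t,j_q)$ trivially satisfies~\eqref{subD} and there is no triple $l,t,q$ with two of $X_{lt}(i,j),X_{tq}(j,k),X_{lq}(i,k)$ equal to one, so~\eqref{gtri} holds vacuously — this is already implicit in the proof of Proposition~\ref{fullDim}.

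Finally I would note that $F$ is a proper face: the inequality $X_{\hat t\hat q}(\hat i,\hat j)\ge 0$ is not satisfied with equality by, e.g., the single-pair vertex $\M_X=\{(\hat i_{\hat t},\hat j_{\hat q})\}$, so $F\ne\C_{n,d}$, and combined with $\dim F\ge\binom{n}{2}d^2-1$ and $\dim\C_{n,d}=\binom{n}{2}d^2$ we get $\dim F=\binom{n}{2}d^2-1$ exactly, i.e. $F$ is a facet. There is essentially no obstacle here: the only thing to be slightly careful about is the bookkeeping — confirming that removing precisely the one coordinate $(\hat i,\hat j,\hat t,\hat q)$ from the Proposition~\ref{fullDim} construction leaves exactly the right number of affinely independent points inside $F$, and that none of the retained vertices accidentally violates the nonnegativity being studied (they do not, since all their coordinates are $0$ or $1$). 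So this proof is a short and direct adaptation of the full-dimensionality argument rather than anything requiring new ideas.
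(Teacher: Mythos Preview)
Your argument is correct. You prove the facet by exhibiting $\binom{n}{2}d^2$ affinely independent vertices on the face $F=\{X\in\C_{n,d}:X_{\hat t\hat q}(\hat i,\hat j)=0\}$, namely the zero map together with all single-pair maps except the one indexed by $(\hat i,\hat j,\hat t,\hat q)$, and then observe $F$ is proper. The paper instead uses the equivalent ``dual'' characterization: it assumes a nontrivial valid inequality $\alpha X\le\beta$ is tight on every consistent partial map with $X_{11}(1,2)=0$, substitutes the same family of vertices you use into $\alpha X=\beta$ to deduce $\beta=0$ and $\alpha_{tq}(i,j)=0$ for all $(i,j,t,q)\neq(1,2,1,1)$, and concludes the inequality is a positive multiple of $-X_{11}(1,2)\le 0$. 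The two methods are logically interchangeable here and rely on exactly the same set of points; your direct affine-independence count is slightly more elementary for this particular proposition, while the paper's approach has the advantage of being the same template it reuses for the more intricate facet proofs in Propositions~\ref{facet2}--\ref{newfacet}, where listing affinely independent points explicitly would be more cumbersome.
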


\begin{proof}
Without loss of generality, we show that inequality $X_{11}(1,2) \geq 0$ defines a facet of $\C_{n,d}$.
Consider a nontrivial valid inequality $\sum_{1\leq i < j \leq n}{\sum_{t,q \in [d]}{\alpha_{tq}(i,j) X_{tq}(i,j)}} \leq \beta$ for $\C_{n,d}$ that is satisfied tightly by all consistent partial maps  $X(i,j)$, $1\leq i < j \leq n$
in which $X_{11}(1,2) = 0$. In the following, we show that the two inequalities $X_{11}(1,2) \geq 0$  and $\alpha X \leq \beta$ coincide up to a positive scaling which by full dimensionality of $\C_{n,d}$ (see Proposition~\ref{fullDim}) implies $X_{11}(1,2) \geq 0$ is facet-defining.

First consider the partial maps corresponding to no matched pairs, \ie $X(i,j) = 0$ for all $1\leq i < j \leq n$. Substituting this point in $\alpha X = \beta$ yields
$\beta = 0$. Next consider some $\hat t, \hat q \in [d]$ and $1\leq \hat i < \hat j \leq n$ such that $(\hat i, \hat j, \hat t,\hat q) \neq (1,2,1,1)$ and consider the partial maps
with $\M_X = \{(\hat i_{\hat t}, \hat j_{\hat q})\}$; substituting this point in  $\alpha X = \beta$ gives $\alpha_{\hat t \hat q}(\hat i,\hat j) = 0$. It then follows that $\alpha_{tq}(i,j) = 0$ for all $t,q \in [d]$ and, for all $1 \leq i < j \leq n$ such that $(i,j,t,q) \neq (1,2,1,1)$. Hence, inequality $\alpha X \leq \beta$ can be written as
$\alpha_{11}(1,2) X_{11} (1,2) \leq 0$. Since this inequality is valid for $\C_{n,d}$, we have $\alpha_{11}(1,2)  < 0$ and this completes the proof.
\end{proof}

\begin{proposition}\label{facet2}
Inequalities~\eqref{subD} define facets of the joint matching polytope.
\end{proposition}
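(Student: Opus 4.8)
The plan is to follow the same template used in the proof of Proposition~\ref{facet1}, since both are of the form ``a single simple inequality from the original description is facet-defining.'' The inequalities in \eqref{subD} are of two symmetric types: the row-sum constraints $\sum_{q \in [d]} X_{tq}(i,j) \le 1$ and the column-sum constraints $\sum_{t \in [d]} X_{tq}(i,j) \le 1$. By the obvious symmetry (transposing the roles of $i$ and $j$), it suffices to treat one representative, say the row constraint $\sum_{q \in [d]} X_{1q}(1,2) \le 1$. Validity is already established (it is part of the definition of a partial map), and $\C_{n,d}$ is full dimensional by Proposition~\ref{fullDim}, so I only need to show that any valid inequality $\alpha X \le \beta$ tight on every vertex of $\C_{n,d}$ satisfying $\sum_{q \in [d]} X_{1q}(1,2) = 1$ must be a positive multiple of the row constraint.

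The key steps, in order: First, exhibit enough tight vertices to pin down all but a few coefficients. Start with the vertex given by the single pair $\M_X = \{(1_1, 2_1)\}$ (so $X_{11}(1,2) = 1$, all else zero); this is a consistent partial map with $\sum_{q} X_{1q}(1,2) = 1$, so substituting gives $\alpha_{11}(1,2) = \beta$. Next, for any pair $(\hat i_{\hat t}, \hat j_{\hat q})$ with $(\hat i,\hat j,\hat t,\hat q) \neq (1,2,1,q)$ for all $q$, consider the two-pair vertex $\M_X = \{(1_1, 2_1), (\hat i_{\hat t}, \hat j_{\hat q})\}$ — this is still consistent provided the two matched pairs involve disjoint elements, which holds since $\hat t \ne 1$ or $(\hat i,\hat j) \ne (1,2)$ keeps them from colliding in object~1's first element (and one must also check they don't collide elsewhere; if $(\hat i,\hat j) = (1,2)$ then $\hat t \ne 1$ and $\hat q$ arbitrary still gives a legal partial map on objects $1,2$). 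This vertex is still tight, so comparing with the single-pair vertex $\M_X = \{(1_1,2_1)\}$ forces $\alpha_{\hat t \hat q}(\hat i, \hat j) = 0$ for all such indices. This zeroes out every coefficient except $\alpha_{1q}(1,2)$ for $q \in [d]$.

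Second, I need to show the surviving coefficients $\alpha_{1q}(1,2)$, $q \in [d]$, are all equal. For each $q \in [d]$ the single-pair vertex $\M_X = \{(1_1, 2_q)\}$ satisfies $\sum_{q'} X_{1q'}(1,2) = 1$, hence is tight, giving $\alpha_{1q}(1,2) = \beta$ for every $q$. Combined with $\alpha_{11}(1,2) = \beta$, this shows $\alpha_{1q}(1,2) = \beta$ for all $q \in [d]$, so $\alpha X \le \beta$ reads $\beta \sum_{q\in[d]} X_{1q}(1,2) \le \beta$. Finally, validity of this inequality over $\C_{n,d}$, together with the existence of a vertex where $\sum_{q} X_{1q}(1,2) = 0$ (the all-zero map) and one where it equals $1$, forces $\beta > 0$; dividing through shows $\alpha X \le \beta$ is exactly the row constraint scaled by $\beta$. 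By full dimensionality this proves it is facet-defining, and the column-constraint case follows by the $i \leftrightarrow j$ transposition symmetry.

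The main obstacle I anticipate is the bookkeeping in the second step: one must be careful that each two-pair set $\M_X = \{(1_1,2_1), (\hat i_{\hat t}, \hat j_{\hat q})\}$ genuinely corresponds to a \emph{consistent} partial map — i.e., the two pairs do not force any third entry to be $1$ via cycle consistency and do not violate \eqref{subD} in any object. When $(\hat i, \hat j)$ shares an object with $\{1,2\}$ this requires checking the shared elements differ; when $(\hat i,\hat j)$ is disjoint from $\{1,2\}$ it is immediate. A clean way to handle this uniformly is to note that any set of pairwise element-disjoint matches trivially satisfies both \eqref{subD} and cycle consistency vacuously (no length-two composition arises), so one just verifies element-disjointness, which is exactly the condition $(\hat i,\hat j,\hat t,\hat q) \neq (1,2,1,\cdot)$ handles in the relevant cases. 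This is routine but must be stated carefully to make the coefficient-elimination watertight.
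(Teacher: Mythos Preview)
Your overall strategy matches the paper's, but there is a genuine gap in the coefficient-elimination step. You assert that the condition $(\hat i,\hat j,\hat t,\hat q) \neq (1,2,1,q)$ for all $q$ is exactly what guarantees element-disjointness of $\{(1_1,2_1),(\hat i_{\hat t},\hat j_{\hat q})\}$; this is false. Two families of coefficients escape your argument. First, for $\alpha_{1q}(1,j)$ with $j\geq 3$, the second pair $(1_1,j_q)$ shares the element $1_1$ with \emph{any} anchor of the form $(1_1,2_{\hat q})$, so no two-pair element-disjoint tight vertex exists for these coefficients. The paper resolves this by using the \emph{triple} match $\M_X=\{(1_1,2_{\hat t},j_{\hat q})\}$, which is tight and (after the other zeros are in place) forces $\alpha_{1q}(1,j)=0$. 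Second, with the anchor fixed at $(1_1,2_1)$ you also collide at $2_1$ whenever the second pair involves $2_1$, e.g.\ $(1_t,2_1)$ with $t\neq 1$ or $(2_1,j_q)$ with $j\geq 3$; this is easily repaired by varying the anchor over all $(1_1,2_{\hat q})$, $\hat q\in[d]$, as the paper does, since $d\geq 2$ lets you pick $\hat q$ avoiding the one forbidden value.

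In short: move your Step~2 (the single-pair vertices $\{(1_1,2_q)\}$ giving $\alpha_{1q}(1,2)=\beta$ for all $q$) before the two-pair step so you can vary the anchor, and add a separate argument using the triple $(1_1,2_{\hat t},j_{\hat q})$ to kill $\alpha_{1q}(1,j)$ for $j>2$. With those two fixes your proof becomes essentially identical to the paper's.
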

\begin{proof}
Without loss of generality, we show that inequality $\sum_{q \in [d]}{X_{1q}(1,2)} \leq 1$ defines a facet of $\C_{n,d}$.
As in the proof of Proposition~\ref{facet1}, denote by $\alpha X \leq \beta$ a nontrivial valid inequality for $\C_{n,d}$ that is binding at all consistent partial maps satisfying
$\sum_{q \in [d]}{X_{1q}(1,2)} = 1$. Let $\hat q \in [d]$ and consider the partial maps with $\M_X = \{(1_1, 2_{\hat q})\}$.
Substituting this point in $\alpha X = \beta$ yields:
\begin{equation}\label{e1}
\alpha_{1q} (1,2) = \beta, \quad \forall q \in [d].
\end{equation}
Next, for each $\hat q \in [d]$, consider any partial maps of the form $\M_X = \{(1_1,2_{\hat q}), (i_t, j_l)\}$. Notice that by definition of $\M_X$, we have $i_t \neq 1_1$, $i_t \neq 2_{\hat q}$,
$j_l \neq 1_1$, and $j_l \neq 2_{\hat q}$.
Substituting such points in $\alpha X = \beta$ and using~\eqref{e1} give:
\begin{equation}\label{e2}
\alpha_{tq} (i,j) = 0, \quad \forall 1 \leq i < j \leq n, \; \forall t,q \in [d] \quad {\rm such \; that} \quad (i,t) \neq (1,1).
\end{equation}
Finally for any $j \in [n] \setminus \{1,2\}$ and any $\hat t, \hat q \in [d]$, consider the partial maps with $\M_X = \{(1_1, 2_{\hat t}, j_{\hat q})\}$.
Substituting such points in $\alpha X = \beta$ and using~\eqref{e1} and~\eqref{e2} give:
\begin{equation}\label{e3}
\alpha_{1q} (1,j) = 0, \quad \forall 2 < j \leq n, \; \forall q \in [d].
\end{equation}
By~\eqref{e1}-\eqref{e3}, inequality $\alpha X \leq \beta$ can be written as $\beta \sum_{q \in [d]}{X_{1q}(1,2)} \leq \beta$. Moreover, since this inequality is valid for $\C_{n,d}$, it follows that $\beta > 0$ and this completes the proof.
\end{proof}

The following proposition establishes the strength of consistency inequalities; that is, consistency inequalities~\eqref{superg} are facet-defining for
$\C_{n,d}$.

\begin{proposition}\label{facet3}
Consistency inequalities~\eqref{superg} define facets of the joint matching polytope for all nonempty $D_1, D_2 \subseteq [d]$.
\end{proposition}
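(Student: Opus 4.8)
The plan is to follow the same template used in the proofs of Propositions~\ref{facet1} and \ref{facet2}: exploit full-dimensionality of $\C_{n,d}$ (Proposition~\ref{fullDim}) and show that any valid inequality $\alpha X \le \beta$ that is tight on every vertex lying on the face defined by a given consistency inequality must be a positive multiple of that consistency inequality. By symmetry among the three families in~\eqref{superg}, it suffices to treat (say) the first inequality; fix $1\le i<j<k\le n$, fix nonempty $D_1\subseteq[d]$, $D_2\subseteq[d]$, and $l\in[d]$, and write $F$ for the corresponding face of $\C_{n,d}$, consisting of all consistent partial maps satisfying $-\sum_{t\in D_1}\sum_{q\in D_2}X_{tq}(i,j)+\sum_{q\in D_2}X_{ql}(j,k)+\sum_{t\in D_1}X_{tl}(i,k)=1$.

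First I would exhibit a concrete vertex on $F$ to normalize $\beta$: the partial map with $\M_X=\{(i_t, k_l)\}$ for a single $t\in D_1$ achieves value $1$, so substituting it into $\alpha X=\beta$ gives a first relation. Then I would feed in a carefully chosen list of vertices of $F$ and read off that most coefficients $\alpha_{tq}(a,b)$ vanish, and that the surviving ones are forced into exactly the pattern $-\mathbf 1_{D_1}\mathbf 1_{D_2}^T$ on block $(i,j)$, $+$ the indicator of $D_2$ in column $l$ on block $(j,k)$, and $+$ the indicator of $D_1$ in column $l$ on block $(i,k)$, all with a common positive scalar. The bookkeeping is of the usual two-step kind: (a) vertices of the form $\M_X=\emptyset$ or a single far-away pair $\{(a_s,b_r)\}$ with $(a,b,s,r)$ avoiding all three active blocks force $\beta=0$ and kill all coefficients outside those blocks; (b) vertices that activate the face (e.g. a single pair $(i_t,k_l)$, $t\in D_1$; a single pair $(j_q,k_l)$ rewritten suitably, or more precisely the triple $(i_t,j_q,k_l)$ with $t\in D_1, q\in D_2$, which simultaneously sets $X_{tq}(i,j)=X_{ql}(j,k)=X_{tl}(i,k)=1$ and keeps the face value at $1$), compared against these augmented by a disjoint far pair, pin down the active coefficients and show no other entry of the three active blocks can have a nonzero coefficient. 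Throughout I must respect the definition of $\M_X$ — the three listed indices in any triple must be pairwise distinct as elements — which is exactly where $n\ge3$ and $d\ge2$ are used: having a third object $k$ distinct from $i,j$ and at least two elements per object gives enough room to build all the needed affinely-independent vertices on $F$.

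Concretely the sequence of steps would be: (1) normalize with a face-activating vertex to relate $\beta$ to a single active coefficient; (2) use "empty" and "single distant pair" vertices on $F$ to get $\beta=0$ is \emph{not} what happens here — rather, because the face value is a genuine nontrivial combination, one instead gets $\beta$ equal to a nonzero common value $c>0$ and all coefficients on blocks other than $(i,j),(j,k),(i,k)$ equal to zero (those vertices can be kept on $F$ by not touching rows/columns appearing in the inequality); (3) on block $(j,k)$, use vertices $\{(i_t,j_q,k_l)\}$ versus $\{(i_t,j_q,k_l),(\text{far pair})\}$ to show $\alpha_{ql}(j,k)=c$ for $q\in D_2$ and $\alpha_{qr}(j,k)=0$ for $r\ne l$ or $q\notin D_2$; (4) symmetrically handle block $(i,k)$, getting $\alpha_{tl}(i,k)=c$ for $t\in D_1$ and $0$ otherwise; (5) handle block $(i,j)$, getting $\alpha_{tq}(i,j)=-c$ for $(t,q)\in D_1\times D_2$ and $0$ otherwise; (6) conclude $\alpha X\le\beta$ is $c$ times the consistency inequality with $c>0$ forced by validity, hence the inequality is facet-defining by Proposition~\ref{fullDim}.

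The main obstacle — and the part that needs genuine care rather than routine substitution — is step (5): when a vertex sets $X_{tq}(i,j)=1$ for some $(t,q)\in D_1\times D_2$, cycle consistency with any other paired element creates a \emph{cascade} of forced $1$'s across the $(j,k)$ and $(i,k)$ blocks, so the vertices of $F$ that touch the $(i,j)$ block are more constrained than those touching only the $(j,k)$ or $(i,k)$ blocks, and I cannot freely "turn on one entry at a time." The fix is to build these vertices out of triples in $\M_X$ (so that the cascade is already accounted for) and to always move from one valid vertex of $F$ to another by a single consistent local change whose net effect on $\alpha X$ isolates one coefficient difference; verifying that each such pair of vertices actually lies on $F$ (face value stays exactly $1$, not $0$ or $2$) is the delicate point, and is where I would spend most of the effort. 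I would also need to double-check the degenerate sub-cases $|D_1|=1$ or $|D_2|=1$ (where~\eqref{superg} reduces to~\eqref{gtri}) are not secretly excluded by the vertex constructions — they are not, since the argument only ever needs one element inside $D_1$ and one inside $D_2$ plus one outside each (available because $d\ge2$).
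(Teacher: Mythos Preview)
Your approach is correct and is essentially the paper's: fix one of the three inequalities by symmetry, then substitute a sequence of small consistent partial maps lying on the face into $\alpha X=\beta$ (single pairs $\{(i_t,k_l)\}$ or $\{(j_q,k_l)\}$, the triple $\{(i_t,j_q,k_l)\}$, and each of these augmented by a disjoint ``far'' pair) to pin down every coefficient. Your step-(5) worry is overstated --- the paper handles the negative block in one line by reading $\alpha_{tq}(i,j)=-\beta$ directly off the triple $\{(i_t,j_q,k_l)\}$ once the two positive coefficients are known, with no cascade to manage; likewise your step-(2) discussion is muddled (the empty map is \emph{not} on $F$), but the correct move you eventually describe --- face-activating vertex plus a disjoint far pair --- is exactly what the paper does.
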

\begin{proof}
Without loss of generality, we show that for any nonempty $D_1, D_2 \subseteq [d]$,
the inequality
\begin{equation}\label{ineq1}
\sum_{t\in D_1}{X_{1t}(1,2)} - \sum_{t \in D_1} {\sum_{q \in D_2 }{X_{tq} (2,3)}}  + \sum_{q \in D_2 }{X_{1q}(1,3)} \leq 1,
\end{equation}
defines a facet of $\C_{n,d}$. Denote by $\alpha X \leq \beta$ a nontrivial valid inequality for $\C_{n,d}$ that is binding at all consistent partial maps satisfying inequality~\eqref{ineq1} at equality.
Notice that inequality~\eqref{ineq1} is binding at a point if in the corresponding partial maps there exists $e \in \M_X$ with $1_1 \in e$ such that (i) $2_t\in e$ for some $t \in D_1$
and $3_q \notin e$ for all $q \in D_2$  or (ii) $2_t \notin e$ for all $t \in D_1$ and $3_q\in e$ for some $q \in D_2$ or (iii) $2_t \in e$ for some $t \in D_1$ and
$3_q \in e$ for some $q \in D_2$.
First consider the partial maps with $\M_X = \{(1_1,2_t)\}$  for some $t \in D_1$ (resp. $\M_X = \{(1_1,3_q)\}$ for some $q \in D_2$).
Substituting in $\alpha X = \beta$ yields:
\begin{equation}\label{f1}
\alpha_{1t} (1,2) = \alpha_{1q} (1,3) = \beta, \quad \forall t \in D_1, \; \forall q \in D_2.
\end{equation}
Next, consider the partial maps with $\M_X = \{(1_1,2_t,3_q)\}$ for some $t \in D_1$ and some $q \in D_2$;
substituting this point in $\alpha X = \beta$ and using~\eqref{f1} gives:
\begin{equation}\label{f2}
\alpha_{tq} (2,3) = -\beta, \quad \forall t \in D_1, q \in D_2.
\end{equation}
Now let $t \in D_1$, consider any partial maps of the form $\M_X = \{(1_1,2_t),(i_r,j_s)\}$ (resp. let $q \in D_2$, and consider any partial map of the form
$\M_X = \{(1_1,3_q),(i_r,j_s)\}$).
Substituting these points in $\alpha X = \beta$ and using~\eqref{f1} yield:
\begin{align}\label{f3}
\alpha_{tq} (i,j) = 0, \quad & \forall 1 \leq i < j \leq n, \; \forall t,q \in [d], \; {\rm such \; that} \; (t,i) \neq (1,1) \; {\rm and} \\
& (t,q,i,j) \neq  (\tilde t, \tilde q,2,3), \; {\rm for \; some} \; \tilde t \in D_1, \; \tilde q \in D_2 \nonumber.
\end{align}
Let $t \in D_1$, for any $3 < j \leq n$ and any $q \in [d]$, consider the partial maps of the form $\M_X = \{(1_1, 2_t, j_q)\}$.
Substituting in $\alpha X = \beta$ and using~\eqref{f1} and~\eqref{f3} yield:
\begin{equation}\label{f4}
\alpha_{1q} (1,j) = 0, \quad \forall 3 < j \leq n, \; \forall q \in [d].
\end{equation}
Finally, let $t \in D_1$; for any $q \in [d] \setminus D_2$, consider the partial maps $\M_X = \{(1_1,2_t,3_q)\}$.
Substituting in $\alpha X = \beta$ and using~\eqref{f1} and~\eqref{f3} yield
\begin{equation}\label{f5}
\alpha_{1q} (1,3) = 0, \quad \forall q \in [d] \setminus D_2.
\end{equation}
Similarly, let $t \in D_2$; for any $q \in [d] \setminus D_1$, consider the partial maps $\M_X = \{(1_1,2_q, 3_t)\}$ to obtain
\begin{equation}\label{f7}
\alpha_{1q} (1,2) = 0, \quad \forall q \in [d] \setminus D_1.
\end{equation}
From~\eqref{f1}-\eqref{f7}, it follows that $\alpha X \leq \beta$ can be written as
$$\beta\Big( \sum_{t \in D_1} {X_{1t}(1,2)} - \sum_{t\in D_1}{\sum_{q \in D_2}{X_{tq} (2,3)}} + \sum_{q \in D_2}{X_{1q}(1,3)} \Big) \leq \beta.$$
By the validity of the latter we have
$\beta > 0$ and this completes the proof.
\end{proof}

Finally, we consider block consistency inequalities~\eqref{duperg}.
The next proposition indicates that all non-redundant block consistency inequalities~\eqref{duperg}; namely, those with $|D_1| + |D_2| > |D_3|$ , are facet-defining.
The proof is given in Section~\ref{appendix1}.

\begin{proposition}\label{newfacet}
Inequalities~\eqref{duperg} define facets of the joint matching polytope for all nonempty $D_1, D_2, D_3 \subseteq [d]$ such that $|D_1|+|D_2| > |D_3|$.
\end{proposition}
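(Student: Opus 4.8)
The plan is to follow the template already established in the proofs of Propositions~\ref{facet2} and~\ref{facet3}: pick a canonical representative inequality, assume a nontrivial valid inequality $\alpha X \le \beta$ is binding at every consistent partial map that satisfies the block consistency inequality at equality, and then progressively force all the coefficients $\alpha_{tq}(i,j)$ to take the prescribed values by evaluating $\alpha X = \beta$ at carefully chosen vertices of $\C_{n,d}$. Concretely, I would take $i=1$, $j=2$, $k=3$ and work with the third inequality in~\eqref{duperg}, i.e.
$$
\sum_{t \in D_1}\sum_{l \in D_3}{X_{tl}(1,2)} + \sum_{l \in D_3}\sum_{q \in D_2}{X_{lq}(2,3)} - \sum_{t \in D_1}\sum_{q \in D_2}{X_{tq}(1,3)} \le |D_3|,
$$
with $D_1 \subseteq [d]$ indexing elements of $\S_1$, $D_3 \subseteq [d]$ indexing elements of $\S_2$, $D_2 \subseteq [d]$ indexing elements of $\S_3$, and $|D_1|+|D_2| > |D_3|$.

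The first step is to exhibit a single vertex attaining equality to pin down $\beta$ in terms of the ``block'' coefficients. The natural choice is a maximum matching of size $|D_3|$ that saturates the left-hand side: since $|D_1|+|D_2| > |D_3|$, I can pick an injection sending the first $\min(|D_1|,|D_3|)$ elements of $D_3$ into $D_1$ (contributing to the $X(1,2)$ block) and the remaining elements of $D_3$ into $D_2$ (contributing to the $X(2,3)$ block), while keeping the negative $X(1,3)$ term zero; the condition $|D_1|+|D_2|>|D_3|$ guarantees such an assignment exists with every used row/column of $D_3$ hit exactly once. That gives one linear relation. Then, mimicking~\eqref{f1}--\eqref{f2}, I would vary which elements of $D_1$, $D_2$, $D_3$ are used and add ``off-block'' singleton maps $(i_r,j_s)$ with $i_r, j_s$ outside the active triple, to deduce: (a) all coefficients $\alpha_{tl}(1,2)$ with $t \in D_1, l \in D_3$ are equal, say to $\gamma$; (b) all $\alpha_{lq}(2,3)$ with $l \in D_3, q \in D_2$ equal $\gamma$; (c) all $\alpha_{tq}(1,3)$ with $t \in D_1, q \in D_2$ equal $-\gamma$; (d) every other $\alpha_{tq}(i,j) = 0$; and finally $\beta = |D_3|\gamma$ by substituting the saturating vertex. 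The ``other coefficients vanish'' arguments split into the same cases as in Proposition~\ref{facet3}: coefficients on blocks not touching $\{1,2,3\}$, coefficients on the $(1,j)$ or $(2,j)$ or $(1,2)/(2,3)/(1,3)$ blocks but with indices outside $D_1,D_2,D_3$, and cross-object coefficients — each killed by augmenting a suitable partial map with one extra pair and using $\beta = |D_3|\gamma$ together with already-derived relations. Validity of~\eqref{duperg} then forces $\gamma > 0$, and full-dimensionality (Proposition~\ref{fullDim}) finishes the argument.

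The main obstacle, relative to the cleaner Proposition~\ref{facet3} where $|D_3|=1$, is the bookkeeping needed to show the coefficients are \emph{constant} across the whole $D_1 \times D_3$ (resp. $D_3 \times D_2$, $D_1 \times D_2$) block rather than merely along a single row or column. When $|D_3| > 1$ the tight vertices are matchings of size $|D_3|$, and a given matching uses only $|D_3|$ of the $|D_1||D_3|$ possible $(t,l)$ entries, so no single vertex sees all the coefficients; I will need to compare two tight matchings that differ by a single swap (reroute one element of $D_3$ from some $t_1 \in D_1$ to a different $t_2 \in D_1$, adjusting the induced $X(1,3)$ entries consistently — here the hypothesis $|D_1|+|D_2| > |D_3|$ is exactly what guarantees room to perform such a swap while staying tight), take the difference of the two equations $\alpha X = \beta$, and conclude $\alpha_{t_1 l}(1,2) - \alpha_{t_2 l}(1,2)$ equals a combination of $X(1,3)$ coefficients already known to be $\mp\gamma$, yielding equality of the two $(1,2)$-block coefficients. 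Iterating these elementary swaps connects all entries of each block. Care is also needed in the degenerate sub-case $|D_3| \ge |D_1|$ (resp. $\ge |D_2|$), where every saturating matching must use some $X(2,3)$ entries, but the inequality $|D_1|+|D_2|>|D_3|$ keeps the construction feasible; I would handle the extreme case $|D_3| = |D_1|+|D_2|-1$ separately if the generic swap argument runs out of slack, though I expect the same reroute technique to apply after a small case analysis.
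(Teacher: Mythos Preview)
Your plan follows the same template as the paper's proof and would go through, but there is one concrete gap in the sketch. The ``elementary swap'' you describe---rerouting a fixed $l\in D_3$ from $t_1\in D_1$ to an unused $t_2\in D_1$ (or to an unused $q\in D_2$)---takes place entirely within tight vertices of type (i)/(ii), where no entry of the negative block $X(1,3)$ is set at all. Consequently the swap directly yields $\alpha_{t_1 l}(1,2)=\alpha_{t_2 l}(1,2)$ (and similarly $\alpha_{tl}(1,2)=\alpha_{lq}(2,3)$), with no ``combination of $X(1,3)$ coefficients'' appearing; so for each fixed $l$ you get a common value $\gamma_l$, but you learn nothing about how $\gamma_{l_1}$ compares to $\gamma_{l_2}$ for $l_1\neq l_2$: every type-(i)/(ii) tight vertex produces the same equation $\sum_{l\in D_3}\gamma_l=\beta$. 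Swapping two $l$'s between two $t$'s gives $\gamma_{l_1}+\gamma_{l_2}=\gamma_{l_1}+\gamma_{l_2}$, which is vacuous.

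The missing ingredient---and this is exactly what the paper does---is to use a tight vertex in which one distinguished $\hat l\in D_3$ satisfies condition~(iii), i.e.\ $e_{\hat l}=(1_{\hat t},2_{\hat l},3_{\hat q})$ with $\hat t\in D_1$, $\hat q\in D_2$, while all other $l$'s satisfy (i) or (ii). Comparing this with the analogous vertex where $\tilde l\neq\hat l$ plays the distinguished role (using the \emph{same} $\hat t,\hat q$) forces $\gamma_{\hat l}=\gamma_{\tilde l}$; substituting back then gives $\alpha_{\hat t\hat q}(1,3)=-\beta/|D_3|$, which is your step (c). The hypothesis $|D_1|+|D_2|>|D_3|$ is precisely what guarantees both vertices exist: after $\hat l$ consumes one element of $D_1$ and one of $D_2$, the remaining $|D_3|-1$ elements still fit into the remaining $|D_1|+|D_2|-2\ge|D_3|-1$ slots. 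This works uniformly, so the ``extreme case'' $|D_3|=|D_1|+|D_2|-1$ you flagged needs no separate treatment. Once you have (a)--(c), your plan for (d) matches the paper's augmentation arguments (their $\M_X^3$ and $\M_X^4$) and goes through without further subtlety.
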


It is interesting to note that the polytope defined by
facet-defining inequalities of Propositions~\ref{facet1}-\ref{newfacet} does \emph{not} coincide with the joint matching polytope even for $n=3$ and $d=2$.
The following example demonstrates this fact:

\begin{example}
Let $n =3$ and $d =2$; it can be checked that the following satisfies all facet-defining inequalities of Propositions~\ref{facet1}-\ref{newfacet}:
\begin{equation}\label{ex6}
X(1,2) =
\begin{pmatrix}
0 & \frac{1}{2} \\
\frac{1}{2} & 0
\end{pmatrix}, \qquad
X(2,3) =
\begin{pmatrix}
\frac{1}{2} & 0 \\
\frac{1}{2} & \frac{1}{2}
\end{pmatrix}, \qquad
X(1,3) =
\begin{pmatrix}
\frac{1}{2} & \frac{1}{2} \\
0 & \frac{1}{2}
\end{pmatrix}.
\end{equation}
Now consider the following inequality:
\begin{align*}
-&X_{11}(1,2)-X_{12}(1,2)-X_{22}(1,2)+X_{11}(2,3)+X_{21}(2,3)+X_{22}(2,3)+X_{11}(1,3)\\
 +&X_{12}(1,3) +X_{22}(1,3)\leq 2.
\end{align*}
To see the validity of the above inequality, note that by inequalities~\eqref{subD} we have $X_{11}(2,3)+X_{21}(2,3)+X_{22}(2,3) \leq 2$
and $X_{11}(1,3) +X_{12}(1,3) +X_{22}(1,3)\leq 2$. Moreover, if $X_{11}(2,3)+X_{21}(2,3)+X_{22}(2,3) =2$, then we must have $X_{11}(2,3) = X_{22}(2,3) =1$.
This in turn implies that if $X_{11}(1,3) =1$ (resp. $X_{12}(1,3) =1$ and $X_{22}(1,3) =1$), then $X_{11}(1,2) =1$ (resp. $X_{12}(1,2) = 1$ and $X_{22}(1,2) = 1$).
By symmetry, the validity of the inequality for the case with $X_{11}(1,3) +X_{12}(1,3) +X_{22}(1,3) = 2$ follows.
Substituting~\eqref{ex6} in the above inequality yields $-\frac{1}{2} + \frac{3}{2}+\frac{3}{2} \not\leq 2$, implying the proposed facet defining inequalities do not characterize $\C_{3,2}$.
\end{example}

Obtaining a linear characterization of cycle consistency for joint object matching remains an open question and is a subject of future research.

\subsection{A new LP relaxation for joint object matching}
We propose an LP relaxation of Problem~\eqref{ip} whose feasible set is defined by facet-defining inequalities of Propositions~\ref{facet1},~\ref{facet2}
and~\ref{facet3}:
\begin{align}\label{lp2}
\tag{LPF}
\min \; & \sum_{(i,j) \in \E}{\left\langle{\bf 1}_{d_i}{\bf 1}_{d_j}^T-2 X^{\rm in}(i,j), \; X(i,j)\right\rangle}\nonumber\\
\st \;  & X(i,j) {\bf 1}_{d_j} \leq {\bf 1}_{d_j}, \; X^T(i,j) {\bf 1}_{d_i} \leq {\bf 1}_{d_i} , \quad \forall 1 \leq i < j \leq n\nonumber \\
&-\sum_{t \in D_1}{\sum_{q \in D_2}{X_{tq}(i,j)}}+\sum_{q \in D_2}{X_{ql}(j,k)} +\sum_{t \in D_1}{X_{tl}(i,k)}\leq 1, \nonumber\\
& \qquad \qquad \qquad \forall D_1 \subseteq [d_i], \; D_2 \subseteq [d_j], \; l \in [d_k], \; D_1, D_2 \neq \emptyset, \; 1 \leq i < j  < k \leq n\nonumber\\
&\sum_{t \in D_1}{X_{lt}(i,j)} -\sum_{t \in D_1}{\sum_{q\in D_2}{X_{tq}(j,k)}}+\sum_{q\in D_2} {X_{lq}(i,k)}\leq 1,\nonumber \\
& \qquad \qquad \qquad \forall D_1 \subseteq [d_j], \; D_2 \subseteq [d_k], \; l \in [d_i], \; D_1, D_2 \neq \emptyset, \; 1 \leq i < j  < k \leq n\nonumber\\
&\sum_{t \in D_1}{X_{tl}(i,j)} +\sum_{q\in D_2}{X_{lq}(j,k)}-\sum_{t \in D_1}{\sum_{q\in D_2} {X_{tq}(i,k)}}\leq 1,\nonumber \\
&\qquad \qquad \qquad \forall D_1 \subseteq [d_i], \; D_2 \subseteq [d_k],\; l \in [d_j], \; D_1, D_2 \neq \emptyset, \; 1 \leq i < j  < k \leq n \nonumber\\
& X(i,j) \geq 0, \quad \forall 1 \leq i < j \leq n. \nonumber
\end{align}
Recall that the number of consistency inequalities~\eqref{superg} in Problem~\eqref{lp2} is exponential in the number of elements $d_i$ in object $i$.
However, by Proposition~\ref{p:separate}, separating over these inequalities can be done in time that is polynomial in $n, d_i$, $i \in [n]$. By polynomial equivalence of separation and optimization~\cite{gls88}, it then follows that Problem~\eqref{lp2} is solvable in polynomial time.


As we detailed in Section~\ref{sec:intro}, in joint object matching the size of the universe is not known a priori.
However, some studies~\cite{CheGuiHua14,ZZD15} have proposed to employ~\emph{an upper bound estimate} on the size of the universe to tighten their SDP relaxation.
Such an upper bound estimate can also be used to tighten our proposed LP relaxation. Since a reliable estimation of $\hat m$ is based on a good understanding of the specific application and the input data, and hence is beyond the scope of this paper, we do not include it in our numerical experiments. However, for completeness, in Section~\ref{appendix2} we present a class of valid inequalities that can significantly tighten the LP relaxation when an estimate on the size of the universe is available.

\section{Recovery for permutation group synchronization}
\label{sec:dualCertificate}

In this section, we study the recovery properties of the proposed LP relaxation under a popular stochastic model for the input maps.
We consider the case with $d_i =d  = m$ for all $i \in [n]$; this special case is often referred to as permutation group synchronization in the literature~\cite{PacKonSin13,PaKoSaSi14,Ling20}.
For simplicity, we assume that the input consists of all possible pair-wise matchings, that is, we assume that the map graph is complete.
Define $a_{tq}(i,j) := 1 - 2 X_{tq}^{\rm in}(i,j)$ for all $t,q \in [d]$ and for all $1 \leq i < j \leq n$.
As the first step, in this paper, we focus on the basic LP relaxation, that is, the LP consisting of all consistency inequalities with $|D_1| = |D_2| = 1$:
\begin{align}\label{primal}
\tag{P}
\min \quad & \sum_{1\leq i < j \leq n}{\sum_{t,q \in [d]}{a_{tq}(i,j)X_{tq}(i,j)}}\nonumber\\
\st \quad  & \sum_{q \in [d]}{X_{tq}(i,j)} = 1, \quad \forall t \in [d], \; \forall 1 \leq i < j \leq n, \label{rsum} \\
          & \sum_{t \in [d]}{X_{tq}(i,j)} = 1, \quad \forall q \in [d], \; \forall 1 \leq i < j \leq n, \label{csum} \\
         &  \left\{
\begin{array}{ll}
X_{lt}(i,j) + X_{tq}(j,k) -X_{lq}(i,k) \leq 1,\\
X_{lt}(i,j) - X_{tq}(j,k) +X_{lq}(i,k) \leq 1, \\
- X_{lt}(i,j) + X_{tq}(j,k) +X_{lq}(i,k)   \leq 1,\\
\end{array} \right. \forall l,t,q \in [d], \; \forall 1 \leq i < j < k\leq n, \label{tri}\\
           & X_{tq}(i,j) \geq 0, \quad \forall t, q \in [d], \; \forall 1 \leq i < j \leq n. \label{nonnega}
\end{align}

\paragraph{The random corruption model.} To analyze the quality of the proposed LP relaxation,
we consider the \emph{random corruption model}~\cite{CheGuiHua14,Ling20}, defined as follows:
\begin{equation*}
X^{\rm in} (i,j)=\begin{cases}
      Y(i) Y^T(j) &  \text{with probability $p_{\rm true}$},\\
      {\bf{P}}_{ij} & \text{with probability $1-p_{\rm true}$},
       \end{cases}
\end{equation*}
where $Y(i) \in \{0,1\}^{d \times d}$, $i \in [n]$ encodes the correspondences between the $i$-th object and the universe, ${\bf P}_{ij}$ is an independent
random permutation matrix uniformly sampled from $d!$ permutation matrices, and $p_{\rm true} \in [0,1]$ denotes the probability that each $X^{\rm in} (i,j)$ coincides with the ground truth. Without loss of generality, we assume
that the elements of each object are ordered so that the $q$-th element of all objects are matched with each other for all $q \in [d]$; that is,
$X^{\rm in} (i,j) = I_d$, whenever the input matches the ground truth. For each $1 \leq i < j \leq n$, denote by $z_{ij}$ a Bernoulli random variable with parameter $p_{\rm true}$, independently drawn from ${\bf P}_{ij}$.
It then follows that:
\begin{equation}\label{unifmodel}
X^{\rm in} (i,j)=z_{ij} I_d + (1-z_{ij}) {\bf P}_{ij}, \quad \forall 1 \leq i < j \leq n.
\end{equation}
We are interested in addressing the following question:

\vspace{0.1in}
\emph{What is the maximum level of corruption under which the basic LP relaxation recovers the ground truth with high probability?}
\vspace{0.1in}

Recall that we say an optimization problem (\eg the LP relaxation) recovers the ground truth if its unique optimal solution coincides with the ground truth.
Moreover, by high probability we imply that the probability tends to 1 as the number of objects $n \rightarrow \infty$.
The following theorem states that the basic LP relaxation recovers the ground truth with high probability provided that the corruption level is below $40\%$:

\begin{theorem}\label{main}
Suppose that input maps $X^{\rm in} (i,j)$ for all $1\leq i < j \leq n$ are generated according to the random corruption model~\eqref{unifmodel} and assume that $d \in  o(({n/\log(n)})^{\frac 14})$.
Then Problem~\eqref{primal} recovers the ground truth maps with high probability if $p_{\rm true} > 0.585$.
\end{theorem}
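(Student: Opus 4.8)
The plan is to establish recovery via LP duality: construct an explicit dual feasible solution whose complementary slackness conditions force the primal optimum to be the ground truth $X(i,j)=I_d$, and then show this dual solution is feasible with high probability when $p_{\rm true}>0.585$. First I would write down the dual of Problem~\eqref{primal}. The primal has equality multipliers, say $\mu_t(i,j)$ and $\nu_q(i,j)$ for the row/column sum constraints~\eqref{rsum}--\eqref{csum}, nonnegative multipliers $\lambda$ for each of the three families of triangle inequalities~\eqref{tri}, and the nonnegativity constraints~\eqref{nonnega}. For the ground truth to be the unique optimum, I want the reduced cost of every variable $X_{tq}(i,j)$ to be nonnegative, strictly positive for $t\neq q$ (the off-diagonal entries, which are $0$ in the ground truth), and exactly zero for $t=q$ (so that complementary slackness is compatible with $X_{qq}(i,j)=1$). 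The triangle-inequality multipliers are the crucial degrees of freedom: a natural ansatz is to put weight only on triangle inequalities of the "consistency" type indexed by triples $(i,j,k)$ and to route, for each off-diagonal pair $(t,q)$ with $t\neq q$, a certificate through the third object $k$, choosing which of the three inequalities to activate based on whether the input maps $X^{\rm in}(i,k)$ and $X^{\rm in}(k,j)$ are "correct" at the relevant entries. This mimics the dual-certificate constructions used for SDP recovery in~\cite{CheGuiHua14,HuaGui13}, adapted to the LP setting.

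Concretely, for each ordered pair of objects and each off-diagonal $(t,q)$, the gap $a_{tq}(i,j) = 1 - 2X^{\rm in}_{tq}(i,j)$ equals $+1$ when the input is correct at that entry (no match) and $-1$ when the corrupting permutation happens to match $t\to q$. I expect the certificate to aggregate, over the $n-2$ choices of intermediate object $k$, contributions of the form: when the two "legs" $X^{\rm in}(i,k)$, $X^{\rm in}(k,j)$ are both correct (which happens with probability $\approx p_{\rm true}^2$ for each $k$), the third object furnishes a triangle inequality that is tight at the ground truth and pushes the reduced cost of $X_{tq}(i,j)$ upward; when a leg is corrupted the contribution may be adverse. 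The key estimate is then a concentration argument: the net reduced cost of each off-diagonal variable is a sum over $k$ of nearly-independent bounded terms with positive mean proportional to $(2p_{\rm true}-1)$-type quantities, so by Bernoulli/Hoeffding-type concentration it stays positive uniformly over all $O(n^2 d^2)$ off-diagonal variables as long as the mean beats the fluctuations — a union bound over $n^2 d^2$ events against Gaussian tails of width $O(\sqrt n)$ is what forces the $d \in o((n/\log n)^{1/4})$ hypothesis. The threshold $0.585$ will emerge as the value of $p_{\rm true}$ at which the expected reduced cost of the worst off-diagonal variable changes sign under the optimal choice of certificate weights; I would expect $0.585 \approx$ the root of some low-degree polynomial or exponential equation in $p$ (e.g. balancing $p^2$ against $(1-p)$-weighted terms, plausibly $3p^2 - \ldots$, giving $p \approx 0.585$).

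The main obstacle, and the part that will require the most care, is handling the \emph{dependence structure}. The input maps $X^{\rm in}(i,j)$ for different pairs sharing an object are not independent as combinatorial objects once we condition on events like "$X^{\rm in}(i,k)$ is correct," and the corrupting permutations ${\bf P}_{ij}$ introduce their own correlations across the entries $(t,q)$ within a single block. So the sum-over-$k$ used in the concentration step is not a sum of independent terms; I would need to either (i) condition carefully on the Bernoulli indicators $z_{ij}$ first and argue the permutation randomness contributes only lower-order terms, or (ii) use a martingale/Azuma argument with a suitable filtration over the objects, or (iii) invoke concentration for sums of negatively-associated or exchangeable variables. A secondary subtlety is \emph{uniqueness}: nonnegativity of all reduced costs gives optimality of the ground truth, but to get that it is the \emph{unique} optimum I need strict positivity of the reduced cost on every off-diagonal variable and an argument that the diagonal entries are then pinned down by the row/column sum equalities — this is where full strict complementary slackness, rather than mere feasibility of the dual, is needed, and verifying it survives the concentration bounds (i.e. the reduced costs are bounded \emph{away} from zero, not merely nonnegative) is essential. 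Finally I would assemble: dual feasibility (sign constraints on the $\lambda$'s, plus the reduced-cost conditions) holding with probability $1-o(1)$ implies, via weak duality and complementary slackness, that Problem~\eqref{primal} recovers the ground truth with high probability whenever $p_{\rm true}>0.585$.
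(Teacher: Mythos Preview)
Your high-level strategy matches the paper's: construct an explicit dual certificate, verify strict complementary slackness (the paper invokes Mangasarian's uniqueness criterion, which is exactly your ``reduced costs strictly positive off-diagonal, then row/column sums pin the diagonal'' argument), and show feasibility with high probability via Hoeffding plus a union bound over $O(n^2d^2)$ events. Two points of divergence are worth flagging.

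First, your concern about dependence is largely unfounded here and risks sending you down an unnecessary detour. For fixed $(i,j,t,q)$, the certificate's reduced-cost expression is a sum over $k\in[n]\setminus\{i,j\}$ of terms depending only on $X^{\rm in}(k,i)$ and $X^{\rm in}(k,j)$; since distinct $k$ index disjoint pairs, these summands are genuinely independent under the model~\eqref{unifmodel}, and plain Hoeffding applies. No martingale or negative-association machinery is needed. The within-block correlations of the random permutation ${\bf P}_{ij}$ do not matter because you only union-bound over $(t,q)$ rather than seek joint concentration.

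Second, and more substantively, the paper's dual certificate is not a single ansatz but a \emph{two-parameter family}: the multipliers $\bar\lambda$ are built from expressions of the form $\frac{a_{tq}(\cdot,\cdot)}{\alpha}+\frac{a_{tt}(\cdot,\cdot)}{\beta d}$ with free scalars $\alpha,\beta>0$. The deterministic optimality conditions then split into five cases (C1)--(C5) depending on the sign pattern of $a_{tt},a_{qq},a_{tq}$, each yielding a different constraint on $(\alpha,\beta,p_{\rm true})$ in expectation. The threshold $0.585$ is obtained by \emph{optimizing over $(\alpha,\beta)$} so that all five constraints hold simultaneously (the paper takes $\alpha\approx 1.172$, $\beta\approx 1.657$). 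Your proposal treats the certificate weights as essentially determined by the data; without the extra degrees of freedom you would likely land on a suboptimal threshold or be unable to satisfy all cases at once. This parametrize-then-optimize step is the main idea you are missing.
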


In the remainder of this section, we prove Theorem~\ref{main} via a number of intermediate steps stated in six lemmata.
Recall that for our ground truth, we assume
that the elements of each object are ordered
in a way that for each $q \in [d]$,  $q$-th elements of all objects are matched with each other. That is,
for each $1 \leq i < j \leq n$, we have $\bar X_{qq} (i,j) = 1$ for all $q \in [d]$ and $\bar X_{tq}(i,j) = 0$ for all $t \neq q \in [d]$.
We first obtain a deterministic sufficient condition under which $\bar X$ is the unique optimal solution of Problem~\eqref{primal}.
Subsequently, we focus on the random corruption model
and prove that our deterministic condition implies the recovery guarantee of Theorem~\ref{main}.

\subsection{A deterministic condition for recovery}
In the following we obtain a deterministic  sufficient condition under which the ground truth is the unique optimal solution of the basic LP.
First, in Lemma~\ref{optimality}, we obtain a sufficient condition under which $\bar X$
is an optimal solution of Problem~\eqref{primal}, where we assume that each input map $X^{\rm in}(i,j)$, $1\leq i < j \leq n$ is a permutation matrix.
Subsequently, in Lemma~\ref{unique}, we examine the uniqueness of $\bar X$. In the following, we define $a(j,i) := a^T(i,j)$ for all $1 \leq i < j \leq n$.

\begin{lemma}\label{optimality}
For each $1\leq i < j \leq n$ and $t \in [d]$, define
\begin{equation}\label{kkappa}
\begin{split}
\kappa_t(i,j) &:= \frac{1}{n} \sum_{l \in [d] \setminus \{t\}}{\sum_{k \in [n] \setminus \{i,j\}}{\min\left\{\frac{a_{lt}(k,i)}{\alpha}+\frac{a_{tt}(k,j)}{\beta d},
\frac{a_{lt}(k,j)}{\alpha}+\frac{a_{tt}(k,i)}{\beta d}\right\}}}\\
&+\frac{1}{n} \Big(\frac{1}{2\alpha}-\frac{d-1}{2\beta d}\Big)\Big(\sum_{k\in [n]\setminus \{i\}} {a_{tt}(k,i)}+\sum_{k\in [n]\setminus \{j\}} {a_{tt}(k,j)}\Big)-\frac{(d-2)}{\alpha}+1\\
&+\frac{1}{2n} \sum_{l \in [d] \setminus \{t\}}{\Big(\frac{a_{tl}(i,j)+a_{tl}(j,i)}{\alpha}+2\frac{a_{tt}(i,j)}{\beta d}\Big)},
\end{split}
\end{equation}
and for each $1 \leq i < j \leq n$ and $t,q \in [d]$, define
\begin{align}\label{ldelta}
\delta_{t \rightarrow q}^{i \rightarrow j} := \frac{1}{n}\sum_{\substack{k \in [n] \setminus\{i,j\}: \\ a_{tt}(k,i)=-1}}
{\Big(\frac{a_{tq}(k,j)-a_{tq}(i,j)}{\alpha}+\frac{a_{tt}(i,j)-a_{tt}(k,j)}{\beta d}\Big)}.
\end{align}
Let $\alpha > 0$ and $\beta > 0$ be parameters independent of $n,d$.
Then $\bar X$ is an optimal solution of Problem~\eqref{primal}, if for each $t,q \in [d]$ and for each $1 \leq i < j \leq n$, the following conditions
are satisfied:
\begin{itemize}[leftmargin=*]
\item [(i)] if $a_{tt}(i,j) = a_{qq}(i,j) = a_{tq}(i,j) =1$, then
\begin{equation}\label{cond1}\tag{C1}
\delta_{t \rightarrow q}^{i \rightarrow j} \geq 0, \quad \delta_{q \rightarrow t}^{j \rightarrow i} \geq 0,
\end{equation}
%
\item [(ii)] if $a_{tt}(i,j) = a_{qq}(i,j)=1$,  $a_{tq}(i,j) =-1$, then
\begin{equation}\label{cond2}\tag{C2}
\delta_{t \rightarrow q}^{i \rightarrow j} \geq 1, \quad \delta_{q \rightarrow t}^{j \rightarrow i} \geq 1,
\end{equation}
%
%
\item [(iii)] if $a_{tt}(i,j) = a_{qq}(i,j) = -1$, $a_{tq}(i,j)= 1$, then
\begin{align*}\label{cond3}\tag{C3}
\kappa_t(i,j) +\frac{1}{2}\Big(\delta_{t \rightarrow q}^{i \rightarrow j} +\delta_{q \rightarrow t}^{j \rightarrow i}\Big)\geq 0, \quad
\kappa_q(i,j) +\frac{1}{2}\Big(\delta_{t \rightarrow q}^{i \rightarrow j} +\delta_{q \rightarrow t}^{j \rightarrow i}\Big)\geq 0,
\end{align*}
%
%
\item [(iv)] if $a_{tt}(i,j) = 1$, $a_{qq}(i,j) = -1$, $a_{tq}(i,j)= 1$, then
\begin{equation}\label{cond4}\tag{C4}
  \kappa_q(i,j) +\delta_{t \rightarrow q}^{i \rightarrow j} +\delta_{q \rightarrow t}^{j \rightarrow i}\geq 0,
\end{equation}

\item [(v)] if $a_{tt}(i,j) = -1$, $a_{qq}(i,j) = 1$, $a_{tq}(i,j)= 1$, then
\begin{equation}\label{cond5}\tag{C5}
  \kappa_t(i,j) +\delta_{t \rightarrow q}^{i \rightarrow j} +\delta_{q \rightarrow t}^{j \rightarrow i}\geq 0.
\end{equation}


\end{itemize}
\end{lemma}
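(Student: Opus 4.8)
The plan is to certify optimality of $\bar X$ for the linear program~\eqref{primal} by exhibiting a feasible dual solution whose complementary slackness conditions are satisfied at $\bar X$. Since~\eqref{primal} is an LP with equality constraints~\eqref{rsum}--\eqref{csum}, inequality constraints~\eqref{tri}, and nonnegativity~\eqref{nonnega}, I would introduce dual multipliers $\lambda_t(i,j)$ and $\mu_q(i,j)$ for the row- and column-sum constraints, together with nonnegative multipliers for each of the three families of triangle inequalities in~\eqref{tri}. The key design choice is to parametrize the triangle multipliers so that their combined contribution to the reduced cost of each variable $X_{tq}(i,j)$ has a clean closed form; this is where the parameters $\alpha>0$ and $\beta>0$ enter — they control how much ``weight'' is placed on the two structurally different types of triangle inequalities (those that, at $\bar X$, are tight because all three terms are diagonal, versus those tight because exactly one diagonal entry appears). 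Concretely, I would set the multiplier on a triangle inequality involving objects $i,j,k$ proportional to $1/\alpha$ or $1/(\beta d)$ depending on which entries are diagonal, and then sum over the ``free'' index $k$ to define the dual variables for the $(i,j)$ block; the quantities $\kappa_t(i,j)$ and $\delta^{i\to j}_{t\to q}$ in the statement are precisely the aggregated reduced costs that emerge from this bookkeeping.

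The proof then splits into two parts. First, \emph{dual feasibility of the triangle multipliers}: by construction they are built from explicit nonnegative constants ($1/\alpha$, $1/(\beta d)$, and related expressions with $\alpha,\beta>0$), so nonnegativity should follow directly once the parametrization is fixed — though one has to be careful that the coefficient $\tfrac{1}{2\alpha}-\tfrac{d-1}{2\beta d}$ appearing in $\kappa_t$ does not force a sign constraint on the multipliers themselves (it should only affect the dual \emph{objective}/reduced-cost comparison, not feasibility). Second, \emph{complementary slackness / nonnegativity of reduced costs}: for every variable $X_{tq}(i,j)$ that is zero in $\bar X$ (i.e., every off-diagonal entry, $t\neq q$), the reduced cost $a_{tq}(i,j)-\lambda_t(i,j)-\mu_q(i,j)-(\text{triangle terms})$ must be $\geq 0$; for every variable that is one in $\bar X$ (the diagonal entries $t=q$), the reduced cost must be exactly $0$, which pins down $\lambda_t(i,j)+\mu_t(i,j)$. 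After substituting the diagonal condition into the off-diagonal reduced costs, the requirement ``reduced cost $\geq 0$ for all off-diagonal $(t,q)$'' reduces, case by case according to the signs of $a_{tt}(i,j)$, $a_{qq}(i,j)$, $a_{tq}(i,j)$, exactly to conditions~\eqref{cond1}--\eqref{cond5}. The five cases correspond to the five sign patterns that can occur (the remaining patterns are either symmetric or automatically satisfied because the relevant reduced cost is manifestly nonnegative given $a\in\{-1,1\}$).

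The main obstacle will be the careful combinatorial accounting in the second part: there are three orientations of triangle inequalities and, for a fixed block $(i,j)$ and fixed entry $(t,q)$, one must correctly enumerate which triangles (over all third objects $k$ and all choices of the inner index) are tight at $\bar X$ and with what multiplier they contribute to the reduced cost of $X_{tq}(i,j)$. Getting the cross-terms right — in particular the asymmetry between the ``$i\to j$'' and ``$j\to i$'' directions, which is why both $\delta^{i\to j}_{t\to q}$ and $\delta^{j\to i}_{q\to t}$ appear — and verifying that the diagonal reduced-cost equations are mutually consistent (i.e., that the system for $\lambda,\mu$ is solvable, using $n\geq 3$ and $d\geq 2$) is the delicate step. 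Once the dual variables are shown to be well-defined, feasible, and reduced-cost-nonnegative under~\eqref{cond1}--\eqref{cond5}, weak LP duality gives that $\bar X$ is optimal, completing the proof of the lemma; uniqueness is deferred to the subsequent Lemma~\ref{unique}.
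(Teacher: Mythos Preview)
Your plan is essentially the paper's proof: construct an explicit dual certificate for~\eqref{primal}, impose complementary slackness at $\bar X$ to zero out the triangle multipliers on non-tight constraints, use the diagonal equations to solve for the row/column duals, and reduce the off-diagonal reduced-cost nonnegativity to the five sign cases~\eqref{cond1}--\eqref{cond5}. One refinement worth flagging: the triangle multipliers are not merely ``proportional to $1/\alpha$ or $1/(\beta d)$'' but are \emph{data-dependent}---the paper activates them only on blocks with $a_{tt}(i,j)=-1$ and sets each to $\tfrac{1}{n}\max\{\,\cdot\,,0\}$ of a signed combination of $a/\alpha$ and $a/(\beta d)$ terms (so nonnegativity is automatic), and it is this $\max$ that produces the $\min$ inside $\kappa_t(i,j)$ after summing.
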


\begin{proof}
We start by constructing the dual of Problem~\eqref{primal}: define dual variables $r_t(i,j)$ for all $t \in [d]$ (resp. $c_q(i,j)$ for all $q \in [d]$ ) and for all $1 \leq i < j \leq n$ associated with equalities~\eqref{rsum} (resp. equalities~\eqref{csum}). Define dual variables $\lambda_{ltq}^1 (i,j,k)$, $\lambda_{ltq}^2 (i,j,k)$, $\lambda_{ltq}^3 (i,j,k)$ for all $l,t,q \in [d]$ and for all $1 \leq i < j < k \leq n$ for the first, the second and the third inequalities of system~\eqref{tri}, respectively.
Finally, for inequalities~\eqref{nonnega}, define dual variables
$\mu_{tq} (i,j)$ for all $t,q \in [d]$ and for all $1 \leq i < j \leq n$. It then follows that the dual of Problem~\eqref{primal} is given by:
\begin{align}\label{dual}
\tag{D}
\max \quad & -\sum_{1\leq i < j < k \leq n}{\sum_{l,t,q \in [d]}{\Big(\lambda_{ltq}^1 (i,j,k) + \lambda_{ltq}^2 (i,j,k)+ \lambda_{ltq}^3 (i,j,k)\Big)}}\nonumber\\
          &-\sum_{1\leq i < j \leq n}{\sum_{t \in [d]}{\Big(r_t(i,j)+c_t(i,j)\Big)}}\nonumber\\
\st \quad  & a_{tq}(i,j)+r_t(i,j)+c_q(i,j)-\mu_{tq} (i,j)+ \sum_{l \in [d]}\Big(\sum_{\substack{k \in [n]:\\ k <i}}{\big(\lambda_{ltq}^1 (k,i,j)-\lambda_{ltq}^2 (k,i,j)+\lambda_{ltq}^3 (k,i,j)\big)}  \nonumber\\
 +&\sum_{\substack{k\in [n]: \\ i < k < j}}{\big(-\lambda_{tlq}^1 (i,k,j)+\lambda_{tlq}^2 (i,k,j)+\lambda_{tlq}^3 (i,k,j)\big)} \nonumber\\
 +& \sum_{\substack{k\in [n]: \\ k>j}}{\big(\lambda_{tql}^1 (i,j,k)+\lambda_{tql}^2 (i,j,k)- \lambda_{tql}^3 (i,j,k)\big)}\Big)= 0, \quad \forall t,q \in [d], \forall 1 \leq i < j \leq n,\label{dineq} \\
& \lambda_{ltq}^1 (i,j, k) \geq 0, \; \lambda_{ltq}^2 (i,j,k) \geq 0, \; \lambda_{ltq}^3 (i,j,k) \geq 0, \quad  \forall l,t,q \in [d], \;  \forall 1 \leq i < j < k \leq n, \nonumber\\
& \mu_{tq} (i,j) \geq 0, \quad \forall t,q \in [d], \; \forall 1 \leq i < j \leq n.\label{munon}
\end{align}
To prove the statement,
it suffices to construct a dual feasible point $(\bar r, \bar c, \bar \lambda^1, \bar \lambda^2, \bar \lambda^3, \bar \mu)$ that satisfies complementary slackness, \ie
for all $1\leq i < j < k \leq n$ and all $l,t,q \in [d]$,
we have
\begin{itemize}
\item $\bar\lambda^1_{ltq} (i,j,k) =0$ if $l \neq t$ and $t \neq q$,
\item $\bar\lambda^2_{ltq} (i,j,k) =0$ if $l \neq t$ and $l \neq q$,
\item $\bar\lambda^3_{ltq} (i,j,k) =0$ if $l \neq q$ and $t \neq q$,
\end{itemize}
and $\bar \mu_{tt} (i,j) = 0$ for all $1 \leq i < j \leq n$ and for all $t \in [d]$.

Moreover, to construct the dual certificate, we make the following simplifications:
\begin{itemize}
\item  $\bar \lambda^1_{ttt}(i,j,k) = \bar \lambda^2_{ttt}(i,j,k) = \bar \lambda^3_{ttt}(i,j,k) = 0$, for all $t \in [d]$ an for all $1\leq i < j < k \leq n$,
\item $\bar r_t(i,j) = \bar c_t(i,j)$ for all $t \in [d]$ and for all $1 \leq i < j \leq n$.
\end{itemize}
Hence by complementary slackness and our simplifications stated above, for each $1 \leq i < j \leq n$, if $t = q$, constraint~\eqref{dineq} simplifies to
\begin{equation}\label{s1}
a_{tt}(i,j)+  2\bar r_t(i,j) + \Sigma_t (i,j)=0,
\end{equation}
where we define
\begin{align}\label{sigma}
\Sigma_t(i,j) := &\sum_{l \in [d] \setminus \{t\}}\Big(\sum_{k <i}{(\bar\lambda_{ltt}^1 (k,i,j)+\bar\lambda_{ltt}^3 (k,i,j))}
 +\sum_{i < k < j}{(\bar\lambda_{tlt}^2 (i,k,j)+\bar\lambda_{tlt}^3 (i,k,j))}\nonumber\\
& + \sum_{k>j}{(\bar\lambda_{ttl}^1 (i,j,k)+\bar\lambda_{ttl}^2 (i,j,k))}\Big),
\end{align}
and for each $1 \leq i < j \leq n$, if $t \neq q$, using inequalities~\eqref{munon} to project out $\mu_{tq}(i,j)$, constraint~\eqref{dineq} simplifies to:
\begin{equation}\label{s2}
a_{tq}(i,j)+\bar r_t(i,j) +\bar r_q(i,j)+ \Delta_{tq}(i,j) \geq 0,
\end{equation}
where we define
\begin{align}\label{delta}
\Delta_{tq}(i,j) :=&\sum_{k <i}{\big(\bar\lambda_{ttq}^1 (k,i,j)-\bar\lambda_{ttq}^2 (k,i,j)+\bar\lambda_{qtq}^3 (k,i,j)-\bar\lambda_{qtq}^2 (k,i,j)\big)}\nonumber\\
+&\sum_{i < k < j}{\big(\bar\lambda_{ttq}^2 (i,k,j)-\bar\lambda_{ttq}^1 (i,k,j) +\bar\lambda_{tqq}^3 (i,k,j)-\bar\lambda_{tqq}^1 (i,k,j)\big)}\nonumber\\
+& \sum_{k>j}{\big(\bar\lambda_{tqq}^1 (i,j,k)-\bar\lambda_{tqq}^3 (i,j,k)+\bar\lambda_{tqt}^2 (i,j,k)-\bar\lambda_{tqt}^3 (i,j,k)\big)}.
\end{align}
Hence, we need to determine nonnegative $\bar\lambda^1$, $\bar\lambda^2$, $\bar\lambda^3$ satisfying\eqref{s1} and~\eqref{s2}.
To this end, for each $1 \leq i < j \leq n$ and each $t \in [d]$
such that $a_{tt}(i,j) = -1$, do the following:
\begin{itemize}
\item [-] for each $1 \leq k < i$  and for each $q \in [d] \setminus \{t\}$, let
\begin{equation}\label{ff1}
\left\{\begin{array}{ll}
\bar\lambda_{qtt}^1 (k,i,j) =&\frac{1}{n}\max\left\{\frac{a_{tq}(j,k)-a_{tq}(i,k)}{\alpha}+\frac{a_{tt}(i,k)-a_{tt}(j,k)}{\beta d},\; 0\right\},\\
\bar\lambda_{qtt}^3 (k,i,j)= &\frac{1}{n}\max\left\{\frac{a_{tq}(i,k)-a_{tq}(j,k)}{\alpha}+\frac{a_{tt}(j,k)-a_{tt}(i,k)}{\beta d}, \; 0\right\},
\end{array}\right.
\end{equation}
\item [-] for each $i < k < j$ and for each $l \in [d] \setminus \{t\}$, let
\begin{equation}\label{ff2}
\left\{\begin{array}{ll}
\bar\lambda_{tqt}^2 (i,k,j) = &\frac{1}{n}\max\left\{\frac{a_{tq}(j,k)-a_{tq}(i,k)}{\alpha}+\frac{a_{tt}(i,k)-a_{tt}(j,k)}{\beta d},\; 0\right\},\\
\bar\lambda_{tqt}^3 (i,k,j)= &\frac{1}{n}\max\left\{\frac{a_{tq}(i,k)-a_{tq}(j,k)}{\alpha}+\frac{a_{tt}(j,k)-a_{tt}(i,k)}{\beta d},\; 0\right\},
\end{array}\right.
\end{equation}
\item [-] for each $j < k\leq n$ and for each $l \in [d] \setminus \{t\}$, let
\begin{equation}\label{ff3}
\left\{\begin{array}{ll}
&\bar\lambda_{ttq}^1 (i,j,k)= \frac{1}{n}\max\left\{\frac{a_{tq}(i,k)-a_{tq}(j,k)}{\alpha}+\frac{a_{tt}(j,k)-a_{tt}(i,k)}{\beta d}, \; 0\right\}, \\
&\bar\lambda_{ttq}^2 (i,j,k) =\frac{1}{n}\max\left\{\frac{a_{tq}(j,k)-a_{tq}(i,k)}{\alpha}+\frac{a_{tt}(i,k)-a_{tt}(j,k)}{\beta d},\; 0\right\}.\end{array}
\right.
\end{equation}
\end{itemize}
Moreover, for each $1 \leq i <j \leq n$ and for each $t \in [d]$ such that $a_{tt}(i,j)= 1$, do the following:
\begin{itemize}
\item [-] for each $1\leq k < i$ and for each $q \in [d] \setminus \{t\}$, let
\begin{equation}\label{ee1}
\bar \lambda_{qtt}^1(k,i,j)= \bar \lambda_{qtt}^3(k,i,j) = 0,
\end{equation}
\item [-] for each $i <k <j$ and for each $q \in [d]\setminus \{t\}$, let
\begin{equation}\label{ee2}
\bar \lambda_{tqt}^2(i,k,j) = \bar \lambda_{tqt}^3(i,k,j) = 0,
\end{equation}
\item [-] for each $j < k \leq n$ and for each $q \in [d] \setminus \{t\}$, let
\begin{equation}\label{ee3}
\bar \lambda_{ttq}^1(i,j,k) = \bar \lambda_{ttq}^2(i,j,k) = 0.
\end{equation}
\end{itemize}
Substituting~\eqref{ff1}-\eqref{ff3} in~\eqref{sigma}, gives:
\begin{equation}\label{sigma2}
\Sigma_t(i,j) = 2-2\kappa_t(i,j), \qquad {\rm if} \; a_{tt}(i,j) =-1
\end{equation}
where $\kappa_t(i,j)$ is defined by~\eqref{kkappa}, and
substituting~\eqref{ee1}-\eqref{ee3} in~\eqref{sigma}, yields
\begin{equation}\label{sigma1.5}
\Sigma_t(i,j) = 0, \qquad {\rm if} \; a_{tt}(i,j) =1.
\end{equation}
Moreover, substituting~\eqref{ff1}-\eqref{ee3} in~\eqref{delta} gives
\begin{align}\label{delta2}
\Delta_{tq}(i,j) = \delta_{t \rightarrow q}^{i \rightarrow j} +\delta_{q \rightarrow t}^{j \rightarrow i},
\end{align}
where $\delta_{t \rightarrow q}^{i \rightarrow j}$ is defined by~\eqref{ldelta}.
Finally, substituting~\eqref{sigma2} and~\eqref{delta2} in~\eqref{s1} and~\eqref{s2}, we obtain the following five cases:
\begin{itemize}[leftmargin=*]
\item [(i)] $a_{tt}(i,j) = a_{qq}(i,j) = a_{tq}(i,j) =1$: in this case, by~\eqref{sigma1.5} we have $\Sigma_t(i,j) = \Sigma_q(i,j) = 0$ which by~\eqref{s1} gives
$\bar r_t(i,j) = \bar r_q(i,j) = -1/2$. Hence, inequality~\eqref{s2} simplifies to $\Delta_{tq}(i,j) \geq 0$. By~\eqref{delta2},
the validity of this inequality follows from condition~\eqref{cond1}.

\item [(ii)] $a_{tt}(i,j) = a_{qq}(i,j)=1$,  $a_{tq}(i,j) =-1$: in this case, by~\eqref{sigma1.5}  we have $\Sigma_t(i,j) = \Sigma_q(i,j) = 0$ and hence
by~\eqref{s1} we get $\bar r_t(i,j) = \bar r_q(i,j) = -1/2$. Hence, inequality~\eqref{s2} simplifies to $\Delta_{tq}(i,j) \geq 2$. By~\eqref{delta2},
the validity of this inequality follows from condition~\eqref{cond2}.

\item [(iii)] $a_{tt}(i,j) = a_{qq}(i,j) = -1$, $a_{tq}(i,j)= 1$: in this case by~\eqref{sigma2} we have $\Sigma_t(i,j) = 2-2\kappa_t(i,j)$ and
$\Sigma_q(i,j) = 2-2\kappa_q(i,j)$. Hence, from~\eqref{s1} it follows that $\bar r_t(i,j) = \frac{2\kappa_t(i,j)-1}{2}$
and $\bar r_q(i,j) = \frac{2\kappa_q(i,j)-1}{2}$. Then, inequality~\eqref{s2} simplifies to $\kappa_t(i,j)+\kappa_q(i,j) +\Delta_{tq}(i,j) \geq 0$.
By~\eqref{delta2} this inequality is implied by condition~\eqref{cond3}.

\item [(iv)] $a_{tt}(i,j) = 1$, $a_{qq}(i,j) = -1$, $a_{tq}(i,j)= 1$: in this case, by~\eqref{sigma1.5} we have $\Sigma_t(i,j) = 0$, and
by~\eqref{sigma2} we have $\Sigma_q(i,j) = 2-2\kappa_q(i,j)$. Hence by~\eqref{s1} we get
$\bar r_t(i,j) = -1/2$, and $\bar r_q(i,j) = \frac{2\kappa_q(i,j)-1}{2}$. Then, inequality~\eqref{s2} simplifies to $\kappa_q(i,j)+\Delta_{tq}(i,j) \geq 0$.
Finally, by~\eqref{delta2}, this inequality is equivalent to condition~\eqref{cond4}.

\item [(v)] $a_{tt}(i,j) = -1$, $a_{qq}(i,j) = 1$, $a_{tq}(i,j)= 1$: by symmetry, this case follows from case~(iv).
\end{itemize}
\end{proof}

We now provide a sufficient condition under which the ground truth $\bar X$
is the unique optimal solution of Problem~\eqref{primal}.
To this end, we use Mangasarian's characterization of uniqueness of solution in LP~\cite{Man79}:

\begin{proposition}[Part~(iv) of Theorem~2 in~\cite{Man79}]\label{mang}
Consider an LP whose feasible region is defined by $Ax =b$ and $C x \leq d$, where $x \in \R^n$ denotes the
vector of optimization variables and, $b,d, A, C$ are vectors and matrices of appropriate dimensions.
Let $\bar x$ be an optimal solution of this LP and denote by $\bar u$ the dual optimal solution corresponding to the inequality constraints.
Let $C_i$ denote
the $i$-th row of $C$. Define $K =\{i : C_i \bar x = d_i, \; \bar u_i > 0\}$, $L =\{i : C_i \bar x = d_i, \; \bar u_i = 0\}$. Let $C_K$ and $C_L$
be the
matrices whose rows are $C_i$, $i \in K$ and $C_i$, $i \in L$, respectively.
Then $\bar x$ is the unique optimal solution of the LP, if there exists no $x$ different from the zero vector satisfying
\begin{equation}\label{uc}
Ax = 0, \quad C_K x = 0, \quad C_L x \leq 0.
\end{equation}
\end{proposition}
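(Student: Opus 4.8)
The plan is to prove the uniqueness criterion by contrapositive. Assume $\bar x$ is optimal but \emph{not} the unique optimizer, so there is a second optimal solution $\hat x\neq\bar x$; the goal is to exhibit a nonzero vector solving the homogeneous system~\eqref{uc}, which contradicts the hypothesis. Take $v:=\hat x-\bar x\neq 0$. Two of the three conditions in~\eqref{uc} come for free: feasibility of $\bar x$ and $\hat x$ gives $A\bar x=A\hat x=b$, hence $Av=0$; and for every active inequality index $i\in K\cup L$ we have $C_i\bar x=d_i$ while $C_i\hat x\le d_i$, so $C_iv=C_i\hat x-d_i\le 0$, and in particular $C_Lv\le 0$.

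The substantive point is $C_Kv=0$, and here I would combine complementary slackness with the fact that $\bar x$ and $\hat x$ attain equal objective values, so that $c^Tv=0$ for the objective vector $c$. Writing LP stationarity at the primal--dual optimal pair as $c+A^T\bar y+C^T\bar u=0$, where $\bar y$ are the multipliers associated with $Ax=b$, and using $Av=0$,
\[
0=c^Tv=-\bar y^T(Av)-\bar u^T(Cv)=-\bar u^T(Cv).
\]
By complementary slackness $\bar u_i=0$ whenever $C_i\bar x<d_i$, so $\bar u^T(Cv)=\sum_{i\in K}\bar u_i\,(C_iv)$, the rows in $L$ dropping out because $\bar u_i=0$ there and the inactive rows for the same reason. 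Each summand is $\le 0$ since $\bar u_i>0$ and $C_iv\le 0$ for $i\in K$, yet they sum to zero; hence every term vanishes and $C_iv=0$ for all $i\in K$, that is, $C_Kv=0$. Thus $v$ is a nonzero solution of~\eqref{uc}, so no second optimizer can exist.

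I do not expect a genuine obstacle here: the statement is exactly Part~(iv) of Theorem~2 in~\cite{Man79}, so one may simply cite it, and the self-contained argument above is short. The only point demanding care is matching the sign conventions in the stationarity and complementary-slackness relations to the particular form $Ax=b$, $Cx\le d$ used in the statement. An equivalent route that bypasses KKT is to observe, via strong duality and complementary slackness, that the optimal face equals $\{x:\ Ax=b,\ C_Kx=d_K,\ C_ix\le d_i\ \text{for}\ i\notin K\}$, and that $\bar x$ is the unique point of this polyhedron precisely when it contains no nonzero direction $v$ with $Av=0$, $C_Kv=0$, $C_Lv\le 0$, which is the stated condition.
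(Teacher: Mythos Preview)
Your argument is correct. Note, however, that the paper does not give its own proof of this proposition: it is quoted verbatim from Mangasarian~\cite{Man79} and used as a black box in the proof of Lemma~\ref{unique}. Your contrapositive argument---taking $v=\hat x-\bar x$ for a second optimizer $\hat x$, reading off $Av=0$ and $C_{K\cup L}v\le 0$ from feasibility, and then forcing $C_Kv=0$ by combining equal objective values with the stationarity relation $c+A^T\bar y+C^T\bar u=0$ and the strict positivity of $\bar u_i$ on $K$---is the standard route and is essentially how the result is established in~\cite{Man79}. The alternative you sketch at the end, identifying the optimal face as $\{x: Ax=b,\ C_Kx=d_K,\ C_ix\le d_i\ \text{for}\ i\notin K\}$ via complementary slackness, is also valid and perhaps slightly cleaner since it avoids explicit sign bookkeeping in the KKT conditions.
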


We are now ready to establish our uniqueness result:

\begin{lemma}\label{unique}
Suppose that all inequalities in conditions~\eqref{cond1}-\eqref{cond5} are strictly satisfied.
Then $\bar X$ is the unique optimal solution of Problem~\eqref{primal}.
\end{lemma}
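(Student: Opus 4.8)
The plan is to invoke Mangasarian's uniqueness criterion (Proposition~\ref{mang}) using the very dual solution $(\bar r,\bar c,\bar\lambda^1,\bar\lambda^2,\bar\lambda^3,\bar\mu)$ constructed in the proof of Lemma~\ref{optimality}; we already know it is dual optimal and satisfies complementary slackness, so $\bar u = (\bar\lambda^1,\bar\lambda^2,\bar\lambda^3,\bar\mu)$ is a legitimate choice for the multipliers associated with the inequality constraints~\eqref{tri} and~\eqref{nonnega}. The first step is to identify which inequality constraints are active at the ground truth $\bar X$: a nonnegativity constraint $X_{tq}(i,j)\ge 0$ from~\eqref{nonnega} is tight exactly when $t\neq q$, since $\bar X_{tq}(i,j)=0$ there while $\bar X_{tt}(i,j)=1$. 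The key claim is then that for every $1\le i<j\le n$ and every $t\neq q$ the associated multiplier $\bar\mu_{tq}(i,j)$ is \emph{strictly} positive. Recalling from the proof of Lemma~\ref{optimality} that $\bar\mu_{tq}(i,j)$ equals the left-hand side of~\eqref{s2}, i.e.\ $a_{tq}(i,j)+\bar r_t(i,j)+\bar r_q(i,j)+\Delta_{tq}(i,j)$ with $\Delta_{tq}(i,j)=\delta_{t\to q}^{i\to j}+\delta_{q\to t}^{j\to i}$, one runs through the same five cases (i)--(v) as in that proof: in each case the expression for $\bar\mu_{tq}(i,j)$ collapses precisely to the quantity appearing in the corresponding condition~\eqref{cond1}--\eqref{cond5} (using the case-by-case values of $\bar r_t(i,j),\bar r_q(i,j)$ derived there), so the strict version of that condition is exactly the assertion $\bar\mu_{tq}(i,j)>0$.

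Given this, the set $K$ of Proposition~\ref{mang} contains every nonnegativity constraint indexed by a pair $t\neq q$ and every block $(i,j)$. Writing a candidate direction $x$ as a family of matrices $H(i,j)$, $1\le i<j\le n$, the requirement $C_K x=0$ forces $H_{tq}(i,j)=0$ for all $t\neq q$ and all $i<j$. Combining this with $Ax=0$, which states that every row sum of each $H(i,j)$ vanishes (equalities~\eqref{rsum}), we obtain $H_{tt}(i,j)=-\sum_{q\neq t}H_{tq}(i,j)=0$ as well. Hence $H\equiv 0$, so the only solution of system~\eqref{uc} is the zero vector, and Proposition~\ref{mang} concludes that $\bar X$ is the unique optimal solution of Problem~\eqref{primal}.

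The bookkeeping here is light; the one point requiring care is the case analysis establishing $\bar\mu_{tq}(i,j)>0$, which amounts to reading off, case by case, the already-computed slack in~\eqref{s2} from the proof of Lemma~\ref{optimality} and matching it term-by-term with conditions~\eqref{cond1}--\eqref{cond5}. I do not expect a genuine obstacle: once the identification of $\bar\mu_{tq}(i,j)$ with the left-hand side of~\eqref{s2} is in place, the triangle-inequality multipliers $\bar\lambda$ and the constraints placed in $L$ are irrelevant, since the nonnegativity constraints in $K$ together with the row-sum equalities already annihilate every admissible direction. (An essentially equivalent route avoids Mangasarian and argues directly: if $X$ is optimal and $H=X-\bar X\neq 0$, complementary slackness with the positive multipliers $\bar\mu_{tq}(i,j)$ forces $H_{tq}(i,j)=0$ for all $t\neq q$, and the row-sum equalities finish the argument; the Mangasarian formulation is just cleaner to state.)
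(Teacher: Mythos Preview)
Your proposal is correct and follows essentially the same route as the paper: invoke Mangasarian's criterion with the dual certificate built in Lemma~\ref{optimality}, observe that the strict versions of \eqref{cond1}--\eqref{cond5} force $\bar\mu_{tq}(i,j)>0$ for all $t\neq q$ (exactly via the case analysis (i)--(v) and the identification of $\bar\mu_{tq}(i,j)$ with the slack in~\eqref{s2}), and then note that these active nonnegativity constraints in $K$ together with the row-sum equalities~\eqref{rsum} annihilate every admissible direction. One small wording caveat: in cases (i), (ii), (iii) the conditions split into two inequalities and $\bar\mu_{tq}(i,j)$ matches their \emph{sum} rather than each term ``precisely,'' but the implication you need (strict conditions $\Rightarrow \bar\mu_{tq}(i,j)>0$) still goes through.
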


\begin{proof}
Consider the dual certificate $(\bar r, \bar c, \bar \lambda^1, \bar \lambda^2, \bar \lambda^3, \bar \mu)$ constructed in Lemma~\ref{optimality}; to achieve uniqueness, we perturb this point by requiring $\bar \mu_{tq}(i,j) > 0$, for all $t \neq q \in [d]$ and for all $1 \leq i < j \leq n$; this can be enforced by replacing inequalities~\eqref{s2} by strict inequalities, \ie
$$a_{tq}(i,j)+\bar r_t(i,j) +\bar r_q(i,j)+ \Delta_{tq}(i,j) > 0, \quad \forall t \neq q \in [d], \quad \forall 1 \leq i < j \leq n.$$
It can be checked that the above condition is satisfied if inequalities~\eqref{cond1}-\eqref{cond5} are strictly satisfied.
Hence it remains to show that if $\bar \mu_{tq}(i,j) > 0$, for all $t \neq q$ and for all $1 \leq i < j \leq n$, then $\bar X$ is the unique optimal solution. To this end, we make use of Proposition~\ref{mang}; suppose that $\bar X$ is not the unique optimal solution
of Problem~\eqref{primal}. Then there exists $\tilde X$ not identically zero, satisfying condition~\eqref{uc}. Since $\bar \mu_{tq}(i,j) > 0$, from condition~\eqref{uc}, it follows that
$\tilde X_{tq}(i,j) = 0$ for all $t \neq q \in [d]$ and for all $1 \leq i < j \leq n$. Moreover, by condition~\eqref{uc}, we must have $\sum_{q \in [d]}{\tilde X_{tq}(i,j)} = 0$
for all $t \in [d]$. Therefore, $\tilde X_{tt}(i,j) = 0$
for all $t \in [d]$ and for all $1 \leq i < j \leq n$, which contradicts with the assumption that $\tilde X$ must have a nonzero element. Hence $\bar X$ is the unique optimal solution
of Problem~\eqref{primal}.
\end{proof}

\subsection{Recovery under the random corruption model}
We now consider the random corruption model~\eqref{unifmodel}; we prove that our deterministic recovery condition given by conditions~\eqref{cond1}-\eqref{cond5} implies that
under the random corruption model,  Problem~\eqref{primal} recovers the ground truth with high probability, if $p_{\rm true} > 0.585$.
In this section, for an event $A$, we denote by $\prob(A)$ the probability of $A$ with respect to the random corruption model.
For a random variable $Y$, we denote by $\avg[Y]$ the expected value of $Y$ with respect to the random corruption model.
We denote by $A^1-A^5$ the events that conditions~\eqref{cond1}-\eqref{cond5} are strictly satisfied, respectively.
We denote by $A_{\rm recovery}$ the event that Problem~\eqref{primal} recovers the ground truth under the random corruption model; then by Lemma~\ref{unique} we have
$$
\prob(A_{\rm recovery}) \geq \prob\Big(A^1 \cap A^2 \cap A^3 \cap A^4 \cap A^5\Big).
$$
Since $A_{\rm recovery}$ contains the intersection of a
constant number of events $A^i$, to prove recovery with high probability, it suffices to prove that
each $A^i$, $i \in \{1,\ldots 5\}$ occurs with high probability. We establish this in the next four lemmata.

\begin{lemma}\label{MasterOfProbability1}
Suppose that $X^{\rm in}(i,j)$ for all $1 \leq i < j \leq n$ are generated according to the random corruption model \eqref{unifmodel}.
If
\begin{equation}\label{a1}\tag{A1}
p_{\rm true} \geq \frac{\beta}{\alpha+\beta},
\end{equation}
and $d \in o(\sqrt{n/\log(n)})$, then event $A^1$ occurs with high probability.
\end{lemma}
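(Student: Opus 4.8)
The plan is to express event $A^1$ as the complement of a union, over all $1 \le i < j \le n$ and $t \ne q \in [d]$, of the ``bad'' events
$$\{a_{tt}(i,j)=a_{qq}(i,j)=a_{tq}(i,j)=1\}\cap\Big(\{\delta_{t\rightarrow q}^{i\rightarrow j}\le 0\}\cup\{\delta_{q\rightarrow t}^{j\rightarrow i}\le 0\}\Big),$$
to bound the probability of each by $e^{-\Omega(n/d^2)}$, and to finish with a union bound. The first step is a measurability observation. On the premise of~\eqref{cond1} we have $a_{tq}(i,j)=a_{tt}(i,j)=1$, so substituting $a_{rs}(k,l)=1-2X^{\rm in}_{rs}(k,l)$ each summand of $\delta_{t\rightarrow q}^{i\rightarrow j}$ collapses to $W_k:=-\tfrac2\alpha X^{\rm in}_{tq}(k,j)+\tfrac2{\beta d}X^{\rm in}_{tt}(k,j)$, and $\delta_{t\rightarrow q}^{i\rightarrow j}=\tfrac1n\sum_{k:\,X^{\rm in}_{tt}(k,i)=1}W_k$ depends on the input only through the pairs $(k,i),(k,j)$, $k\ne i,j$; in particular its law is unchanged upon conditioning on the premise, which is measurable with respect to $X^{\rm in}(i,j)$. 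Thus $\delta_{t\rightarrow q}^{i\rightarrow j}$ is $\tfrac1n$ times a sum of $n-2$ independent variables $V_k:=\mathbf 1[X^{\rm in}_{tt}(k,i)=1]\,W_k$, each bounded in absolute value by $M:=2/\alpha+2/\beta$ (using that row $t$ of $X^{\rm in}(k,j)$ has at most one $1$, so $X^{\rm in}_{tq}(k,j)X^{\rm in}_{tt}(k,j)=0$); the same holds for $\delta_{q\rightarrow t}^{j\rightarrow i}$ by symmetry.

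Second, I would compute $\avg[V_k]$ under~\eqref{unifmodel}. Because $X^{\rm in}_{tt}(k,i)$, $X^{\rm in}_{tq}(k,j)$, $X^{\rm in}_{tt}(k,j)$ live on three distinct, independent pairs, $\avg[V_k]=\prob\big(X^{\rm in}_{tt}(k,i)=1\big)\big(-\tfrac2\alpha\avg[X^{\rm in}_{tq}(k,j)]+\tfrac2{\beta d}\avg[X^{\rm in}_{tt}(k,j)]\big)$, and $\avg[X^{\rm in}_{tq}(k,j)]=(1-p_{\rm true})/d$ off the diagonal while $\avg[X^{\rm in}_{tt}(k,j)]=p_{\rm true}+(1-p_{\rm true})/d$ on it. A short computation shows the parenthesized factor equals $\tfrac2d\big(\tfrac{(\alpha+\beta)p_{\rm true}-\beta}{\alpha\beta}+\tfrac{1-p_{\rm true}}{\beta d}\big)$, which is strictly positive precisely because~\eqref{a1} holds (and $p_{\rm true}<1$). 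Hence $\avg[\delta_{t\rightarrow q}^{i\rightarrow j}]=\tfrac{n-2}{n}\avg[V_k]\ge c_d>0$, with $c_d$ of order $1/d$ when $p_{\rm true}$ is bounded away from $\beta/(\alpha+\beta)$ (and of order $1/d^2$ at equality).

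Third, since $\delta_{t\rightarrow q}^{i\rightarrow j}-\avg[\delta_{t\rightarrow q}^{i\rightarrow j}]$ is an average of $n-2$ independent, mean-zero, $[-M,M]$-bounded terms, Hoeffding's inequality gives
$$\prob\big(\delta_{t\rightarrow q}^{i\rightarrow j}\le 0\big)\le \prob\big(\delta_{t\rightarrow q}^{i\rightarrow j}-\avg[\delta_{t\rightarrow q}^{i\rightarrow j}]\le -c_d\big)\le \exp\!\Big(-\tfrac{2(n-2)c_d^2}{M^2}\Big)=e^{-\Omega(n/d^2)},$$
and identically for $\delta_{q\rightarrow t}^{j\rightarrow i}$. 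A union bound over the $O(n^2d^2)$ indices then yields $\prob(A^1\text{ fails})\le O(n^2d^2)\,e^{-\Omega(n/d^2)}$, which tends to $0$ exactly because $d\in o(\sqrt{n/\log n})$ forces $n/d^2=\omega(\log n)$ to dominate $\log(n^2d^2)$.

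The main obstacle is that $\avg[\delta_{t\rightarrow q}^{i\rightarrow j}]$ is tiny --- only $\Theta(1/d)$ --- so the concentration must be strong enough both to push $\delta_{t\rightarrow q}^{i\rightarrow j}$ safely past $0$ and to absorb the $\log(n^2d^2)$ loss from the union bound; tracking the constants and checking that $d=o(\sqrt{n/\log n})$ is the scaling that makes the Hoeffding exponent $n c_d^2/M^2\asymp n/d^2$ win is the delicate part. A secondary point is the bookkeeping that justifies removing the conditioning on the premise at no cost, which hinges on the fact that $\delta_{t\rightarrow q}^{i\rightarrow j}$ sees the pair $(i,j)$ only through $a_{tt}(i,j)$ and $a_{tq}(i,j)$, both pinned to $1$ by that premise.
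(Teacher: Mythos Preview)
Your proposal is correct and follows essentially the same approach as the paper: define the summands of $\delta_{t\rightarrow q}^{i\rightarrow j}$ as independent bounded random variables indexed by $k$, compute their common mean (which is $\Theta(1/d)$ and positive under~\eqref{a1}), apply Hoeffding's inequality, and take a union bound over the $O(n^2d^2)$ indices, with the growth condition $d\in o(\sqrt{n/\log n})$ making the exponent dominate. The paper's variables $Y^1_{t,q,k}(i,j)$ and $Y^2_{t,q,k}(i,j)$ are exactly your $V_k$'s (up to a harmless $(n-2)/n$ normalization), and its expectation $\epsilon_1(d)=g_1g_2$ is the same quantity you compute; your handling of the conditioning on the premise is in fact slightly more explicit than the paper's, which simply proves the unconditional event $\bigcap_{t,q,i,j}\{\delta>0\}$ and thereby sidesteps the issue.
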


\begin{proof}
For each $1 \leq i < j \leq n$, for each $t,q \in [d]$, and for each $k \in [n] \setminus\{i,j\}$ define:
\begin{equation*}\label{y1}
Y^1_{t,q,k}(i,j):=\frac{n-2}{n}\begin{cases}
      \frac{a_{tq}(k,j)-1}{\alpha}+\frac{1-a_{tt}(k,j)}{\beta d} &  \text{if $a_{tt}(k,i)=-1$}\\
     0 & \text{otherwise,}
       \end{cases}
\end{equation*}
and
\begin{equation*}\label{y2}
Y^2_{t,q,k}(i,j):=\frac{n-2}{n}\begin{cases}
      \frac{a_{tq}(i,k)-1}{\alpha}+\frac{1-a_{qq}(i,k)}{\beta d} &  \text{if $a_{qq}(k,j)= -1$}\\
     0 & \text{otherwise.}
       \end{cases}
\end{equation*}
It then follows that event $A^1$ occurs, \ie condition~\eqref{cond1} is strictly satisfied, if for all $t,q \in [d]$ and
all $1 \leq i < j \leq n$ with $a_{tt}(i,j) = a_{qq}(i,j) = a_{tq}(i,j) = 1$, we have
\begin{align}
&\frac{1}{n-2}\sum_{k \in [n] \setminus\{i,j\}}{Y^1_{t,q,k}(i,j)} > 0, \label{set1}\\
&\frac{1}{n-2}\sum_{k \in [n] \setminus\{i,j\}}{Y^2_{t,q,k}(i,j)} > 0. \label{set2}
\end{align}
We start by showing that under assumption~\eqref{a1}, the following inequalities hold for every $t,q \in [d]$ and for every $1 \leq i < j \leq n$:
\begin{equation}\label{2222}
\epsilon_1(d):=\avg\Big[\frac{1}{n-2}\sum_{k \in [n] \setminus\{i,j\}}Y^1_{t,q,k}(i,j)\Big]>0, \quad \epsilon_2(d):=\avg\Big[\frac{1}{n-2}\sum_{k \in [n] \setminus\{i,j\}}Y^2_{t,q,k}(i,j)\Big]>0. 
\end{equation}
By direct computation we get that for every $t,q \in [d]$ and for every $1 \leq i < j \leq n$:
\begin{align*}
\epsilon_1(d) =& \avg\Big[\frac{1}{n-2}\sum_{k \in [n] \setminus\{i,j\}}Y^1_{t,q,k}(i,j)\Big]=\avg\Big[\frac{1}{n-2}\sum_{k \in [n] \setminus\{i,j\}}Y^2_{t,q,k}(i,j)\Big]= \epsilon_2(d)\\
= &\frac 2d \Big(p_{\rm true}+\frac{1-p_{\rm true}}{d} \Big) \Big(\frac{p_{\rm true}}{\beta} - (1-p_{\rm true}) (\frac{1}{\alpha}-\frac{1}{\beta d})\Big)= g_1 g_2,
\end{align*}
where $g_1 = \frac 2d (p_{\rm true}+\frac{1-p_{\rm true}}{d} )$ and $g_2 = \frac{p_{\rm true}}{\beta} - (1-p_{\rm true}) (\frac{1}{\alpha}-\frac{1}{\beta d})$.
Clearly, $g_1 > 0$ for all $d$ and $p_{\rm true}$. Moreover, $g_2$ is strictly decreasing in $d$ since its
derivative with respect
to $d$ is $-\frac{(1-p_{\rm true})}{\beta d^2}$. Therefore, to show that $g_2> 0$ for every $d$, it is enough to show that
$$\frac{p_{\rm true}}{\beta} -\frac{1-p_{\rm true}}{\alpha}=\lim_{d\to \infty}\frac{p_{\rm true}}{\beta} - (1-p_{\rm true}) (\frac{1}{\alpha}-\frac{1}{\beta d}) \geq 0,$$
which follows from assumption~\eqref{a1}.  Hence, $\epsilon_1(d)$ and $\epsilon_2(d)$
can be written as a product of two positive functions for every $d$ and this completes the proof of inequalities~\eqref{2222}.

We now show that event $A^1$ occurs with high probability. Let $A^1 = A^1_1 \cap A^1_2$, where $A^1_1$ (resp. $A^1_2$)
denotes the event that inequalities~\eqref{set1} (resp. inequalities~\eqref{set2}) are satisfied. To show $A^1$ occurs with high probability, it suffices to show that
$A^1_1$ and $A^1_2$ occur with high probability. In the following, we prove that $A^1_1$ occurs with high probability. The proof for $A^1_2$ follows from a similar line of arguments.
\begin{equation*}
\begin{split}
& \prob(A^1_1)\\
\geq & \prob \Bigg( \bigcap_{t,q,i,j}\Big\{\frac{1}{n-2}\sum_{k \in [n] \setminus \{i,j\}}Y^1_{t,q,k}(i,j) > 0\Big\}\Bigg ) \\
= &\prob \Bigg (\bigcap_{t,q,i,j}\Big\{\frac{1}{n-2}\sum_{k \in [n] \setminus \{i,j\}}Y^1_{t,q,k}(i,j)-\avg\Big[\frac{1}{n-2}\sum_{k \in [n] \setminus \{i,j\}}Y^1_{t,q,k}(i,j)\Big] >-\epsilon_1(d)\Big\}\Bigg) \\
\geq &\prob \Bigg (\bigcap_{t,q,i,j}\Big\{\Big|\frac{1}{n-2}\sum_{k \in [n] \setminus \{i,j\}}Y^1_{t,q,k}(i,j)-\avg\Big[\frac{1}{n-2}\sum_{k \in [n] \setminus \{i,j\}}Y^1_{t,q,k}(i,j)\Big]\Big| < \epsilon_1(d)\Big\}\Bigg ) \\
=& 1- \prob \Bigg(\bigcup_{t,q,i,j}\Big\{\Big|\frac{1}{n-2}\sum_{k \in [n] \setminus \{i,j\}}Y^1_{t,q,k}(i,j)-\avg\Big[\frac{1}{n-2}\sum_{k \in [n] \setminus \{i,j\}}Y^1_{t,q,k}(i,j)\Big]\Big| \geq \epsilon_1(d)\Big\}\Bigg) \\
\geq & 1 - \sum_{t,q,i,j}{\prob\Bigg(\Big|\frac{1}{n-2}\sum_{k \in [n] \setminus \{i,j\}}Y^1_{t,q,k}(i,j)-\avg\Big[\frac{1}{n-2}\sum_{k \in [n] \setminus \{i,j\}}Y^1_{t,q,k}(i,j)\Big]\Big| \geq \epsilon_1(d) \Bigg)}\\
\geq & 1- 2d^2(n-2)^2\exp\Big(-\frac{\epsilon_1^2(d) (n-2)}{8}\Big),
\end{split}
\end{equation*}
where the second and  fourth lines follow by set inclusion and the sixth line follows from the union bound. The last line follows from the application of Hoeffding's inequality~\cite{VerBookHDP} by noting that for each $(t,q,i,j)$ the random variables $Y^1_{t,q,k}(i,j)$ are independent  for all $k \in [n] \setminus \{i,j\}$ and can be bounded as $|Y^1_{t,q,k}(i,j)|\leq 2$. Since $\epsilon_1(d) \in \Theta(\frac{1}{d})$
and by assumption $d \in o(\sqrt{n/\log(n)})$, we get
$$2d^2(n-2)^2\exp\Big(-\frac{\epsilon_1^2(d) (n-2)}{8}\Big) \rightarrow 0$$
as $n \rightarrow \infty$.
\end{proof}

\begin{lemma}\label{MasterOfProbability2}
Suppose that $X^{\rm in}(i,j)$ for all $1 \leq i < j \leq n$ are generated according to the random corruption model \eqref{unifmodel}.
If
\begin{equation}\label{a2}\tag{A2}
p_{\rm true} \geq \frac{\alpha}{2},
\end{equation}
then  event $A^2$ occurs with high probability.
\end{lemma}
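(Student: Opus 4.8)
The plan is to show that each of the $O(d^2n^2)$ inequalities constituting event $A^2$ holds with overwhelming probability and then to conclude by a union bound, in the same spirit as the proof of Lemma~\ref{MasterOfProbability1}. First I would exploit a sign argument specific to condition~\eqref{cond2}. Fix $t,q\in[d]$ and $1\le i<j\le n$ with $a_{tt}(i,j)=a_{qq}(i,j)=1$ and $a_{tq}(i,j)=-1$; this conditioning constrains only $X^{\rm in}(i,j)$, whereas $\delta_{t\to q}^{i\to j}$ in~\eqref{ldelta} depends solely on the matrices $X^{\rm in}(k,i),X^{\rm in}(k,j)$ with $k\in[n]\setminus\{i,j\}$, which are independent of $X^{\rm in}(i,j)$. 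Because $a_{tq}(i,j)=-1$ and $a_{tt}(i,j)=1$, the two summands $\tfrac{a_{tq}(k,j)+1}{\alpha}$ and $\tfrac{1-a_{tt}(k,j)}{\beta d}$ in~\eqref{ldelta} are nonnegative, so
\[
\delta_{t\to q}^{i\to j}\ \ge\ \frac{2}{\alpha n}\,W,\qquad W:=\#\bigl\{k\in[n]\setminus\{i,j\}:\ a_{tt}(k,i)=-1,\ a_{tq}(k,j)=1\bigr\},
\]
and $\delta_{q\to t}^{j\to i}$ obeys the analogous bound with the count $\#\{k:\ a_{qq}(k,j)=-1,\ a_{qt}(k,i)=1\}$ (using $a(j,i)=a^T(i,j)$). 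It therefore suffices to lower bound these two counts.

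Next I would determine the law of $W$. Under the model~\eqref{unifmodel}, $\prob(a_{tt}(k,i)=-1)=\prob(X^{\rm in}_{tt}(k,i)=1)=p_{\rm true}+\tfrac{1-p_{\rm true}}{d}$ and, for $t\neq q$, $\prob(a_{tq}(k,j)=1)=1-\tfrac{1-p_{\rm true}}{d}$; since these two events concern the independent matrices $X^{\rm in}(k,i)$ and $X^{\rm in}(k,j)$, and the indicator terms for distinct $k$ involve disjoint pairs of objects, $W$ is a sum of $n-2$ i.i.d.\ Bernoulli variables with parameter $\rho=(p_{\rm true}+\tfrac{1-p_{\rm true}}{d})(1-\tfrac{1-p_{\rm true}}{d})$. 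A one-line computation gives $\rho\ge p_{\rm true}+\tfrac{(1-p_{\rm true})^2}{2d}$ whenever $d\ge2$ (the case $p_{\rm true}=1$ being trivial). Hence, under assumption~\eqref{a2}, $\avg\bigl[\tfrac{2}{\alpha n}W\bigr]=\tfrac{2(n-2)}{\alpha n}\rho\ge 1+\eta$ for all large $n$, where $\eta:=\tfrac{(1-p_{\rm true})^2}{2\alpha d}>0$; the same lower bound applies to the symmetric count.

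Finally I would pass from the mean to the tail. By the Chernoff/Hoeffding inequality~\cite{VerBookHDP}, $\prob(W\le\tfrac{\alpha n}{2})\le\exp(-c\eta^2 n)$ for a constant $c>0$, with the same estimate for the symmetric count; summing these over the $O(d^2 n^2)$ inequalities in $A^2$ bounds the failure probability by $O\bigl(d^2 n^2\exp(-c\eta^2 n)\bigr)$, which vanishes as $n\to\infty$ because $\eta=\Theta(1/d)$ and $d\in o(\sqrt{n/\log n})$ under the standing hypothesis $d\in o((n/\log n)^{1/4})$ of Theorem~\ref{main}. Thus $A^2$ occurs with high probability. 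The delicate case is the boundary $p_{\rm true}=\alpha/2$, where $\eta$ is only of order $1/d$ rather than a constant: one then genuinely relies on the finite-$d$ surplus in $\rho$ (hence on $d\ge2$) and on the controlled growth of $d$; away from this boundary the margin is $\Theta(1)$ and the concentration step is entirely routine.
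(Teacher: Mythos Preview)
Your argument is correct. It follows the same high-level scheme as the paper (write each inequality in $A^2$ as a sum of i.i.d.\ bounded terms over $k\in[n]\setminus\{i,j\}$, lower bound the mean, apply Hoeffding, then union bound over $O(d^2n^2)$ indices), but it reaches the mean bound by a different and shorter path. The paper defines auxiliary variables $Y^3_{t,q,k},Y^4_{t,q,k}$, computes the exact expectation
\[
\epsilon_3(d)=2\Big(p_{\rm true}+\tfrac{1-p_{\rm true}}{d}\Big)\Big(\tfrac{p_{\rm true}}{\beta d}-\tfrac{1-p_{\rm true}}{d}(\tfrac{1}{\alpha}-\tfrac{1}{\beta d})+\tfrac{1}{\alpha}\Big)-1,
\]
shows it is decreasing in $d$, and passes to the limit $d\to\infty$ to obtain $\epsilon_3(d)>\tfrac{2p_{\rm true}}{\alpha}-1\ge 0$ under~\eqref{a2}. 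You instead exploit the specific signs forced by the conditioning in~\eqref{cond2}: since $a_{tq}(i,j)=-1$ and $a_{tt}(i,j)=1$, both summands in~\eqref{ldelta} are nonnegative, so $\delta_{t\to q}^{i\to j}\ge \tfrac{2}{\alpha n}W$ with $W$ a binomial count. This bypasses the derivative computation entirely and makes the role of $p_{\rm true}\ge \alpha/2$ transparent: it is exactly the condition $\tfrac{2}{\alpha}\,p_{\rm true}\ge 1$, with the finite-$d$ surplus in $\rho$ providing the strict margin.

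What each approach buys: the paper's exact expectation reveals that away from the boundary $\epsilon_3(d)=\Theta(1)$, so Lemma~\ref{MasterOfProbability2} alone imposes no growth restriction on $d$; your sign bound throws away the $\tfrac{1-a_{tt}(k,j)}{\beta d}$ term and yields only a $\Theta(1/d)$ margin at the boundary $p_{\rm true}=\alpha/2$, forcing you to invoke $d\in o(\sqrt{n/\log n})$ from Theorem~\ref{main}. This is harmless in context (that hypothesis is already in force), and you are right that the boundary case is the only place it matters. Conversely, your shortcut is cleaner and more robust: it avoids the somewhat delicate monotonicity-in-$d$ step and makes independence from the conditioning on $X^{\rm in}(i,j)$ explicit.
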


\begin{proof}
For each $1 \leq i < j \leq n$, for each $t,q \in [d]$ and for each $k \in [n] \setminus\{i,j\}$ define:
\begin{equation*}\label{y3}
Y^3_{t,q,k}(i,j):=\frac{n-2}{n}\begin{cases}
     \frac{a_{tq}(k,j)+1}{\alpha}+\frac{1-a_{tt}(k,j)}{\beta d}-1 &  \text{if $a_{tt}(k,i)=-1$}\\
     -1 & \text{otherwise,}
       \end{cases}
\end{equation*}
and
\begin{equation*}\label{y4}
Y^4_{t,q,k}(i,j):=\frac{n-2}{n}\begin{cases}
     \frac{a_{tq}(i,k)+1}{\alpha}+\frac{1-a_{qq}(i,k)}{\beta d}-1 &  \text{if $a_{qq}(k,j)= -1$}\\
     -1 & \text{otherwise.}
       \end{cases}
\end{equation*}
It can be checked that event $A^2$ occurs, \ie condition~\eqref{cond2} is strictly satisfied, if for all $t,q \in [d]$ and
all $1 \leq i < j \leq n$ with $a_{tt}(i,j) = a_{qq}(i,j) = 1, a_{tq}(i,j) = -1$, we have
\begin{align}
&\frac{1}{n-2}\sum_{k \in [n] \setminus\{i,j\}}{Y^3_{t,q,k}(i,j)} > 0, \label{set3}\\
&\frac{1}{n-2}\sum_{k \in [n] \setminus\{i,j\}}{Y^4_{t,q,k}(i,j)} > 0. \label{set4}
\end{align}
We start by proving that under assumption~\eqref{a2},
the following inequalities hold for every $t,q \in [d]$ and for every $1 \leq i < j \leq n$:
\begin{equation}\label{2222a}
\epsilon_3(d):=\avg\Big[\frac{1}{n-2}\sum_{k \in [n] \setminus\{i,j\}} Y^3_{t,q,k}(i,j)\Big]>0, \quad \epsilon_4(d):=\avg\Big[\frac{1}{n-2}\sum_{k \in [n] \setminus\{i,j\}} Y^4_{t,q,k}(i,j)\Big]>0.
\end{equation}
By direct computation we get that for every $t,q \in [d]$ and for every $1 \leq i < j \leq n$:
\begin{align*}
\epsilon_3(d) =&  \avg\Big[\frac{1}{n-2}\sum_{k \in [n] \setminus\{i,j\}} Y^3_{t,q,k}(i,j)\Big]=\avg\Big[\frac{1}{n-2}\sum_{k \in [n] \setminus\{i,j\}} Y^4_{t,q,k}(i,j)\Big]= \epsilon_4(d) \\
=& 2\Big(p_{\rm true}+\frac{1-p_{\rm true}}{d} \Big)\Big(\frac{p_{\rm true}}{\beta d}-\frac{1-p_{\rm true}}{d}(\frac{1}{\alpha}-\frac{1}{\beta d})+\frac{1}{\alpha}\Big) - 1.
\end{align*}
It can be checked that the derivative of the above expression with respect to $d$ is given by
$-\frac{2(1-p_{\rm true})^2}{\alpha d^2}(1-\frac{2}{d})- \frac{2p_{\rm true}^2}{\beta d^2}-8\frac{p_{\rm true}(1-p_{\rm true})}{\beta d^3}-6\frac{(1-p_{\rm true})^2}{\beta d^4}$,
which is negative for all $0\leq p_{\rm true} \leq 1$ and $d \geq 2$.
Hence it is enough to show that
$$\frac{2p_{\rm true}}{\alpha} -1=\lim_{d\to \infty}2\Big(p_{\rm true}+\frac{1-p_{\rm true}}{d} \Big)\Big(\frac{p_{\rm true}}{\beta d}-\frac{1-p_{\rm true}}{d}(\frac{1}{\alpha}-\frac{1}{\beta d})+\frac{1}{\alpha}\Big) - 1 \geq 0,$$
which follows from assumption~\eqref{a2}.

We now show that event $A^2$ occurs with high probability. Let $A^2 = A^2_1 \cap A^2_2$, where $A^2_1$ (resp. $A^2_2$)
denotes the event for which inequalities~\eqref{set3} (resp. inequalities~\eqref{set4}) are satisfied. To show $A^2$ occurs with high probability, it suffices to show that
$A^2_1$ and $A^2_2$ occur with high probability. Notice that for each $t, q \in [d]$ and each $1 \leq i < j \leq n$, the random variables $Y^3_{t,q,k}, Y^4_{t,q,k}$ for all $k \in [n] \setminus \{i,j\}$ are independent and can be bounded as
$ -1\leq Y^3_{t,q,k}(i,j)\leq 3$ and $-1\leq Y^4_{t,q,k}(i,j)\leq 3$.
Hence, the proof that $A^2_1$ (or $A^2_2$)  occurs with high probability is very similar to that of $A^1_1$ stated in the proof of Lemma~\ref{MasterOfProbability1}.
That is, it can be checked that
$$\prob(A^2_1) \geq  1- 2d^2(n-2)^2\exp\Big(-\frac{\epsilon_3^2(d) (n-2)}{8}\Big).$$
Since $\epsilon_3(d) \in \Theta(1)$, it follows that $\prob(A^2_1)\rightarrow 1$,
as $n \rightarrow \infty$.
\end{proof}

\begin{lemma}\label{MasterOfProbability3}
Suppose that $X^{\rm in}(i,j)$ for all $1 \leq i < j \leq n$ are generated according to the random corruption model \eqref{unifmodel}.
If
\begin{align}
&\alpha \leq \beta \leq 2,\label{a3}\tag{A3}\\
&\frac{p_{\rm true}^2}{\beta} + (\frac{1}{\alpha}-\frac{1}{\beta})p_{\rm true}+ (\frac{1}{2}-\frac{1}{\alpha}) > 0,\label{a4}\tag{A4}\\
&p_{\rm true} \geq \frac{1}{4},\label{a5}\tag{A5}\\
&n^2>12d,\label{a6}\tag{A6}
\end{align}
and $d\in o(({n/\log(n)})^{\frac 14})$, then event $A^3$ occurs with high probability.
\end{lemma}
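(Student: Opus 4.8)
The plan is to follow the template of Lemmas~\ref{MasterOfProbability1} and~\ref{MasterOfProbability2}: reduce the strict validity of~\eqref{cond3} to the positivity of averages of independent, bounded random variables indexed by $k\in[n]\setminus\{i,j\}$, bound the corresponding expectation away from zero under~\eqref{a3}--\eqref{a6}, and then invoke Hoeffding's inequality together with a union bound. The two inequalities in~\eqref{cond3} are handled identically by relabeling, so I focus on $\kappa_t(i,j)+\tfrac12\big(\delta_{t\to q}^{i\to j}+\delta_{q\to t}^{j\to i}\big)>0$. Throughout we condition on $a_{tt}(i,j)=a_{qq}(i,j)=-1$ and $a_{tq}(i,j)=1$; since in the random corruption model the entries of $X^{\rm in}(k,i)$ and $X^{\rm in}(k,j)$ for $k\notin\{i,j\}$ are independent of the pair $(z_{ij},\mathbf P_{ij})$ that governs this conditioning, it does not disturb the distribution of the $k$-indexed summands. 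Using $a_{tt}(i,j)=-1$ (which, since $X^{\rm in}(i,j)$ is then a permutation matrix fixing $t$, forces $a_{\ell t}(i,j)=1$ for all $\ell\neq t$) the last line of~\eqref{kkappa} and the definition~\eqref{ldelta} of $\delta$ simplify; and~\eqref{a3} guarantees $\tfrac1{2\alpha}-\tfrac{d-1}{2\beta d}\ge\tfrac1{2\beta d}>0$, which is used to control signs in what follows.

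Next I would write $\kappa_t(i,j)+\tfrac12\big(\delta_{t\to q}^{i\to j}+\delta_{q\to t}^{j\to i}\big)=\frac{1}{n-2}\sum_{k\in[n]\setminus\{i,j\}}Y^5_{t,q,k}(i,j)+R_{t,q}(i,j)$, where each $Y^5_{t,q,k}(i,j)$ depends only on the $k$-th pair of input maps (hence are independent across $k$ for fixed $t,q,i,j$) and $R_{t,q}(i,j)$ collects the genuinely lower-order terms of size $O(d/n)$ (the last line of~\eqref{kkappa} and the $k\in\{i,j\}$ boundary contributions). The delicate point is the $\min\{\cdot,\cdot\}$ inside $\kappa_t$: for each $k$, because $X^{\rm in}(k,i)$ and $X^{\rm in}(k,j)$ are permutation matrices, column $t$ of each has a single nonzero entry, so at most two indices $\ell\in[d]\setminus\{t\}$ are ``special''. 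A short case analysis on the signs of $a_{tt}(k,i),a_{tt}(k,j)$ (and, when both equal $1$, on whether the two special rows coincide) shows that $\sum_{\ell\neq t}\min\{\cdot,\cdot\}=\tfrac{d}{\alpha}+O(1)$ with an explicit $O(1)$ remainder; thus the $\Theta(d)$ bulk cancels the deterministic term $-\tfrac{d-2}{\alpha}+1$ in~\eqref{kkappa}, and $|Y^5_{t,q,k}(i,j)|$ is bounded by a constant $M$ depending only on $\alpha,\beta$ once $d=o(n)$. The same case analysis lets me compute $\epsilon_5(d):=\avg\big[Y^5_{t,q,k}(i,j)\big]$ in closed form, using $\prob\big(a_{tt}(k,i)=-1\big)=p_{\rm true}+\tfrac{1-p_{\rm true}}{d}$ and the factorization of the joint law over the $(k,i)$- and $(k,j)$-randomness.

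It then remains to show $\epsilon_5(d)>0$ and to conclude by concentration. I would verify that $\epsilon_5(d)$ is monotone in $d$ by differentiating, reducing positivity to the limit $d\to\infty$, where a direct computation gives $\epsilon_5(d)\to \tfrac{2p_{\rm true}(p_{\rm true}-1)}{\beta}+\tfrac{2(p_{\rm true}-1)}{\alpha}+1$, which is positive precisely under~\eqref{a4}; the finite-$d$ correction is controlled by~\eqref{a5} ($p_{\rm true}\ge\tfrac14$), which is exactly what pins down the relevant sign pattern in the case analysis of the $\min$, and the $O(d/n)$ finite-$n$ correction by~\eqref{a6}. Finally, since for fixed $(t,q,i,j)$ the variables $Y^5_{t,q,k}(i,j)$, $k\in[n]\setminus\{i,j\}$, are independent and bounded by $M$, Hoeffding's inequality and a union bound over the $O(d^2n^2)$ choices of $(t,q,i,j)$ give
\[
\prob(A^3)\ \ge\ 1-2d^2(n-2)^2\exp\!\Big(-\tfrac{\epsilon_5^2(d)\,(n-2)}{2M^2}\Big),
\]
which tends to $1$ under the hypothesis $d\in o\big((n/\log n)^{1/4}\big)$. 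The main obstacle is the bookkeeping forced by the $\min$ term in~\eqref{kkappa}: obtaining simultaneously the cancellation that keeps $Y^5_{t,q,k}$ bounded and the exact value $\epsilon_5(d)$ requires an honest enumeration of the sign patterns of $a_{\ell t}(k,i),a_{\ell t}(k,j),a_{tt}(k,i),a_{tt}(k,j)$, which is considerably more involved than the purely linear expressions treated in Lemmas~\ref{MasterOfProbability1} and~\ref{MasterOfProbability2}.
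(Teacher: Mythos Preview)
Your plan is correct and follows the same template as the paper—decompose~\eqref{cond3} into an average of independent bounded variables indexed by $k$, bound the expectation away from zero, then apply Hoeffding and a union bound—but with one substantive difference in execution. The paper does \emph{not} carry out the case analysis on the $\min$ that you propose; instead it keeps three pieces separate (a deterministic $W_t(i,j)=O(d/n)$, a sum $\tfrac1n\sum_k Q_{t,k}$ with $|Q_{t,k}|\le 1$, and the sum $\tfrac{1}{n-2}\sum_k Y^5_{t,q,k}$), uses the crude range $|Y^5_{t,q,k}|\le 6d$, and pairs it with the weak lower bound $\epsilon_5(d)\ge \tfrac{2p_{\rm true}}{\beta d}\ge\tfrac1{4d}$. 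This is exactly why the paper needs $d\in o((n/\log n)^{1/4})$: the Hoeffding exponent for the $Y^5$ sum is only $\Theta(n/d^4)$. Your route—exploiting the permutation structure so that the $(d-1)/\alpha$ bulk of $\sum_{\ell\neq t}\min\{\cdot,\cdot\}$ cancels $-(d-2)/\alpha$ and leaves $|Y^5|\le M$, paired with $\epsilon_5=\Theta(1)$—would, if the enumeration is done carefully, actually yield a stronger conclusion (only $d=o(n)$ is required), at the price of the bookkeeping you correctly flag as the heart of the matter.

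One minor correction to your accounting of the hypotheses:~\eqref{a5} has nothing to do with the $\min$ case analysis, which is purely combinatorial and independent of $p_{\rm true}$. In the paper,~\eqref{a5} together with $\beta\le 2$ from~\eqref{a3} is used only to convert $\tfrac{2p_{\rm true}}{\beta d}$ into the explicit margin $\tfrac1{4d}$, and~\eqref{a6} is then used to match $|W_t|\le d/n$ against $\epsilon_5/3\ge \tfrac1{12d}$. In your sharper version both hypotheses become essentially redundant: the finite-$d$ correction to $\epsilon_5(d)$ is nonnegative already under~\eqref{a3} (it is $(\tfrac4\alpha-\tfrac2\beta)\tfrac{(p_{\rm true}-1)^2(d-1)}{d^2}+\tfrac{2p_{\rm true}}{\beta d}\ge 0$), so monotonicity in $d$ holds without~\eqref{a5}; and your $O(d/n)$ remainder $R$ is dominated by the $\Theta(1)$ expectation as soon as $d=o(n)$, without any appeal to~\eqref{a6}.
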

\begin{proof}
We start by defining a number of random variables associated with inequalities of condition~\eqref{cond3}.
For each $1 \leq i < j \leq n$, for each $t \in [d]$ and for each $k \in [n] \setminus \{i,j\}$ define:
\begin{equation}\label{zz5}
Z_{t,k}(i,j):=1-\frac{(d-2)}{\alpha}+\frac{n-2}{n}\sum_{l \in [d] \setminus \{t\}}\min\left\{\frac{a_{lt}(k,i)}{\alpha}+\frac{a_{tt}(k,j)}{\beta d},
\frac{a_{lt}(k,j)}{\alpha}+\frac{a_{tt}(k,i)}{\beta d}\right\}.
\end{equation}
For each $1 \leq i < j \leq n$, for each $t \in [d]$ and for each $k \in [n]$ define:
\begin{equation}\label{qq5}
Q_{t,k}(i,j) := \big(\frac{1}{2\alpha}-\frac{d-1}{2\beta d}\big)\big({a_{tt}(k,i)}+{a_{tt}(k,j)}\big),
\end{equation}
where we define ${a_{tt}(i,i)}={a_{tt}(j,j)}=0$.
For each $1 \leq i < j \leq n$, for each $t \in [d]$ define:
\begin{equation}\label{ww5}
W_{t}(i,j):=\frac{1}{2n} \sum_{l \in [d] \setminus \{t\}}{\Big(\frac{a_{tl}(i,j)+a_{tl}(j,i)}{\alpha}+2\frac{a_{tt}(i,j)}{\beta d}\Big)}.
\end{equation}
Finally,  for each $1 \leq i < j \leq n$, for each $t,q \in [d]$ and for each $k \in [n] \setminus \{i,j\}$ define:
\begin{equation*}\label{y5}
Y^5_{t,q,k}(i,j):=\begin{cases}
     Z_{t,k}(i,j)+ \frac{1}{2}\big(\frac{a_{tq}(k,j)-1}{\alpha}+\frac{-1-a_{tt}(k,j)}{\beta d}+ \frac{a_{tq}(i,k)-1}{\alpha}+\frac{-1-a_{qq}(i,k)}{\beta d}\big)& \\
\qquad \qquad \qquad  \qquad \qquad \qquad \qquad \, \text{if $a_{tt}(k,i)=-1$, $a_{qq}(k,j)= -1$}&\\
Z_{t,k}(i,j)+ \frac{1}{2}\big(\frac{a_{tq}(i,k)-1}{\alpha}+\frac{-1-a_{qq}(i,k)}{\beta d}\big)& \\
\qquad \qquad \qquad  \qquad \qquad \qquad \qquad \, \text{if $a_{tt}(k,i)=1$, $a_{qq}(k,j)= -1$}&\\
 Z_{t,k}(i,j)+\frac{1}{2}\big(\frac{a_{tq}(k,j)-1}{\alpha}+\frac{-1-a_{tt}(k,j)}{\beta d}\big)& \\
\qquad \qquad \qquad  \qquad \qquad \qquad \qquad \, \text{if $a_{tt}(k,i)=-1$, $a_{qq}(k,j)= 1$}&\\
 Z_{t,k}(i,j)
 \qquad \qquad \qquad  \qquad \,\,\,\qquad\text{if $a_{tt}(k,i)=1$, $a_{qq}(k,j)= 1$}&
       \end{cases}
\end{equation*}
and
\begin{equation*}\label{y6}
Y^6_{t,q,k}(i,j):=\begin{cases}
      Z_{q,k}(i,j)+\frac{1}{2}\big(\frac{a_{tq}(k,j)-1}{\alpha}+\frac{-1-a_{tt}(k,j)}{\beta d}+ \frac{a_{tq}(i,k)-1}{\alpha}+\frac{-1-a_{qq}(i,k)}{\beta d}\big)& \\
\qquad \qquad \qquad  \qquad \qquad \qquad \qquad \, \text{if $a_{tt}(k,i)=-1$, $a_{qq}(k,j)= -1$}&\\
 Z_{q,k}(i,j)+\frac{1}{2}\big(\frac{a_{tq}(i,k)-1}{\alpha}+\frac{-1-a_{qq}(i,k)}{\beta d}\big)& \\
\qquad \qquad \qquad  \qquad \qquad \qquad \qquad \, \text{if $a_{tt}(k,i)=1$, $a_{qq}(k,j)= -1$}&\\
  Z_{q,k}(i,j)+\frac{1}{2}\big(\frac{a_{tq}(k,j)-1}{\alpha}+\frac{-1-a_{tt}(k,j)}{\beta d}\big)& \\
\qquad \qquad \qquad  \qquad \qquad \qquad \qquad \, \text{if $a_{tt}(k,i)=-1$, $a_{qq}(k,j)= 1$}&\\
  Z_{q,k}(i,j) \qquad \; \qquad \qquad \qquad \qquad\,  \text{if $a_{tt}(k,i)=1$, $a_{qq}(k,j)= 1$}&
       \end{cases}
 \end{equation*}
It then follows that event $A^3$ occurs, \ie condition~\eqref{cond3} is strictly satisfied, if for all $t,q \in [d]$ and
all $1 \leq i < j \leq n$ with $a_{tt}(i,j) = a_{qq}(i,j) = -1, a_{tq}(i,j) = 1$, we have
\begin{align}
&W_{t}(i,j)+\frac 1n\sum_{k \in [n]}Q_{t,k}(i,j)+\frac{1}{n-2}\sum_{k \in [n] \setminus \{i,j\}}{Y^5_{t,q,k}(i,j)} > 0, \label{set5}\\
&W_{q}(i,j)+\frac 1n \sum_{k \in [n]} Q_{q,k}(i,j)+\frac{1}{n-2}\sum_{k \in [n] \setminus \{i,j\}}{Y^6_{t,q,k}(i,j)} > 0 \label{set6}.
\end{align}
We first prove the following inequalities hold for all $t,q \in [d]$ and for all $1 \leq i < j \leq n$:
\begin{equation}
\begin{split}
&
\epsilon_5(d):= \avg\Big[\frac 1n\sum_{k \in [n]}Q_{t,k}(i,j)+\frac{1}{n-2}\sum_{k \in [n] \setminus \{i,j\}}{Y^5_{t,q,k}(i,j)}\Big]>0, \\
&\epsilon_6(d):=\avg\Big[\frac 1n\sum_{k \in [n]}Q_{q,k}(i,j)+\frac{1}{n-2}\sum_{k \in [n] \setminus \{i,j\}}{Y^6_{t,q,k}(i,j)}\Big]>0.\label{2222b}
\end{split}
\end{equation}
Notice that by symmetry, $\epsilon_5(d)=\epsilon_6 (d)$; hence, it suffices to prove the validity of the first inequality in \eqref{2222b}.
By direct calculation we get:
\begin{equation}\label{inter}
\begin{split}
\epsilon_5(d) = &(d-1)\Big(p_{\rm true}^2(\frac{1}{\alpha}-\frac{1}{\beta d})+2p_{\rm true}(1-p_{\rm true})(-\frac{1}{d} (\frac{1}{\alpha}+\frac{1}{\beta d})
    +(1-\frac{1}{d})(\frac{1}{\alpha}-\frac{1}{\beta d}))\\
    +&
    (1-p_{\rm true})^2 (\frac{1}{\alpha}+\frac{1}{\beta d})(1-\frac{2}{d}-\frac{2}{d-1}(\frac{1}{d}-1)^2)\Big)\\
    +&2 \Big(p_{\rm true}+\frac{1-p_{\rm true}}{d}\Big) \Big(\frac{p_{\rm true}}{\beta d}+\frac{1-p_{\rm true}}{d}(\frac{1}{\beta d}-\frac{1}{\alpha})-\frac{1}{\beta d}\Big)\\
-&\Big(\frac{d-1}{\beta d}-\frac{1}{\alpha}\Big)\Big(1-2(p_{\rm true}+\frac{1-p_{\rm true}}{d})\Big)-\frac{(d-2)}{\alpha}+1\\
=& 2\Big[\frac{p_{\rm true}^2}{\beta} + (\frac{1}{\alpha}-\frac{1}{\beta})p_{\rm true}+ (\frac{1}{2}-\frac{1}{\alpha})\Big] +(\frac 4\alpha-\frac 2\beta)\frac{(p_{\rm true}-1)^2(d-1)}{d^2} +\frac{2p_{true}}{\beta d}\\
\geq &\frac{2p_{true}}{\beta d}\geq \frac{1}{4d},
\end{split}
\end{equation}
where the inequalities in the last line follow from assumptions~\eqref{a3},~\eqref{a4}, and ~\eqref{a5}.

Next we show that event $A^3$ occurs with high probability. Denote by $A^3_1$ (resp. $A^3_2$) the event that inequalities~\eqref{set5} (resp. inequalities~\eqref{set6}) are satisfied. Then $A^3 = A^3_1 \cap A^3_2$. To show that $A^3$ occurs with high probability, it suffices to show that $A^3_1$ and $A^3_2$ occur with probability. In the following we show that $A^3_1$ occurs with high probability. The proof for $A^3_2$ follows from a similar line of arguments.
\begin{align}
&\prob(A^3_1)\nonumber\\
\geq &\prob \Bigg( \bigcap_{t,q,i,j}\Big\{W_{t}(i,j)+\frac 1n\sum_{k \in [n]}Q_{t,k}(i,j)+\frac{1}{n-2}\sum_{k \in [n] \setminus \{i,j\}}{Y^5_{t,q,k}(i,j)} > 0\Big\}\Bigg ) \nonumber\\
= &\prob \Bigg (\bigcap_{t,q,i,j}\Big\{W_{t}(i,j)  +\frac 1n\sum_{k \in [n]}Q_{t,k}(i,j) - \avg\Big[\frac 1n\sum_{k \in [n]}Q_{t,k}(i,j)\Big]+\frac{1}{n-2}\sum_{k \in [n] \setminus \{i,j\}}Y^5_{t,q,k}(i,j)\nonumber\\
-&\avg\Big[\frac{1}{n-2}\sum_{k \in [n] \setminus \{i,j\}}Y^5_{t,q,k}(i,j)\Big]>-\epsilon_5(d)\Big\}\Bigg) \nonumber\\
\geq &\prob \Bigg (\bigcap_{t,q,i,j}\Big\{\Big|W_{t}(i,j)+\frac 1n\sum_{k \in [n]}Q_{t,k}(i,j) - \avg\Big[\frac 1n\sum_{k \in [n]}Q_{t,k}(i,j)\Big]
+ \frac{1}{n-2}\sum_{k \in [n] \setminus \{i,j\}}Y^5_{t,q,k}(i,j)\nonumber\\
-&\avg\Big[\frac{1}{n-2}\sum_{k \in [n] \setminus \{i,j\}}Y^5_{t,q,k}(i,j)\Big] \Big| < \epsilon_5(d)\Big\}\Bigg ) \nonumber\\
\geq &\prob \Bigg (\bigcap_{t,i,j}\Big\{\Big|W_{t}(i,j)\Big| < \frac{\epsilon_5(d)}{3}\Big\} \cap \bigcap_{t,i,j}\Big\{\Big| \frac 1n\sum_{k \in [n]}Q_{t,k}(i,j) - \avg\Big[\frac 1n\sum_{k \in [n]}Q_{t,k}(i,j)\Big]< \frac{\epsilon_5(d)}{3}\Big\}\nonumber\\
\cap &\bigcap_{t,q,i,j}\Big\{\Big| \frac{1}{n-2}\sum_{k \in [n] \setminus \{i,j\}}Y^5_{t,q,k}(i,j)-\avg\Big[\frac{1}{n-2}\sum_{k \in [n] \setminus \{i,j\}}Y^5_{t,q,k}(i,j)\Big]\Big| < \frac{\epsilon_5(d)}{3}\Big\}\Big| \Bigg),  \label{p1}
\end{align}
where all inequalities follow by set inclusion.
It can be checked that  $|W_{t}(i,j)|\leq \frac{d}{n}$; hence by \eqref{a6} we get $|W_{t}(i,j)|\leq \frac{1}{12d}$. Moreover by~\eqref{inter} we have
$\frac{\epsilon_5(d)}{3} \geq \frac{1}{12 d}$.  So we deduce that
$|W_{t}(i,j)|\leq  \frac{\epsilon_5(d)}{3}$, which in turn implies
\begin{equation}\label{p2}
\Big\{\Big|W_{t}(i,j)\Big| \geq \frac{\epsilon_5}{3}\Big\}=\emptyset.
\end{equation}
Combining~\eqref{p1} and~\eqref{p2} we obtain:
\begin{equation*}
\begin{split}
&\prob(A^3_1)\\
\geq & 1- \prob \Bigg(\bigcup_{t,i,j}\Big\{\Big|\frac 1n\sum_{k \in [n]}Q_{t,k}(i,j) - \avg\Big[\frac 1n\sum_{k \in [n]}Q_{t,k}(i,j)\Big]\Big| \geq \frac{\epsilon_5(d)}{3}\Big\}\\ & \cup \bigcup_{t,q,i,j}\Big\{\Big|\frac{1}{n-2}\sum_{k \in [n] \setminus \{i,j\}}Y^5_{t,q,k}(i,j)-\avg\Big[\frac{1}{n-2}\sum_{k \in [n] \setminus \{i,j\}}Y^5_{t,q,k}(i,j)\Big]\Big| \geq\frac{\epsilon_5(d)}{3}\Big\}\Bigg) \\
\geq & 1- \sum_{t,i,j}\prob\Bigg(\Big|\frac 1n\sum_{k \in [n]}Q_{t,k}(i,j) - \avg\Big[\frac 1n\sum_{k \in [n]}Q_{t,k}(i,j)\Big]\Big| \geq \frac{\epsilon_5(d)}{3} \Bigg)\\
&- \sum_{t,q,i,j}\prob\Bigg(\Big|\frac{1}{n-2}\sum_{k \in [n] \setminus \{i,j\}}Y^5_{t,q,k}(i,j)-\avg\Big[\frac{1}{n-2}\sum_{k \in [n] \setminus \{i,j\}}Y^5_{t,q,k}(i,j)\Big]\Big| \geq\frac{\epsilon_5(d)}{3}  \Bigg)\\
\geq & 1 - 2dn^2\exp\Big(-\frac{\epsilon_5^2(d) n}{9}\Big)-2d^2(n-2)^2 \exp\Big(-\frac{\epsilon_5^2(d) (n-2)}{162d^2}\Big),
\end{split}
\end{equation*}
where the second inequality follows from taking the union bound. The last inequality follows from the application of Hoeffding's inequality~\cite{VerBookHDP} by noting that for each
$(t,i,j)$ the random variables $Q_{t,k}(i,j)$ are independent for all $k \in [n]$ and can be bounded as $|Q_{t,k}(i,j)|\leq 1$. Moreover,
for each $(t,q,i,j)$ the random variables $Y^5_{t,q,k}(i,j)$ are independent  for all $k \in [n] \setminus \{i,j\}$ and can be bounded as $|Y^5_{t,q,k}(i,j)|\leq 6d$. By \eqref{inter}, the result follows by letting $n \rightarrow \infty$, since $d\in o(({n/\log(n)})^{\frac 14})$.
\end{proof}

\begin{lemma}\label{MasterOfProbability4}
Suppose that $X^{\rm in}(i,j)$ for all $1 \leq i < j \leq n$ are generated according to the random corruption model \eqref{unifmodel}.
If assumptions~\eqref{a1},~\eqref{a3},~\eqref{a4},~\eqref{a5},~and \eqref{a6} are satisfied, and $d \in o(({n/\log(n)})^{\frac 14})$, then events $A^4$ and $A^5$ occur with high probability.
\end{lemma}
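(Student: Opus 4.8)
The plan is to follow the same template as the proof of Lemma~\ref{MasterOfProbability3}, and first to reduce the statement to the single event $A^4$. Indeed, the conditions~\eqref{cond5} are obtained from~\eqref{cond4} by the simultaneous relabeling $t \leftrightarrow q$ and $i \leftrightarrow j$ (using $\kappa_t(i,j) = \kappa_t(j,i)$ and $a_{qt}(j,i) = a_{tq}(i,j)$), exactly the symmetry used to deduce case~(v) from case~(iv) in the proof of Lemma~\ref{optimality}; since the random corruption model~\eqref{unifmodel} is invariant under simultaneous relabeling of the $d$ coordinates and of the $n$ objects, this yields $\prob(A^5) = \prob(A^4)$, so it suffices to show $\prob(A^4) \to 1$. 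Fix the case $a_{tt}(i,j) = 1$, $a_{qq}(i,j) = -1$, $a_{tq}(i,j) = 1$ (note $t \neq q$), and write the left-hand side of~\eqref{cond4}, namely $\kappa_q(i,j) + \delta^{i \rightarrow j}_{t \rightarrow q} + \delta^{j \rightarrow i}_{q \rightarrow t}$, in the form $W_q(i,j) + \frac{1}{n}\sum_{k \in [n]} Q_{q,k}(i,j) + \frac{1}{n-2}\sum_{k \in [n]\setminus\{i,j\}} \hat Y_{t,q,k}(i,j)$, where $W_q$, $Q_{q,k}$, $Z_{q,k}$ are as in~\eqref{ww5},~\eqref{qq5},~\eqref{zz5} (recall $\kappa_q(i,j) = W_q(i,j) + \frac{1}{n}\sum_k Q_{q,k}(i,j) + \frac{1}{n-2}\sum_k Z_{q,k}(i,j)$ exactly), and $\hat Y_{t,q,k}(i,j)$ combines $Z_{q,k}(i,j)$ with the per-$k$ contributions of $\delta^{i \rightarrow j}_{t \rightarrow q}$ and $\delta^{j \rightarrow i}_{q \rightarrow t}$, with the normalization arranged as in~\eqref{y5}--\eqref{y6}; note that $\delta$ now enters with coefficient $1$ rather than $\tfrac12$, and that $|W_q(i,j)| \leq \frac{d}{n}$, $|Q_{q,k}(i,j)| \leq 1$, $|\hat Y_{t,q,k}(i,j)| = O(d)$.

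The crux is to prove that $\hat\epsilon(d) := \avg\big[\frac{1}{n}\sum_k Q_{q,k}(i,j) + \frac{1}{n-2}\sum_k \hat Y_{t,q,k}(i,j)\big] \geq \frac{1}{4d}$ under the stated assumptions, and I would obtain this by comparing with the case-(iii) computation~\eqref{inter}. First, since $Q_{q,k}$ and $Z_{q,k}$ do not involve the entry $a_{tt}(i,j)$ (as $t \neq q$), their joint law conditioned on $a_{qq}(i,j)=-1$ is the same in cases (iii) and (iv), and by the coordinate symmetry and $\avg[\delta^{i \rightarrow j}_{t \rightarrow q}\mid(iii)] = \avg[\delta^{j \rightarrow i}_{q \rightarrow t}\mid(iii)]$ one gets $\avg[\frac{1}{n}\sum_k Q_{q,k} + \frac{1}{n-2}\sum_k Z_{q,k}] = \epsilon_5(d) - \avg[\delta^{i \rightarrow j}_{t \rightarrow q}\mid(iii)]$, where $\epsilon_5(d)$ is as in~\eqref{inter}. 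Second, passing from case (iii) to case (iv) only flips $a_{tt}(i,j)$ from $-1$ to $+1$: this increases $\delta^{i \rightarrow j}_{t \rightarrow q}$ by $\frac{2}{\beta d\, n}\,|\{k \neq i,j : a_{tt}(k,i) = -1\}|$ and leaves $\delta^{j \rightarrow i}_{q \rightarrow t}$ unchanged, so $\avg[\delta^{i \rightarrow j}_{t \rightarrow q} + \delta^{j \rightarrow i}_{q \rightarrow t}\mid(iv)] = \avg[\delta^{i \rightarrow j}_{t \rightarrow q} + \delta^{j \rightarrow i}_{q \rightarrow t}\mid(iii)] + \frac{n-2}{n}\big(p_{\rm true}+\frac{1-p_{\rm true}}{d}\big)\frac{2}{\beta d}$. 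Adding the two displays, the case-(iii) $\delta$-contributions cancel and one is left with $\hat\epsilon(d) = \epsilon_5(d) + \frac{n-2}{n}\big(p_{\rm true}+\frac{1-p_{\rm true}}{d}\big)\frac{2}{d}\Big(\frac{p_{\rm true}+(1-p_{\rm true})/d}{\beta} - \frac{1-p_{\rm true}}{\alpha}\Big)$, in which the last bracket is nonnegative by assumption~\eqref{a3}--\eqref{a1} (it is at least $\frac{p_{\rm true}}{\beta} - \frac{1-p_{\rm true}}{\alpha} \geq 0$ by~\eqref{a1}); hence $\hat\epsilon(d) \geq \epsilon_5(d) \geq \frac{1}{4d}$ by~\eqref{inter}, which uses~\eqref{a3},~\eqref{a4},~\eqref{a5}. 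I expect this comparison---carefully tracking the conditioning events and the fact that only $a_{tt}(i,j)$ changes between cases~(iii) and~(iv)---to be the main obstacle, though, as just sketched, it introduces no inequality beyond those already used in Lemma~\ref{MasterOfProbability3} together with~\eqref{a1}.

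With $\hat\epsilon(d) \geq \frac{1}{4d}$ in hand, the concentration step is routine and mirrors that of Lemma~\ref{MasterOfProbability3} verbatim. By~\eqref{a6}, $|W_q(i,j)| \leq \frac{d}{n} \leq \frac{1}{12d} \leq \frac{\hat\epsilon(d)}{3}$, so the deterministic piece contributes an empty ``bad'' event. Applying Hoeffding's inequality to $\frac{1}{n}\sum_k Q_{q,k}(i,j)$ and to $\frac{1}{n-2}\sum_k \hat Y_{t,q,k}(i,j)$ (each a sum of independent summands bounded by $1$ and $O(d)$, respectively), and taking a union bound over the $O(d^2 n^2)$ tuples $(t,q,i,j)$, gives
\[
\prob(A^4) \;\geq\; 1 - 2dn^2 \exp\!\Big(-\frac{\hat\epsilon^2(d)\, n}{9}\Big) - C\, d^2 n^2 \exp\!\Big(-\frac{\hat\epsilon^2(d)\,(n-2)}{C' d^2}\Big)
\]
for absolute constants $C, C'$; since $\hat\epsilon(d) = \Theta(1/d)$, the dominant exponent behaves like $-\Theta(n/d^4)$, which overwhelms $\log(d^2 n^2)$ exactly when $d \in o\big((n/\log n)^{1/4}\big)$, so $\prob(A^4) \to 1$ as $n \to \infty$, and $\prob(A^5) = \prob(A^4) \to 1$ by the symmetry noted at the outset.
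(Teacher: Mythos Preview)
Your proposal is correct and follows essentially the same approach as the paper: decompose $\kappa_q(i,j)+\delta^{i\to j}_{t\to q}+\delta^{j\to i}_{q\to t}$ into the pieces $W_q$, $\tfrac1n\sum_k Q_{q,k}$, and a per-$k$ sum of bounded independent variables, show the expectation satisfies $\hat\epsilon(d)\geq\epsilon_5(d)\geq\tfrac{1}{4d}$ via~\eqref{a1} (together with~\eqref{a3}--\eqref{a5} for $\epsilon_5$), and then reuse the Hoeffding/union-bound step from Lemma~\ref{MasterOfProbability3} verbatim. Your extra-term computation $\hat\epsilon(d)-\epsilon_5(d)=\tfrac{n-2}{n}\,P\,\tfrac{2}{d}\big(\tfrac{P}{\beta}-\tfrac{1-p_{\rm true}}{\alpha}\big)$ with $P=p_{\rm true}+\tfrac{1-p_{\rm true}}{d}$ is in fact the correct simplification of the paper's direct calculation; one small wording fix: the ``case-(iii) $\delta$-contributions'' do not literally cancel in your two displays---rather they combine with the increment term to yield exactly the bracket you wrote.
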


\begin{proof}
We start by defining some random variables associated with inequalities of conditions~\eqref{cond4} and~\eqref{cond5}.
For each $1 \leq i < j \leq n$, for each $t,q \in [d]$ and for each $k \in [n] \setminus \{i,j\}$ define:
\begin{equation*}\label{y7}
Y^7_{t,q,k}(i,j):=\begin{cases}
      Z_{q,k}(i,j)+\frac{a_{tq}(k,j)-1}{\alpha}+\frac{1-a_{tt}(k,j)}{\beta d}+ \frac{a_{tq}(i,k)-1}{\alpha}+\frac{-1-a_{qq}(i,k)}{\beta d}& \\
\qquad \qquad \qquad  \qquad \qquad \qquad \qquad \, \text{if $a_{tt}(k,i)=-1$, $a_{qq}(k,j)= -1$}&\\
 Z_{q,k}(i,j)+\frac{a_{tq}(i,k)-1}{\alpha}+\frac{-1-a_{qq}(i,k)}{\beta d}& \\
\qquad \qquad \qquad  \qquad \qquad \qquad \qquad \, \text{if $a_{tt}(k,i)=1$, $a_{qq}(k,j)= -1$}&\\
 Z_{q,k}(i,j)+\frac{a_{tq}(k,j)-1}{\alpha}+\frac{1-a_{tt}(k,j)}{\beta d}& \\
\qquad \qquad \qquad  \qquad \qquad \qquad \qquad \, \text{if $a_{tt}(k,i)=-1$, $a_{qq}(k,j)= 1$}&\\
 Z_{q,k}(i,j) \qquad \qquad \qquad \qquad \; \qquad \text{if $a_{tt}(k,i)=1$, $a_{qq}(k,j)= 1$},&
       \end{cases}
\end{equation*}
where $Z_{q,k}(i,j)$ is defined by~\eqref{zz5}.
It can be checked that condition~\eqref{cond4} is strictly satisfied, if for all $t,q \in [d]$ and
all $1 \leq i < j \leq n$ with $a_{tt}(i,j) =1, a_{qq}(i,j) = -1, a_{tq}(i,j) = 1$, we have
$$W_{q}(i,j)+\frac 1n\sum_{k \in [n]}Q_{q,k}(i,j)+\frac{1}{n-2}\sum_{k \in [n] \setminus \{i,j\}}{Y^7_{t,q,k}(i,j)} > 0,$$
where $Q_{q,k}(i,j)$ $W_{q}(i,j)$ and given by~\eqref{qq5} and~\eqref{ww5}, respectively.
For each $1 \leq i < j \leq n$, for each $t,q \in [d]$ and for each $k \in [n] \setminus \{i,j\}$ define:
\begin{equation*}\label{y7}
Y^8_{t,q,k}(i,j):=\begin{cases}
     Z_{t,k}(i,j)+\frac{a_{tq}(k,j)-1}{\alpha}+\frac{-1-a_{tt}(k,j)}{\beta d}+ \frac{a_{tq}(i,k)-1}{\alpha}+\frac{1-a_{qq}(i,k)}{\beta d}& \\
\qquad \qquad \qquad \qquad \,\,\, \text{if $a_{tt}(k,i)=-1$, $a_{qq}(k,j)= -1$}&\\
Z_{t,k}(i,j)+\frac{a_{tq}(i,k)-1}{\alpha}+\frac{1-a_{qq}(i,k)}{\beta d}& \\
\qquad \qquad \qquad \qquad \,\,\, \text{if $a_{tt}(k,i)=1$, $a_{qq}(k,j)= -1$}&\\
 Z_{t,k}(i,j)+\frac{a_{tq}(k,j)-1}{\alpha}+\frac{-1-a_{tt}(k,j)}{\beta d}& \\
\qquad \qquad \qquad \qquad \,\,\, \text{if $a_{tt}(k,i)=-1$, $a_{qq}(k,j)= 1$} &\\
 Z_{t,k}(i,j) \qquad \qquad \quad \text{if $a_{tt}(k,i)=1$, $a_{qq}(k,j)= 1$}&
       \end{cases}
\end{equation*}
It then follows that condition~\eqref{cond5} is strictly satisfied, if for all $t,q \in [d]$ and
all $1 \leq i < j \leq n$ with $a_{tt}(i,j) =-1, a_{qq}(i,j) = a_{tq}(i,j) = 1$, we have
$$W_{t}(i,j)+\frac 1n\sum_{k \in [n]}Q_{t,k}(i,j)+\frac{1}{n-2}\sum_{k \in [n] \setminus \{i,j\}}{Y^8_{t,q,k}(i,j)} > 0.$$
We first prove that the following inequalities hold for all $t,q \in [d]$ and for all $1 \leq i < j \leq n$:
\begin{equation}\label{2222c}
\begin{split}
&\epsilon_7(d):=\avg\Big[\frac 1n\sum_{k \in [n]}Q_{q,k}(i,j)+\frac{1}{n-2}\sum_{k \in [n] \setminus \{i,j\}}{Y^7_{t,q,k}(i,j)}\Big]>0, \\
&\epsilon_8(d):=\avg\Big[\frac 1n\sum_{k \in [n]}Q_{t,k}(i,j)+\frac{1}{n-2}\sum_{k \in [n] \setminus \{i,j\}}{Y^8_{t,q,k}(i,j)}\Big]>0.
\end{split}
\end{equation}
Notice that by symmetry, $\epsilon_7(d) = \epsilon_8(d)$; hence, it suffices to prove the
validity of the first inequality in~\eqref{2222c}. By direct computation we get
\begin{align*}
\epsilon_7(d) = &(d-1)\Big(p_{\rm true}^2(\frac{1}{\alpha}-\frac{1}{\beta d})+2p_{\rm true}(1-p_{\rm true})(-\frac{1}{d} (\frac{1}{\alpha}+\frac{1}{\beta d})
    +(1-\frac{1}{d})(\frac{1}{\alpha}-\frac{1}{\beta d}))\\
    +&(1-p_{\rm true})^2 (\frac{1}{\alpha}+\frac{1}{\beta d})(1-\frac{2}{d}-\frac{2}{d-1}(\frac{1}{d}-1)^2)\Big)\\
    +&4 \Big(p_{\rm true}+\frac{1-p_{\rm true}}{d}\Big)\Big(\frac{p_{\rm true}}{\beta d}+\frac{1-p_{\rm true}}{d}(\frac{1}{\beta d}-\frac{1}{\alpha})-\frac{1}{2\beta d}\Big)\\
-&\Big(\frac{d-1}{\beta d}-\frac{1}{\alpha}\Big)\Big(1-2(p_{\rm true}+\frac{1-p_{\rm true}}{d})\Big)-\frac{d-2}{\alpha}+1\\
= & \epsilon_5(d) + \frac{p_{\rm true}}{\beta d} + \frac{1-p_{\rm true}}{d} (\frac{1}{\beta d}-\frac{1}{\alpha})\\
\geq& \epsilon_5(d),
\end{align*}
where $\epsilon_5(d)$ is defined by \eqref{2222b}. The last inequality follows from the proof of Lemma~\ref{MasterOfProbability1} which in turn follows from assumption~\eqref{a1}.
Hence, $\epsilon_7(d) > 0$ is implied by $\epsilon_5(d) > 0$ which by proof of Lemma~\ref{MasterOfProbability3} is valid if assumptions~\eqref{a3}, ~\eqref{a4}, and~\eqref{a5} hold. It remains to show that events $A^4, A^5$ occur with high probability.
Note that for each $(t,q,i,j)$, random variables $Y^7_{t,q,k}(i,j)$ (resp. $Y^8_{t,q,k}(i,j)$) are independent for all $k \in [n] \setminus \{i,j\}$ and can be bounded as $|Y^7_{t,q,k}(i,j)| \leq 6d$
(resp. $|Y^8_{t,q,k}(i,j)| \leq 6d$). Hence, by assumption~\eqref{a6},
the associated proof for event $A^3_1$ of Lemma~\ref{MasterOfProbability3} can be repeated verbatim to show $A^4$ and $A^5$ occur with high probability.
\end{proof}

We are now ready to prove the main result of this section.

\noindent\emph{Proof of Theorem~\ref{main}.}
Let $\alpha = 1.172$ and $\beta = 1.657$; then it is simple to verify that for $p_{\rm true} > 0.585$, assumptions~\eqref{a1}-\eqref{a5} of Lemmata~\ref{MasterOfProbability1}-\ref{MasterOfProbability4} are satisfied. Moreover, assumption $d\in o(({n/\log(n)})^{\frac 14})$ implies that \eqref{a6} is satisfied for $n$ large enough and this completes the proof. \qed

In the proofs of Lemmata~\ref{MasterOfProbability1}-\ref{MasterOfProbability4}, to prove the positivity of expectations of certain random variables, we show that
these expectations are non-increasing functions of $d$ and hence we show their positivity for the limiting case $d \rightarrow \infty$. Indeed assumptions~\eqref{a1}-\eqref{a5} can be slightly relaxed if such asymptotic analysis is avoided. That is, for each $d \geq 2$, we consider the following optimization problem:
\begin{align}\label{exact}
 \min_{\alpha, \beta, p_{\rm true}} \; & p_{\rm true} \\
 \st \; &\frac{p_{\rm true}}{\beta} + (1-p_{\rm true}) (\frac{1}{\beta d}-\frac{1}{\alpha}) \geq 0, \nonumber\\
&  \Big(p_{\rm true}+\frac{1-p_{\rm true}}{d} \Big)\Big(\frac{p_{\rm true}}{\beta d}+(\frac{1}{\beta d}-\frac{1}{\alpha})\frac{1-p_{\rm true}}{d}+\frac{1}{\alpha}\Big) \geq \frac{1}{2},\nonumber\\
& 2\Big(\frac{p_{\rm true}^2}{\beta} + (\frac{1}{\alpha}-\frac{1}{\beta})p_{\rm true}+ (\frac{1}{2}-\frac{1}{\alpha})\Big) +(\frac 4\alpha-\frac 2\beta)\frac{(p_{\rm true}-1)^2(d-1)}{d^2} +\frac{2p_{{\rm true}}}{\beta d} \geq 0, \nonumber\\
& \frac{1}{4} \leq p_{\rm true} \leq 1,\nonumber\\
& 0 \leq \alpha \leq \beta \leq 2, \nonumber
\end{align}
where the first constraint is stated in the proof of Lemma~\ref{MasterOfProbability1}, the second constraint is stated in the proof of Lemma~\ref{MasterOfProbability2}, and
the third  constraint is stated in the proof of Lemma~\ref{MasterOfProbability3}. Then for each $d \in [2, +\infty)$, the optimal value $p_{\rm true}^*$ of the above problem serves as a recovery threshold of Problem~\eqref{primal}.
While we are unable to obtain an analytical solution for Problem~\eqref{exact}, we solve this problem numerically with high accuracy using the global solver {\tt BARON}~\cite{IdaNick18}
for $d \in [2,50]$. Results are depicted in Figure~\ref{figure1}; for example, this figure indicate that for $d=2$,  the LP recovers the ground truth with high probability if $p_{\rm true} > \frac{1}{3}$. While by performing a worst-case analysis, we have obtained a more conservative bound of $p_{\rm true} > 0.585$ for the recovery threshold of the LP, Figure~\ref{figure1} suggests that this bound is a good approximation for $d \gtrsim 40$.

\begin{figure}[htbp]
 \centering
 \epsfig{figure=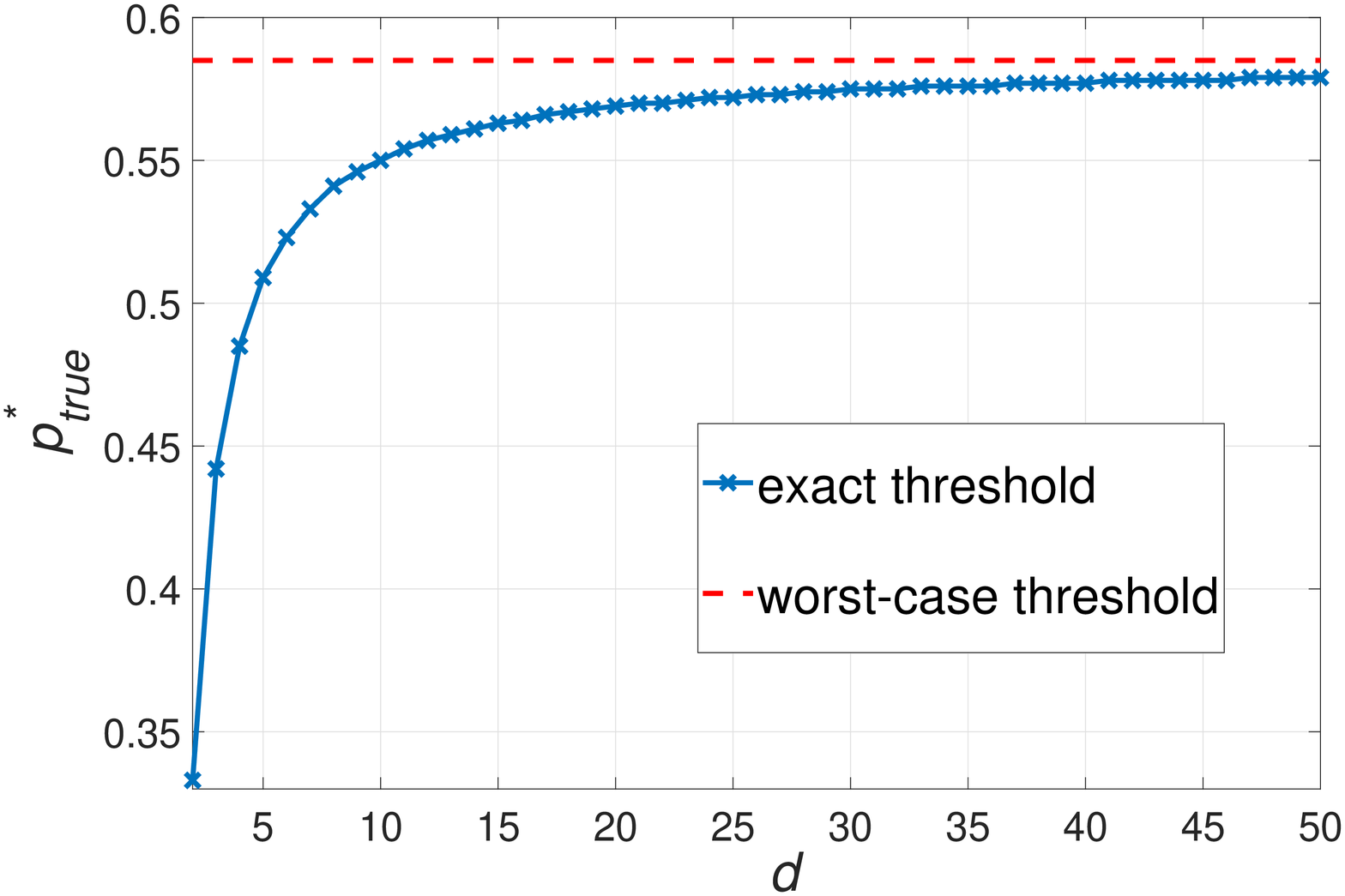, scale=0.23, trim=0mm 0mm 80mm 0mm, clip}
 \caption{Recovery thresholds for Problem~\eqref{primal}: the worst-case recovery threshold $p_{\rm true}^* = 0.585$ is given by Theorem~\ref{main} and the exact recovery threshold is
 obtained by solving Problem~\eqref{exact} numerically for each $d \in [2,50]$.}
\label{figure1}
\end{figure}

We conclude this section by acknowledging that this theoretical study serves as a starting point for understanding the recovery properties of LP relaxations for joint object matching. Obtaining recovery guarantees for the general problem with partially similar objects and incomplete map graphs together with investigating the impact of consistency inequalities~\eqref{superg} on the power of LP relaxations are topics of future research.

\section{Numerical Experiments}
\label{sec:numerics}

In this section, we conduct a preliminary numerical study to demonstrate the desirable
numerical properties of proposed LP relaxations for joint object matching.  A comprehensive
computational study that includes various real data sets from the literature is a topic of future research.
Throughout this section, we focus on the special case of permutation group synchronization problem and generate problem instances according to the random corruption model~\eqref{unifmodel}. Moreover, our numerical experiments are all performed on the {\tt NEOS} server~\cite{neos98}.

\subsection{Exact solution of the ILP}

We start by demonstrating that even for small instances,
the exact solution of joint object matching using the state-of-the-art MIP solvers is beyond reach. To this end, we solve Problem~\eqref{ip} using {\tt GAMS/Gurobi}~\cite{gurobi}.
We set a time limit of $25,000$ seconds; all other parameters are set to their default values. We set $n=20$, $d \in \{3,4\}$, and $p_{\rm true} \in \{0.30, 0.40, 0.50\}$.
For each combination of $(n,d,p_{\rm true})$, we run 5 random instances. We find that the MIP solver is unable to solve {\rm any} of these instances
to optimality within the time limit. More detailed results are shown in Table~\ref{table1}; for each $(n,d,p_{\rm true})$, we provide the average relative gap of the MIP upon termination. The relative gap is defined as $g_{\rm rel} =\frac{|BP-BF|}{|BF|}$, where $BF$ is the objective function value of the current best integer solution, while $BP$ is the best possible integer solution. For each case, we also report the average solution time of the corresponding basic LP given by Problem~\eqref{primal}. As practical instances of joint object matching have larger values for $(n,d)$, this experiment conveys the need for developing efficient convex relaxations for this problem.

\begin{table}
\centering
\caption{Exact solution of permutation group synchronization with $n =20$ using the MIP solver {\tt Gurobi}. All CPU times are reported in seconds and are averaged over
5 random instances. $g_{\rm rel}$ denotes the MIP relative gap upon termination and is averaged over 5 random instances.\\}
\label{table1}
{\begin{tabular}{l|l|l|l}
\toprule														
$(d, p_{\rm true})$	&	MIP time  & $g_{\rm rel}$ (\%)& LP time \\
\midrule	
$(3, \; 0.3)$  &    $> 25000$   &   $11.5\%$ &    $3.8$\\
$(3, \; 0.4)$  &    $> 25000$   &   $7.5 \%$ &    $2.2$\\
$(3,\; 0.5)$  &    $>25000$     &   $10.5\%$ &    $2.8$\\
$(4, \; 0.3)$  &    $> 25000$   &   $15.5\%$ &     $5.3$\\
$(4, \; 0.4)$  &    $> 25000$   &   $24.9\%$ &     $10.2$\\
$(4, \; 0.5)$  &    $> 25000$   &   $21.1\%$ &     $11.4$ \\
\bottomrule															
\end{tabular}}															
\end{table}

\subsection{The basic LP versus the SDP}

Next we compare recovery properties of the basic LP relaxation~\eqref{primal} with those of the popular SDP relaxation~\eqref{sdp} strengthened with constraint~\eqref{superSDP}. In the following, we consider a more general random model than the random corruption model given by~\eqref{unifmodel}; namely, we do not assume that the map graph is complete. Let $p_{\rm obs} \in (0, 1]$; for each $1 \leq i < j \leq n$, with probability $p_{\rm obs}$
the input map $X^{\rm in}(i,j)$ is generated according to~\eqref{unifmodel}; otherwise, the input map between objects $i$ and $j$ is not observed and we set $a_{tq}(i,j) = 0$ for all $t,q \in [d]$.
For our numerical experiments, we set $n = 20$, $d \in \{3,4,5\}$, and $p_{\rm obs} \in \{\frac{1}{2}, 1\}$. We set $p_{\rm true} \in [\underline p: 0.02: \bar p]$,
where $[\underline p: 0.02: \bar p]$
denotes a regularly-spaced vector between $\underline p $ and $\bar p$ using 0.02 as the increment between elements, and where $\underline p$ (resp. $\bar p$) is chosen small enough (resp. large enough) so that the recovery rate is zero (resp. one).
For each combination of $(n,d,p_{\rm true}, p_{\rm obs})$ we conduct 50 random trials.
We count the number of times the optimization algorithm returns the
ground truth $\bar X$ as the optimal solution; dividing this number by total number of trials, we
obtain the empirical recovery rate.
In addition to the empirical recovery rate, we compute the empirical \emph{tightness rate}. That is, we compute the fraction of times the optimization algorithm returns a binary solution. Recall that if the solution of the LP/SDP relaxation is binary, then it is feasible for Problem~\eqref{jom} and hence is optimal for the original nonconvex problem.

\begin{figure}[htbp]
 \centering
 \subfigure [$p_{\rm obs}= 1.0, \; d=3$]{\label{fig2a}\epsfig{figure=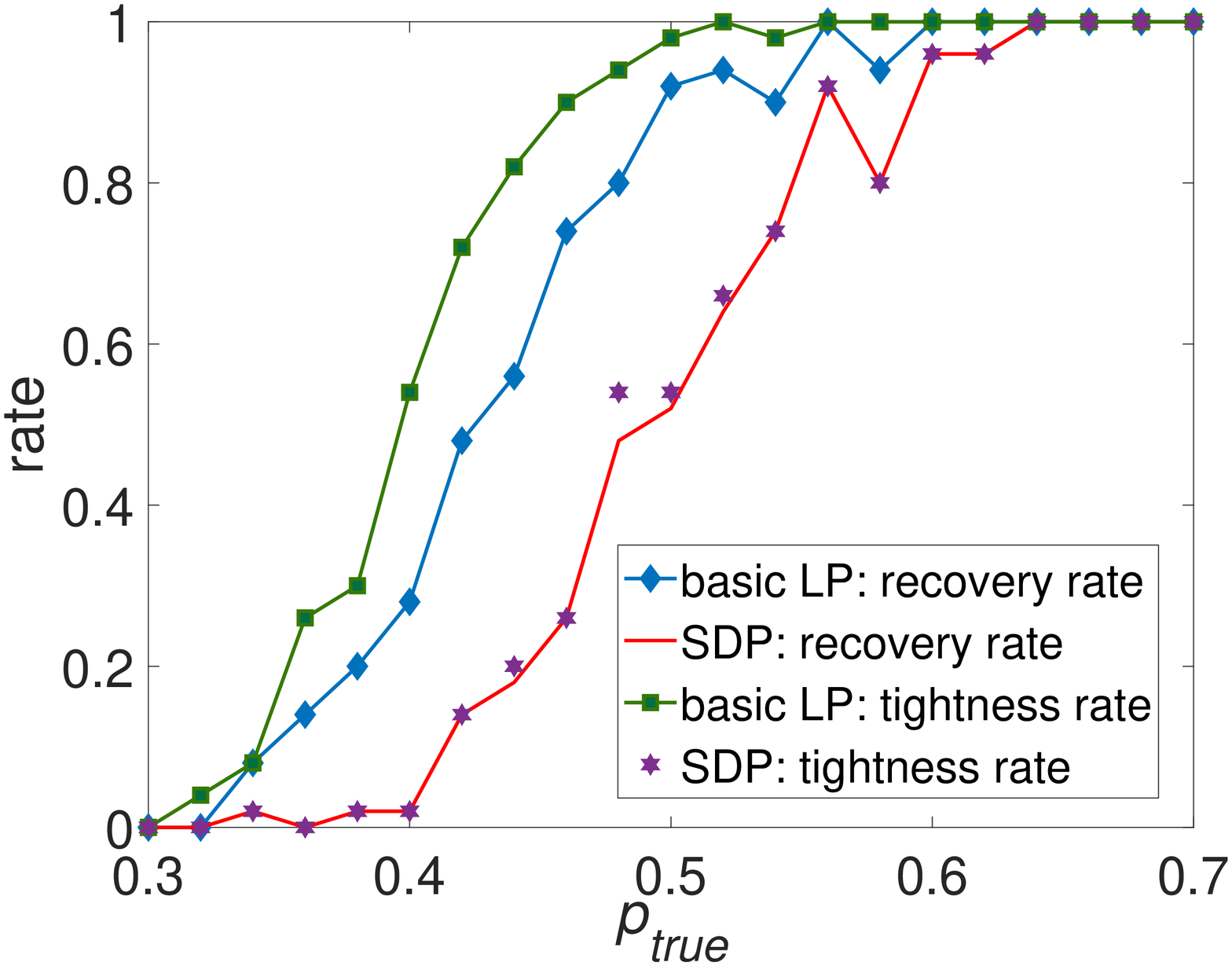, scale=0.225, trim=25mm 0mm 130mm 0mm, clip}}
   \subfigure[$p_{\rm obs}= 0.5, \; d=3$]{\label{fig2d}\epsfig{figure=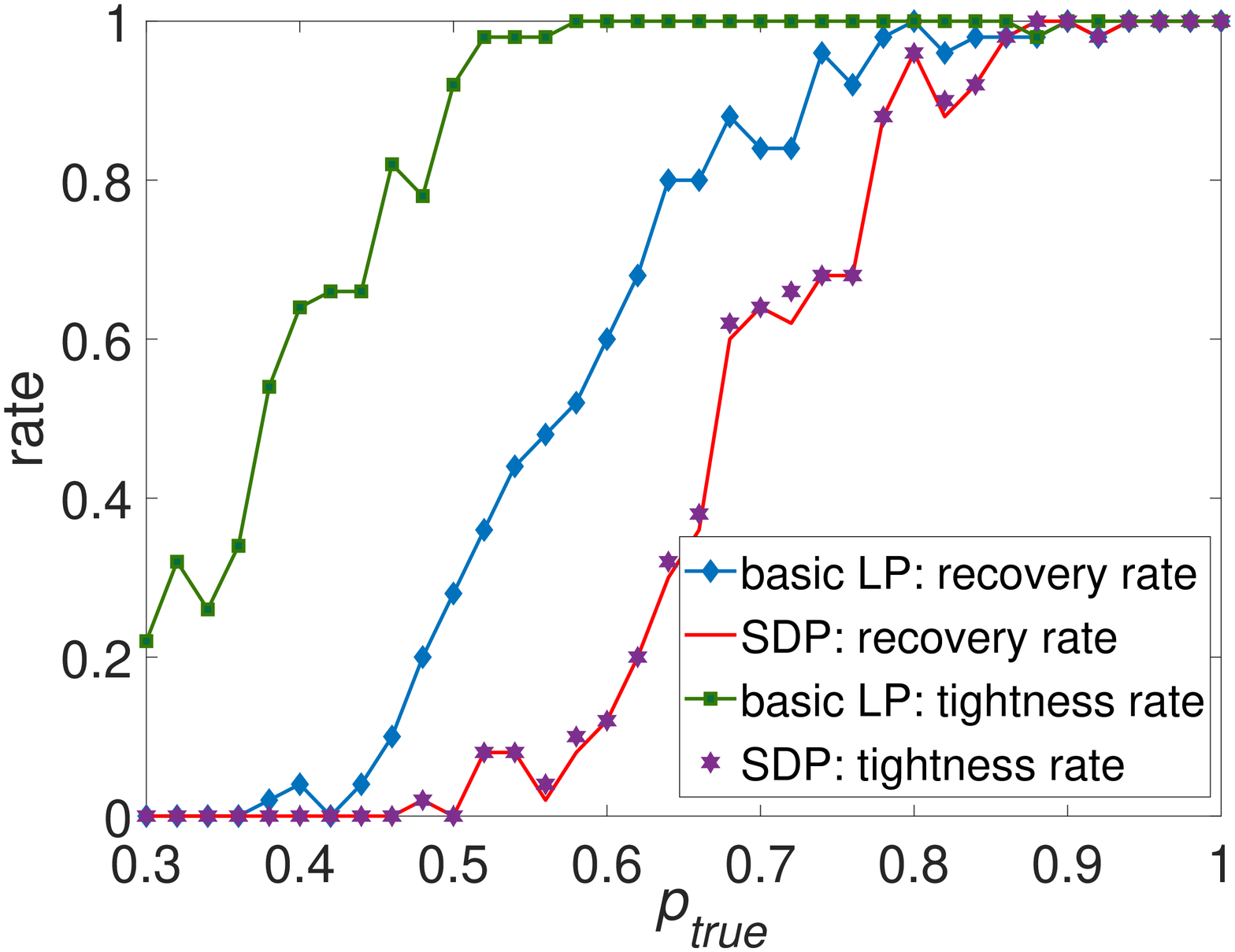, scale=0.225, trim=25mm 0mm 130mm 0mm, clip}}
 \subfigure [$p_{\rm obs}= 1.0, \; d=4$]{\label{fig2b}\epsfig{figure=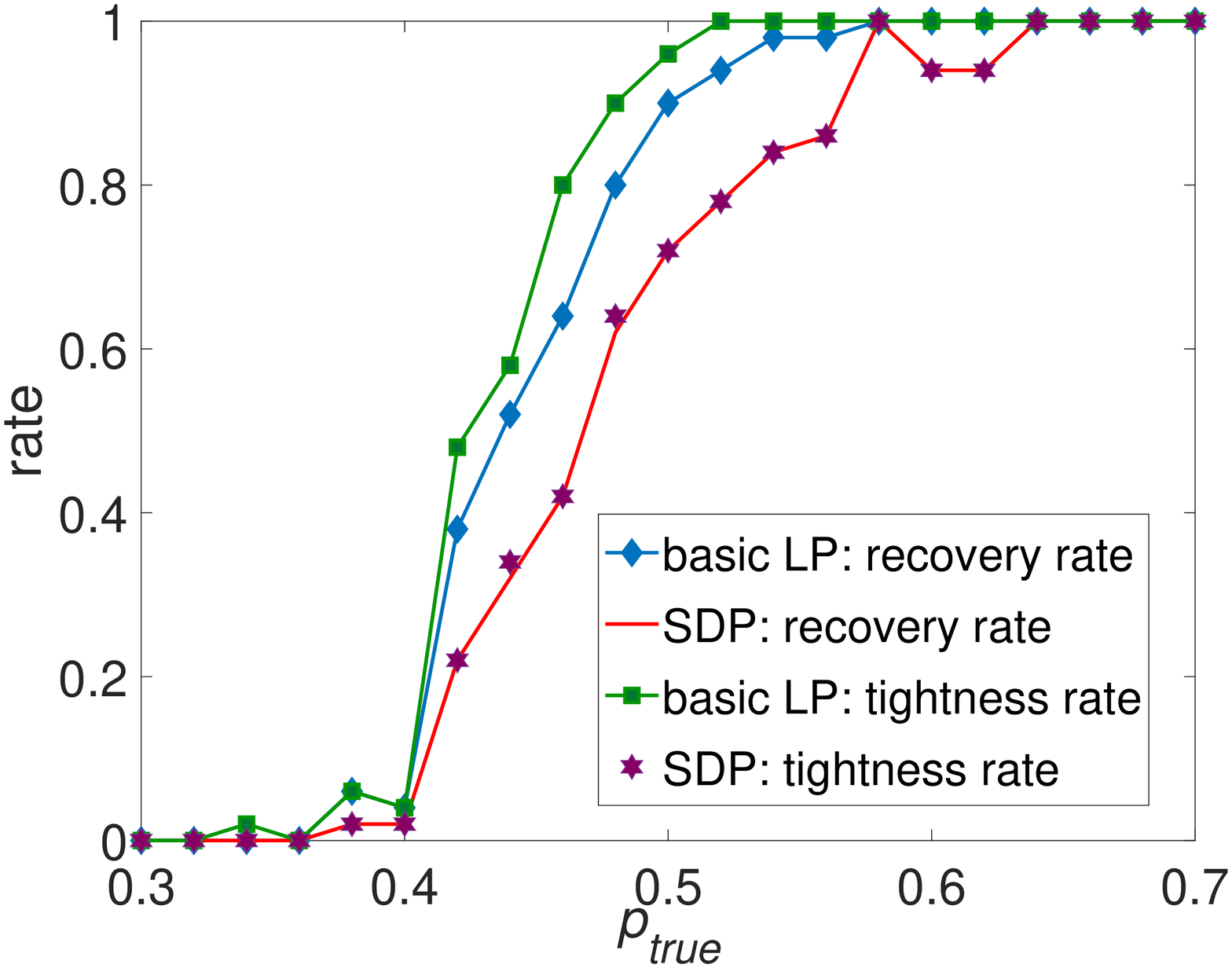, scale=0.225, trim=25mm 0mm 130mm 0mm, clip}}
  \subfigure[$p_{\rm obs}= 0.5, \; d=4$]{\label{fig2e}\epsfig{figure=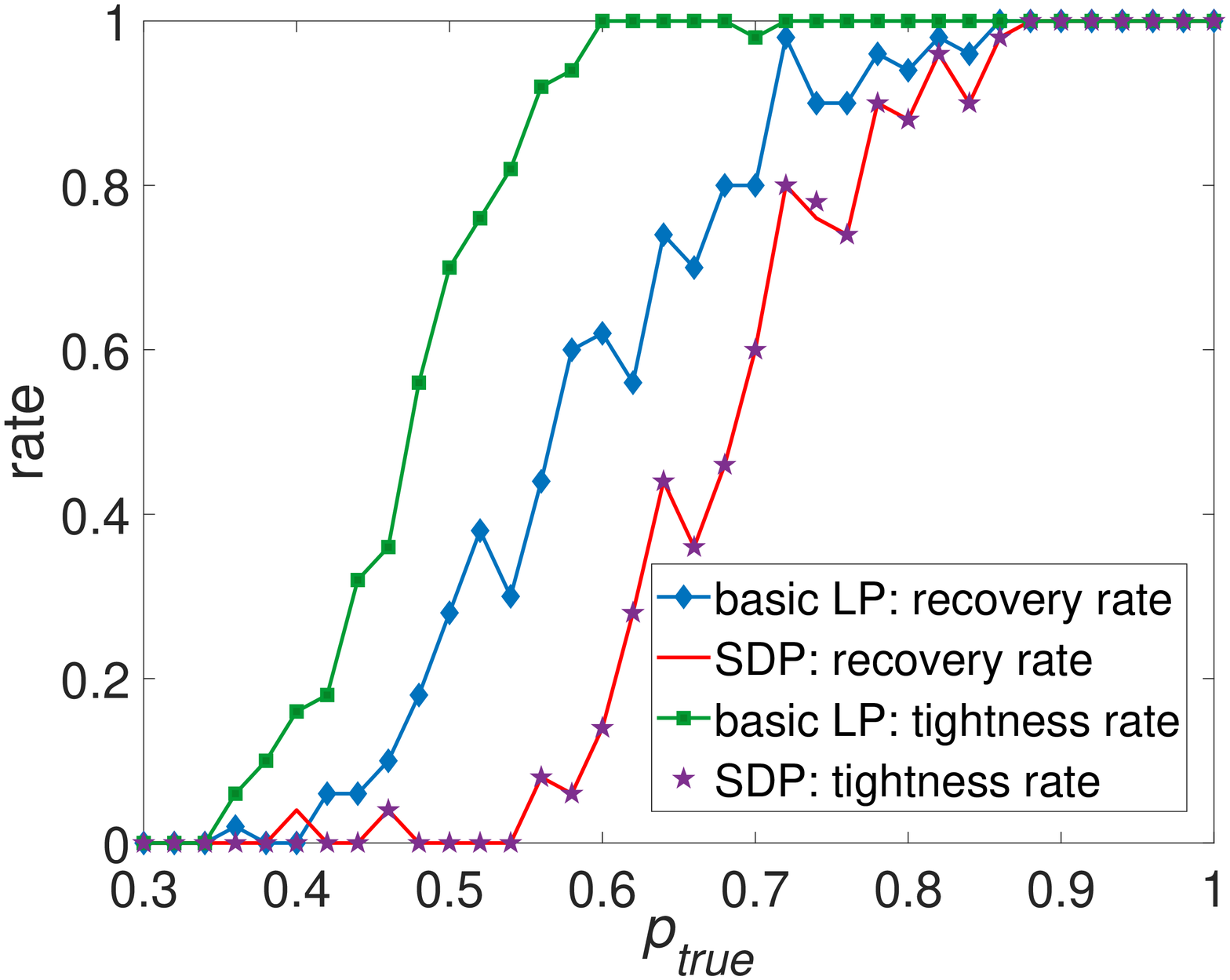, scale=0.225, trim=25mm 0mm 130mm 0mm, clip}}
 \subfigure[$p_{\rm obs}= 1.0, \; d=5$]{\label{fig2c}\epsfig{figure=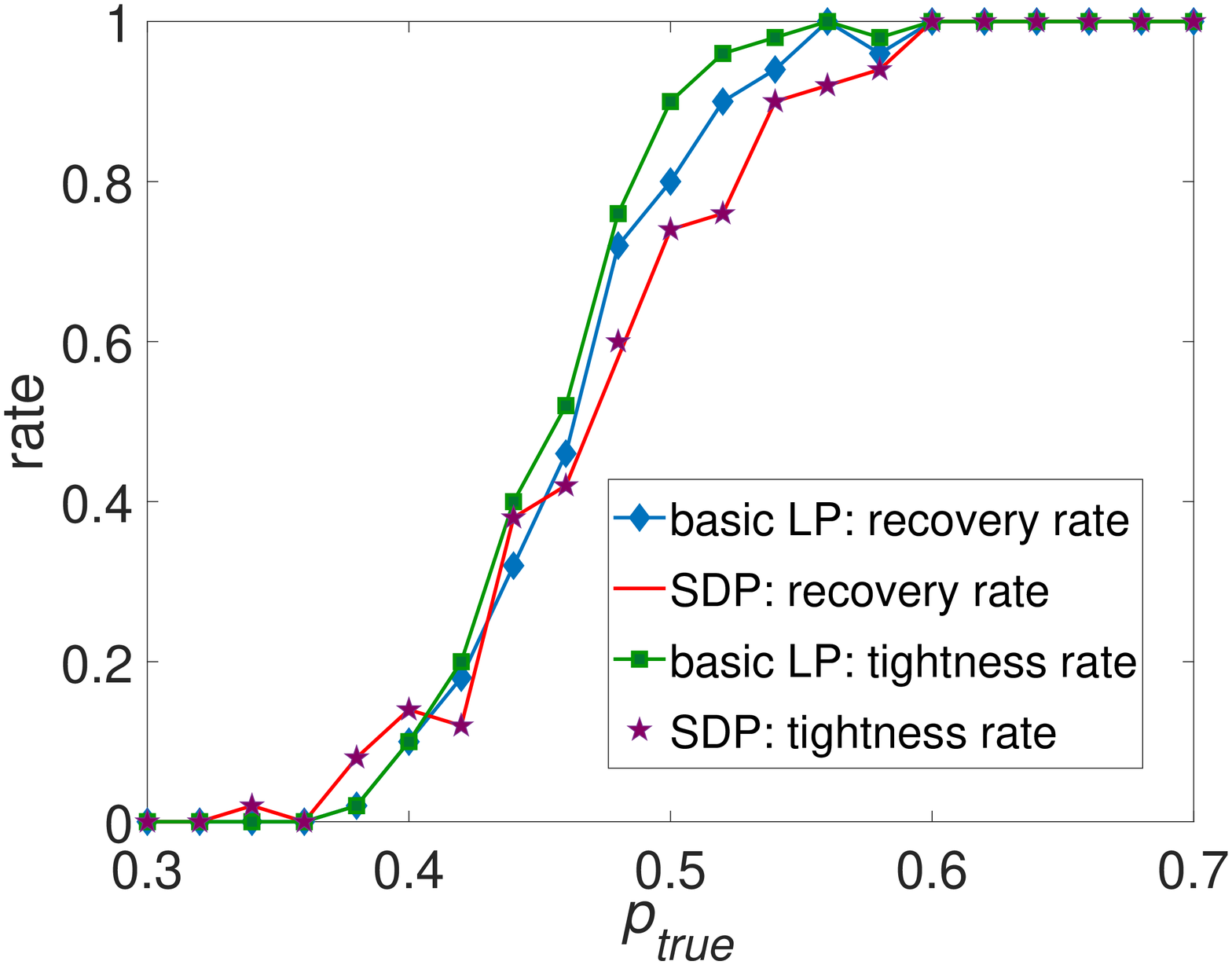, scale=0.225, trim=25mm 0mm 130mm 0mm, clip}}
    \subfigure[$p_{\rm obs}= 0.5, \; d=5$]{\label{fig2f}\epsfig{figure=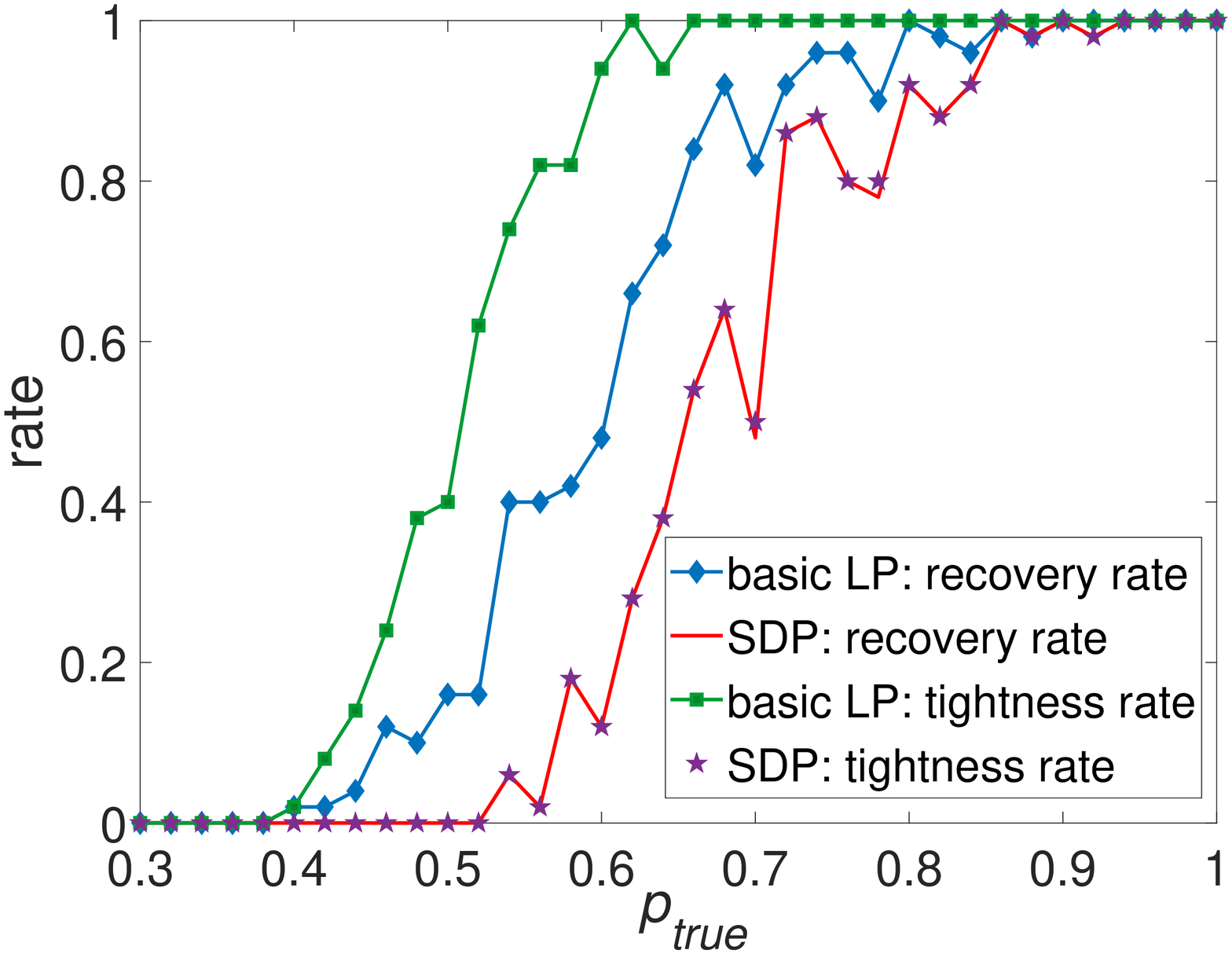, scale=0.225, trim=25mm 0mm 130mm 0mm, clip}}
 \caption{Empirical rates of recovery and tightness of the
basic LP relaxation versus the SDP relaxation under the random corruption model.}
\label{figure2}
\end{figure}

\begin{figure}[htbp]
 \centering
 \epsfig{figure=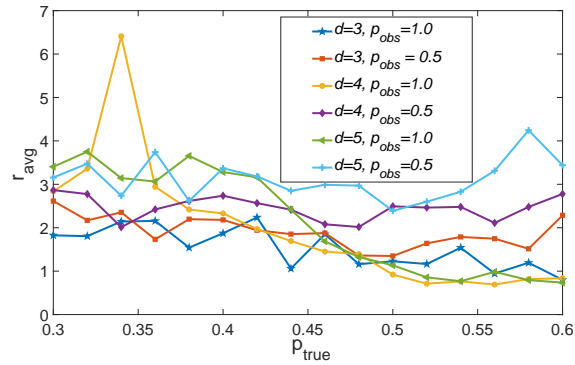, scale=0.25, trim=10mm 0mm 50mm 0mm, clip}
 \caption{Comparing the CPU times of solving the basic LP relaxation ($T_{\rm LP}$) vs solving the SDP relaxation ($T_{\rm SDP}$), where we define $r_{\rm avg} = \avg[\frac{T_{\rm SDP}}{T_{\rm LP}}]$.}
\label{figure4}
\end{figure}

All LPs and SDPs are solved with {\tt GAMS/MOSEK}~\cite{mosek}.
Results are shown in Figure~\ref{figure2}: in all cases, the basic LP outperforms the SDP in terms of recovery and tightness.
Moreover, the CPU times are compared in Figure~\ref{figure4}. As can be seen from this figure, in most of the experiments the LP solver is 2-4 times faster than the SDP solver. We should remark that we ran these experiments setting all options to their default values. That is, we did not use any technique to expedite either the LP or the SDP solver.
In particular, for solving the LP relaxation employing a cutting plane type algorithm together with dual simplex often leads to significant speedups.
From this experiment we make three important observations:
\begin{itemize}[leftmargin=*]
\item [(i)] Unlike the SDP relaxation, the quality of the basic LP relaxation degrades by increasing $d$; this is due to the fact that
the basic LP includes only $\theta(d)$ of consistency inequalities which are $\theta(d 2^{2d})$ in total; as we detail next, we address this shortcoming by considering a
stronger LP relaxation, \ie Problem~\eqref{lp2}.

\item [(ii)] Unlike the SDP relaxation, the quality of the basic LP relaxation does \emph{not}
quickly degrade by decreasing $p_{\rm obs}$. For the random model described above and for the SDP relaxation, the authors of~\cite{CheGuiHua14} obtain a recovery guarantee of the form  $p_{\rm true} > \theta(\frac{1}{\sqrt{p_{\rm obs}}})$. Recall that our theoretical analysis in Section~\ref{sec:dualCertificate} is under the assumption that the map graph is complete, \ie $p_{\rm obs} = 1.0$. Our numerical experiments suggest that in case of the basic LP, as a function of  $p_{\rm obs}$, the recovery threshold grows slower than
$\frac{1}{\sqrt{p_{\rm obs}}}$; we plan to explore the exact form of this relationship as a next step.

\item [(iii)] Unlike the SDP relaxation, the basic LP relaxation returns a binary solution in many cases for which it fails in recovering the ground truth. Recall that if the LP solution is binary, then it is optimal for Problem~\eqref{jom}. Indeed, in most practical settings, asking for the exact recovery of the ground truth is not realistic. Hence obtaining sufficient conditions for the tightness of the LP relaxation under proper stochastic models is of key importance and is a topic of future research.

\end{itemize}

\begin{figure}[htbp]
 \centering
 \subfigure [$p_{\rm obs}= 1.0$]{\label{fig3a}\epsfig{figure=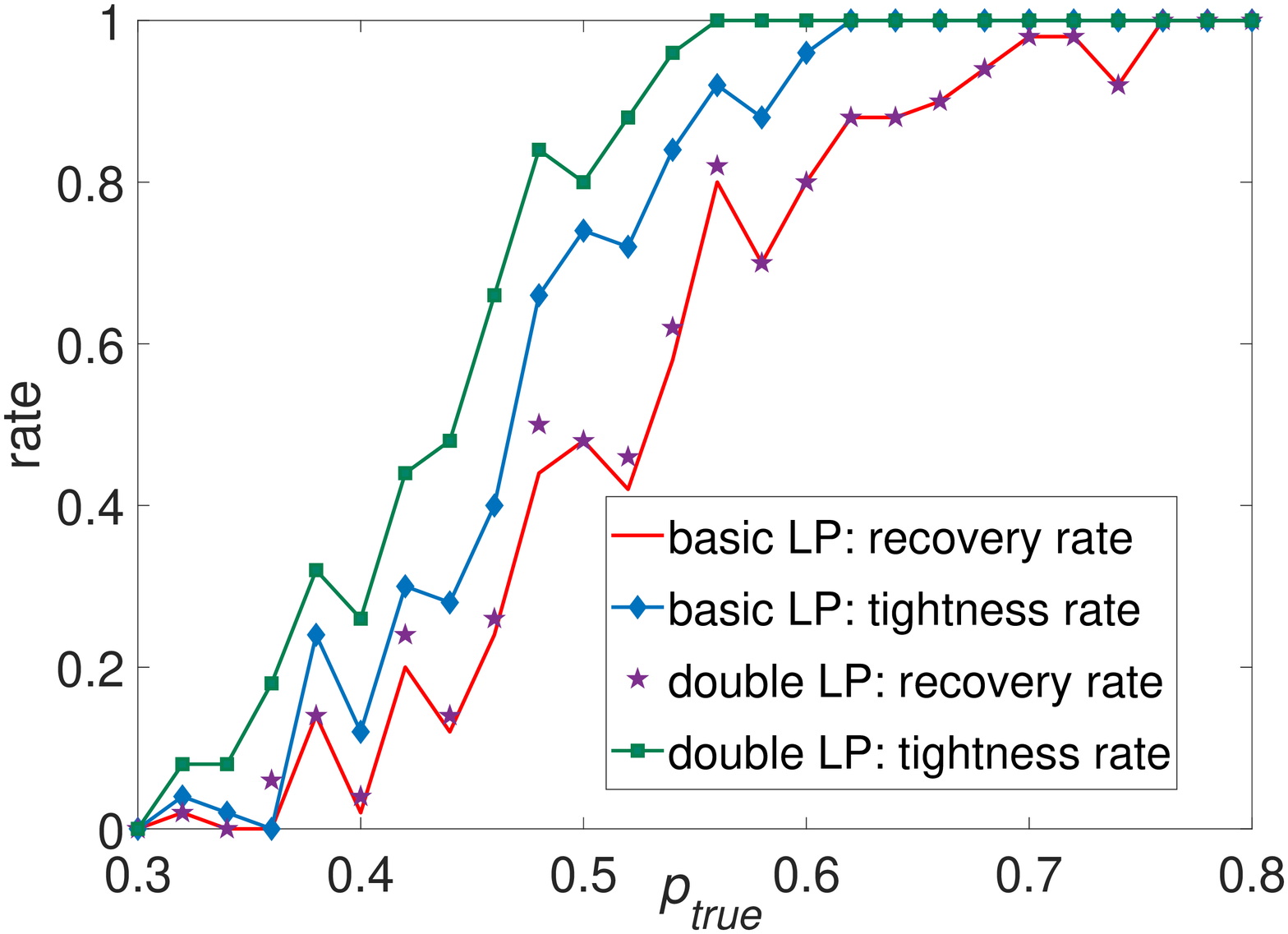, scale=0.23, trim=25mm 0mm 120mm 0mm, clip}}
   \subfigure[$p_{\rm obs}= 0.75$]{\label{fig3b}\epsfig{figure=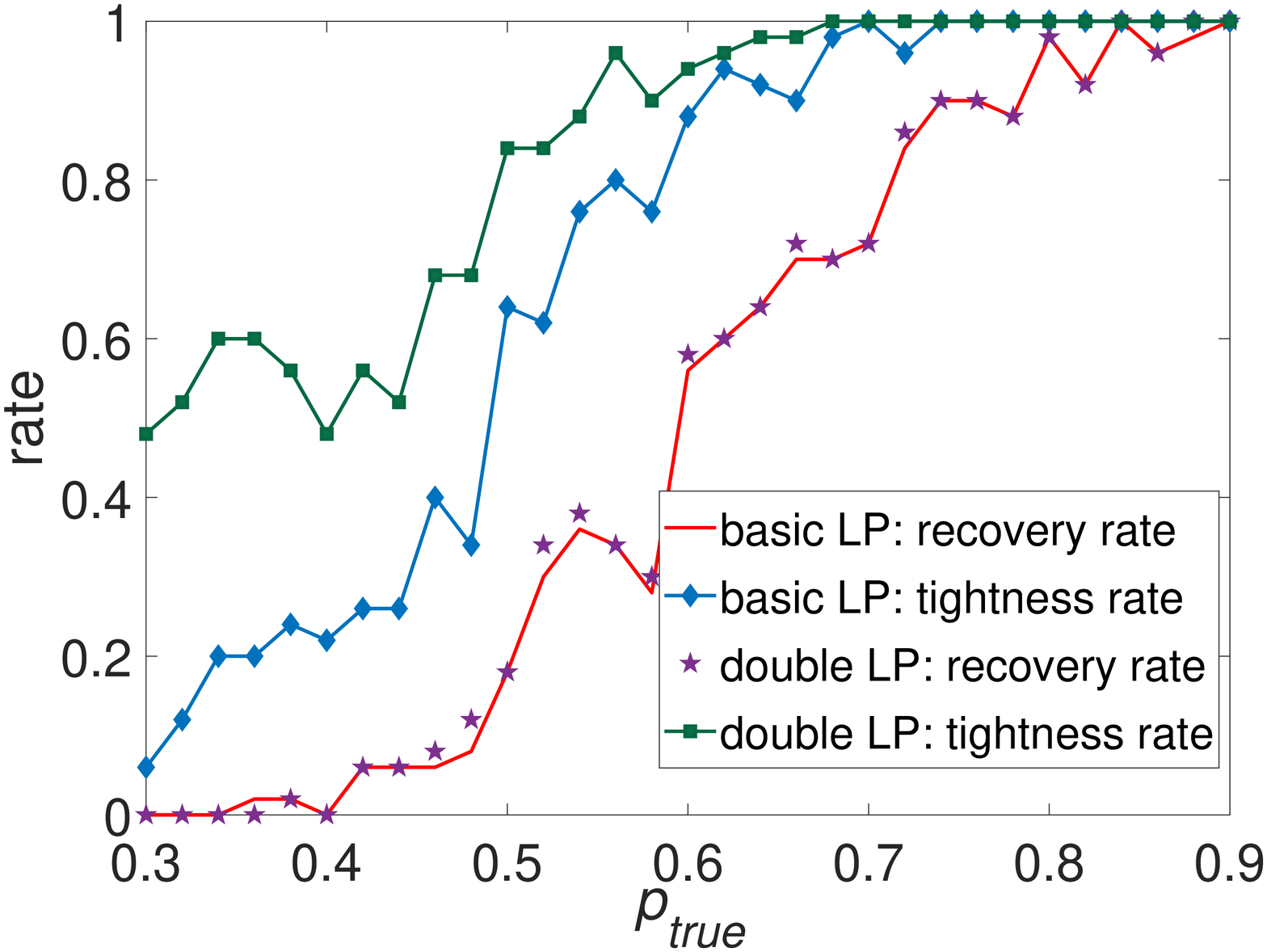, scale=0.23, trim=25mm 0mm 120mm 0mm, clip}}
 \subfigure [$p_{\rm obs}= 0.5$]{\label{fig3c}\epsfig{figure=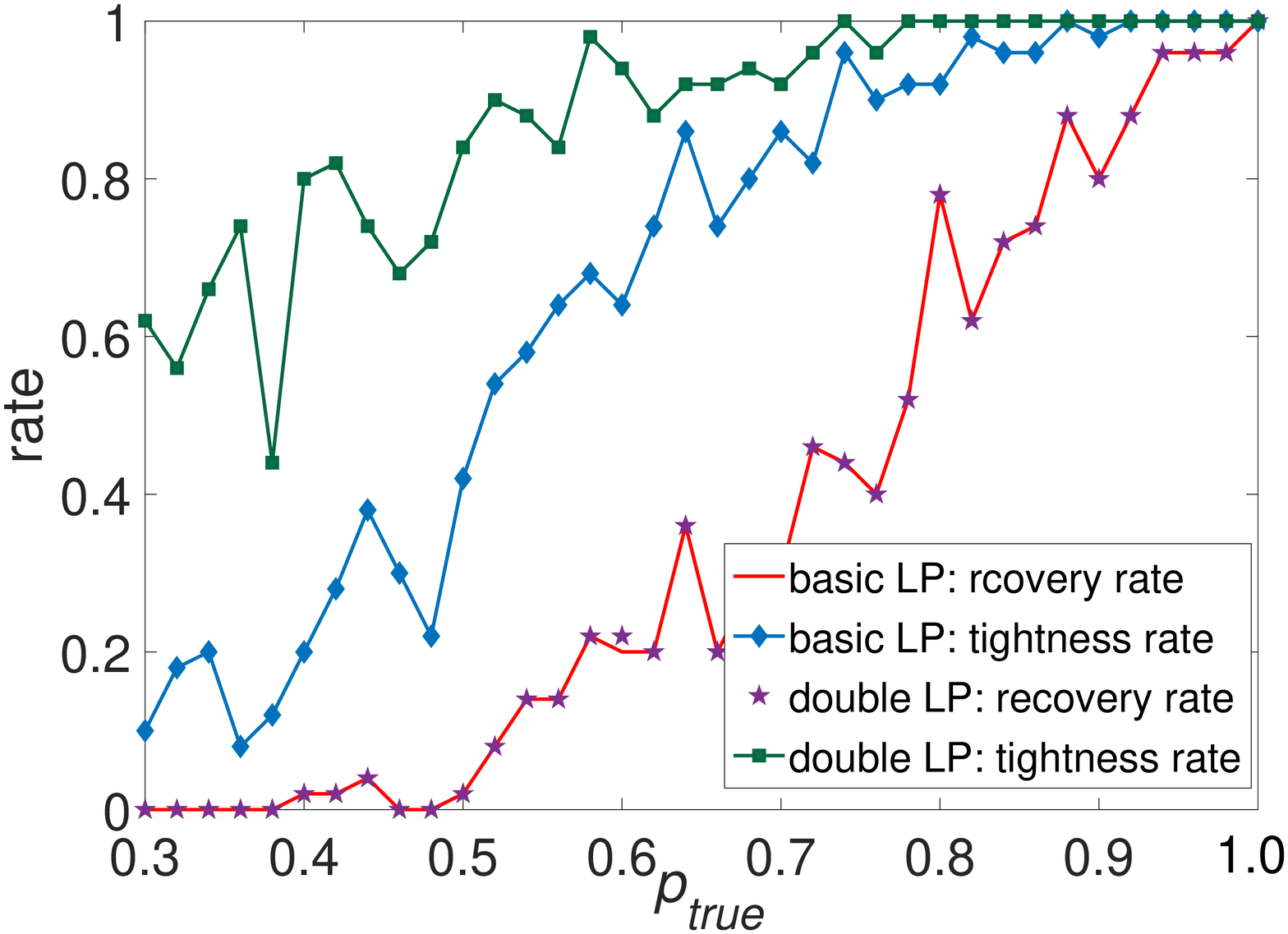, scale=0.23, trim=25mm 0mm 120mm 0mm, clip}}
 \caption{Empirical rates of recovery and tightness of the
basic LP relaxation versus the double LP relaxation under the random corruption model.}
\label{figure3}
\end{figure}

\subsection{The basic LP versus the double LP}

We now illustrate the impact of consistency inequalities~\eqref{superg} in strengthening the basic LP relaxation~\eqref{primal}. To this end, we consider the following simple two-step algorithm:
\begin{enumerate}
\item The basic LP relaxation, \ie Problem~\eqref{primal} is solved; if the optimal solution $\tilde X$ is binary-valued, then the algorithm terminates and returns
$\tilde X$ as the optimal solution.
\item If $\tilde X$ is not binary-valued, then at most $1000$
consistency inequalities that violate  $\tilde X$ are generated by solving the separation problem~\eqref{separate}. The cutting planes are added to the basic LP and the augmented LP is solved using the dual simplex algorithm with $\tilde X$ as the starting point. The solution of this augmented LP is reported as the optimal solution.
\end{enumerate}
We refer to this two-step algorithm as the~\emph{double LP}.
To understand the impact of consistency inequalities~\eqref{superg}, we compare the performance of basic LP and double LP with respect to recovery and tightness rates.

We set $n = 10$, $d = 5$, $p_{\rm obs} \in \{0.5, 0.75, 1\}$, and $p_{\rm true} \in [0.3:0.02:1.0]$. As before, for each combination of $(n,d, p_{\rm true},p_{\rm obs})$, we conduct 50 random trials. As the cutting plane algorithm described above cannot be efficiently implemented in the {\tt GAMS} modeling language, the two-step algorithm is implemented in {\tt JuMP}~\cite{DunningHuchetteLubin2017} and all corresponding LPs are solved with {\tt Gurobi}~\cite{gurobi}. Results are depicted in Figure~\ref{figure3}; as can be seen from this figure, while there is no visible difference between the recovery rates of the basic LP and the double LP, the tightness rate of the double LP is significantly better than that of the basic LP. This improvement is more significant for {\rm sparser} problems; that is, problems with smaller $p_{\rm obs}$.
We believe that this phenomenon is due to the cutting plane strategy employed in Step~2 of double LP and is not an inherent property of consistency inequalities.
Namely, in order to keep the overall computational cost low, in Step~2 of double LP, we add the first 1000 violated consistency inequalities obtained by solving the separation problem~\eqref{separate}.
For denser problems, that is, for problems with larger $p_{\rm obs}$, in almost all cases,  many more consistency inequalities are violated by $\tilde X$.
Hence, an upper bound of 1000 for the number of violated inequalities is often too small. However, increasing this number will increase the cost of solving the augmented LP. Hence, for dense problems, a more elaborate strategy such as adding top 1000 most violated cuts or conducting multiple rounds of cut generation seems more appropriate.

We conclude by emphasizing that our goal in this experiment was to convey the effectiveness of consistency inequalities~\eqref{superg}. Indeed, solving double LP is not equivalent to solving Problem~\eqref{lp2} as double LP contains only a small subset of consistency inequalities~\eqref{superg}. An efficient solution of Problem~\eqref{lp2} via devising effective cut generation and cut management algorithms for consistency inequalities is a subject of future research. Finally, an effective incorporation of consistency inequalities in a branch-and-cut framework to solve Problem~\eqref{jom} to global optimality is an interesting future direction as well.

\section{Appendix}
\label{appendix}

\subsection{Facetness for block consistency inequalities}
\label{appendix1}

\paragraph{Proof of Proposition~\ref{newfacet}.} Without loss of generality, we prove for any nonempty $D_1, D_2, D_3 \subseteq [d]$
with $|D_1|+|D_2| > |D_3|$, the following defines a facet of the joint matching polytope $\C_{n,d}$:
\begin{equation}\label{nf1}
\sum_{l \in D_3}{\Big(\sum_{t\in D_1}{X_{lt}(1,2)}+\sum_{q\in D_2}{X_{lq}(1,3)}\Big)}-\sum_{t \in D_1}{\sum_{q \in D_2}{X_{tq}(2,3)}} \leq |D_3|.
\end{equation}
We start by identifying the set of consistent partial maps in $\C_{n,d}$ that satisfy inequality~\eqref{nf1} tightly. Subsequently, we show that any nontrivial valid
inequality $\alpha X \leq \beta$ for $\C_{n,d}$ that is satisfied tightly at all such maps coincides with~\eqref{nf1} up to a positive
scaling. Since $\C_{n,d}$ is full dimensional, this in turn implies that inequality~\eqref{nf1} defines a facet of $\C_{n,d}$.

A consistent partial map is binding for inequality~\eqref{nf1}, if for every $l \in D_3$, there exists $e_l \in \M_X$ with $1_l \in e_l$ satisfying one of the following conditions:
\begin{itemize}
\item [(i)]  $2_t \in e_l$ for some $t \in D_1$ and $3_q \notin e_l$ for all $q \in D_2$,
\item [(ii)] $2_t \notin e_l$ for all $t \in D_1$ and $3_q \in e_l$ for some $q \in D_2$,
\item [(iii)] $2_t \in e_l$ for some $t \in D_1$ and $3_q \in e_l$ for some $q \in D_2$.
\end{itemize}
%
Now consider a consistent partial map $\M^1_X$ satisfying conditions~(i) or~(ii) above for all $l \in D_3$ such that for some $t' \in D_1$ we have $2_{t'} \notin e_l$ for all $l \in D_3$. Notice that such a consistent partial map exists since by assumption $|D_1| +|D_2| > |D_3|$.
Moreover, suppose that $\M^1_X$ contains no matched pairs other than those required by conditions~(i)-(ii), \ie $\M^1_X = \{e_l,\; l \in D_3\}$ with $|e_l|= 2$ for all $l \in D_3$. Next consider another consistent partial map obtained from $\M^1_X$ by replacing a matched pair of the form $(1_{\hat l},2_{\hat t})$ for some $\hat t \in D_1$
and  $\hat l \in D_3$ by $(1_{\hat l},2_{t'})$, where $t' \in D_1$ is the index defined above.
Notice that this \emph{flipping} operation results in a consistent partial map that is also binding for~\eqref{nf1}. Substituting these two partial maps in $\alpha X = \beta$ yields
$\alpha_{\hat l \hat t}(1,2) =\alpha_{\hat l t'}(1,2)$. Using a similar line of arguments for all possible consistent partial maps satisfying conditions~(i) or~(ii) for all $l \in D_3$ together with $2_{t'} \notin e_l$ for some $t' \in D_1$ ($3_{t'} \notin e_l$ for some $t' \in D_2$) for all $l \in D_3$, we conclude that for each $l \in D_3$, we have
\begin{equation}\label{nf2}
\alpha_{lt}(1,2) = \alpha_{lq}(1,3), \quad \forall t \in D_1, \; \forall q \in D_2.
\end{equation}
Next consider a consistent partial map $\M^2_X$ satisfying conditions~(i) or~(ii) above for all $l \in \D_3 \setminus \{\hat l\}$ and suppose that for $\hat l$ condition~(iii) is satisfied, \ie
$e_{\hat l} = (1_{\hat l}, 2_{\hat t}, 3_{\hat q})$ for some $\hat t \in D_1$ and $\hat q \in D_2$. Consider a second consistent partial map satisfying
conditions~(i) or~(ii) above for all $l \in \D_3 \setminus \{\tilde l\}$ where $\tilde l \neq \hat l$ and suppose that for $\tilde l$ condition~(iii) is satisfied with
$e_{\tilde l} = (1_{\tilde l}, 2_{\hat t}, 3_{\hat q})$. In addition, suppose that in both these maps, no additional matched pairs other than those
required by conditions~(i)-(iii) exist. Substituting these two maps in $\alpha X = \beta$ and using~\eqref{nf2} yield $\alpha_{\hat l \hat t}(1,2) = \alpha_{\tilde l \hat t}(1,2)$ and $\alpha_{\hat l \hat q}(1,3) = \alpha_{\tilde l \hat q}(1,3)$. Using a similar line of arguments for all possible pairs of partial maps satisfying assumptions above, we obtain:
\begin{equation}\label{nf3}
\alpha_{lt}(1,2) = \alpha_{lq}(1,3) = \frac{\beta}{|D_3|}, \quad \forall t \in D_1, \; \forall q \in D_2, \; \forall l \in D_3.
\end{equation}
Moreover, substituting the partial map corresponding to $\M^2_X$ in $\alpha X = \beta$ and using~\eqref{nf3} we obtain $\alpha_{\hat t \hat q}(2,3) = -\beta/|D_3|$.
It then follows that
\begin{equation}\label{nf4}
\alpha_{tq}(2,3) = -\frac{\beta}{|D_3|}, \quad \forall t \in D_1, \; \forall q \in D_2.
\end{equation}
Next consider a consistent partial map $\M^3_X$ satisfying conditions~(i) or~(ii) above for all $l \in \D_3$ of the form $\M^3_X = \{e_l: l\in D_3\}$
with $|e_l| = 2$ for all $l \in D_3$. Construct another consistent partial map of the form $\M^3_X \cup \{(i_t, j_q)\}$ for some
$(i,j,t,q) \in \Q$, where
\begin{align*}
Q :=& \Big\{(i,j,t,q): 4 \leq i < j \leq n, t \in [d], q \in [d]\Big\} \cup \Big\{(1,j,t,q): j \geq 4, t \in [d] \setminus D_3, q \in [d]\Big\}\\
\cup & \Big\{(2,j,t,q): 4 \leq j \leq n, t \in [d] \setminus D_1, q \in [d]\Big\}
 \cup   \Big\{(3,j,t,q): 4 \leq j \geq n, t \in [d] \setminus D_2, q \in [d]\Big\}\\
 \cup & \Big\{(1,2,t,q): t \in [d] \setminus D_3, q \in [d] \setminus D_1\Big\} \cup \Big\{(1,3,t,q): t \in [d] \setminus D_3, q \in [d] \setminus D_2\Big\}\\
 \cup & \Big\{(2,3,t,q): t \in [d] \setminus D_1, q \in [d] \setminus D_2\Big\}.
\end{align*}
Substituting these two partial maps in $\alpha X = \beta$ yields
\begin{equation}\label{nf5}
\alpha_{tq}(i,j) = 0, \quad \forall (t,q,i,j) \in \Q.
\end{equation}
Consider the consistent partial map $\M^3_X$ defined above with the additional assumption that for some $\hat t \in D_1$ (resp. $\hat q \in D_2$), we have
$(1_l, 2_{\hat {t}}) \notin \M^3_X$ (resp. $(1_l, 3_{\hat {q}}) \notin \M^3_X$)
for all $l \in D_3$. Notice that such a partial map always exists since by assumption $|D_1|+|D_2| > |D_3|$.  Construct another consistent partial map of the form:
\begin{itemize}
\item $\bar \M^3_X = \M^3_X \cup \{(1_{\hat l}, 2_{\hat t})\}$ (resp. $\bar \M^3_X =\M^3_X \cup \{(1_{\hat l}, 3_{\hat q})\}$) for some $\hat l \notin D_3$. Substituting $\M^3_X, \bar \M^3_X$
in $\alpha X = \beta$ gives
$\alpha_{\hat l \hat t}(1,2) =0$ (resp. $\alpha_{\hat l \hat t}(1,3) =0$). Using a similar line of arguments for all possible partial maps satisfying the assumptions above, we obtain
\begin{equation}\label{nf6}
\alpha_{lt}(1,2) = \alpha_{lq}(1,3) = 0, \quad \forall l \in [d] \setminus D_3, \forall t \in D_1, \forall q \in D_2.
\end{equation}

\item $\bar \M^3_X = \M^3_X \cup \{(2_{\hat t}, j_s)\}$ (resp. $\bar \M^3_X =\M^3_X \cup \{(3_{\hat q}, j_s)\}$ ) for some $4 \leq j \leq n$ and some $s \in [d]$. Substituting $\M^3_X, \bar \M^3_X$ in $\alpha X = \beta$ gives
$\alpha_{\hat t s }(2,j) =0$ (resp. $\alpha_{\hat q s}(3, j) =0$). Using a similar line of arguments for all possible partial maps satisfying the assumptions above, we obtain
\begin{equation}\label{nf7}
\alpha_{ts}(2,j) = \alpha_{qs}(3,j) = 0, \quad \forall 4 \leq j \leq n, \forall t \in D_1, \forall q \in D_2, \forall s \in [d].
\end{equation}
\item $\bar \M^3_X = \M^3_X \cup \{(2_{\hat t}, 3_q)\}$ for some $q \in [d] \setminus D_2$ (resp. $\bar \M^3_X = \M^3_X \cup \{(2_t, 3_{\hat q})\}$
for some $t \in [d] \setminus D_1$). Substituting $\M^3_X, \bar \M^3_X$ in $\alpha X = \beta$ gives
$\alpha_{\hat t s }(2,j) =0$ (resp. $\alpha_{\hat q s}(3, j) =0$). Using a similar line of arguments for all possible partial maps satisfying the assumptions above, we obtain
\begin{equation}\label{nf9}
\alpha_{tq}(2,3) = 0, \quad \forall t \in [d]\setminus D_1, q \in D_2 \; {\rm or} \; \forall t \in D_1, q \in [d] \setminus D_2.
\end{equation}
\end{itemize}
Consider a consistent partial map $\M^4_X$ satisfying conditions~(i) or~(ii) above for all $l \in \D_3$ of the form $\M^4_X = \{e_l: l\in D_3\}$
with $|e_l| = 2$ for all $l \in D_3$. Consider some $\hat l \in D_3$ for which $e_{\hat l} = (1_{\hat l}, 2_{\hat t})$ for some $\hat t \in D_1$
(resp. $e_{\hat l} = (1_{\hat l}, 3_{\hat q})$ for some $\hat q \in D_2$).
Construct another consistent partial map of the form
\begin{itemize}
\item $\bar \M^4_X = \M^4_X \cup \{(1_{\hat l}, 3_{\hat q}), (2_{\hat t}, 3_{\hat q})\}$ for some $\hat q \in [d] \setminus D_2$
($\bar \M^4_X = \M^4_X \cup \{(1_{\hat l}, 2_{\hat t}), (2_{\hat t}, 3_{\hat q})\}$ for some $\hat t \in [d] \setminus D_1$).
Substituting these two partial maps in $\alpha X = \beta$ and using~\eqref{nf9} we obtain $\alpha_{\hat l \hat t}(1,2) = 0$ (resp. $\alpha_{\hat l \hat q}(1,3) = 0$).
More generally it can be checked that
\begin{equation}\label{nf8}
\alpha_{lt}(1,2) = \alpha_{lq}(1,3) = 0, \quad \forall l \in D_3, \forall t \in [d] \setminus D_1, \forall q \in [d] \setminus D_2.
\end{equation}

\item $\bar \M^4_X = \M^4_X \cup \{(1_{\hat l}, j_s), (2_{\hat t}, j_s)\}$ (resp. $\bar \M^4_X = \M^4_X \cup \{(1_{\hat l}, j_s), (3_{\hat q}, j_s)\}$)
for some $4 \leq j \leq n$ and $s \in [d]$. Substituting these two partial maps in $\alpha X = \beta$ and using~\eqref{nf7} we obtain $\alpha_{\hat l s}(1,j) = 0$.
More generally, it can be checked that
\begin{equation}\label{nf10}
\alpha_{ls}(1,j) = 0, \quad \forall 4 \leq j \leq n, \forall l \in D_3, \forall s \in [d].
\end{equation}
\end{itemize}
From~\eqref{nf1}-\eqref{nf10} it follows that the inequality $\alpha X \leq \beta$ can be equivalently written as
$$\frac{\beta}{|D_3|} \Big(\sum_{l \in D_3}{\Big(\sum_{t\in D_1}{X_{lt}(1,2)}+\sum_{q\in D_2}{X_{lq}(1,3)}\Big)}-\sum_{t \in D_1}{\sum_{q \in D_2}{X_{tq}(2,3)}}\Big) \leq \beta.$$
Since $\alpha X  \leq \beta$ is nontrivial and valid, we have $\beta > 0$ and this completes the proof.

\subsection{Size inequalities}
\label{appendix2}
In this section, we assume that an upper bound $\hat m$ on the size of the universe is available and we utilize $\hat m$ to improve our proposed LP relaxation.
As in Section~\ref{sec:lp}, consider a collection of $n$ objects each consisting of $d_i$, $i \in [n]$ elements; that is, we have a total number of $\bar d = \sum_{i \in [n]}{d_i}$ elements. Let $\N$ denote the set consisting of all these elements, \ie $\N= \cup_{i \in [n]}{\cup_{t \in [d_i]}{i_t}}$. Define $d_{\min} = \min_{i \in [n]} d_i$ and $d_{\max} = \max_{i \in [n]} d_i$, where we assume $d_{\max} \geq 2$.
Let $\hat m \in \{d_{\max}, \ldots ,\bar d-1\}$.
Notice that $\hat m= \bar d$ is a trivial upper bound and cannot be exploited to improve the relaxation.
\begin{proposition}
Consider a subset $\N' \subset \N$ of cardinality $\hat m + 1$.
Then the following inequality is valid for the feasible region of Problem~\eqref{ip}:
\begin{equation}\label{sizeineq}
\sum_{1 \leq i < j \leq n}{\sum_{\substack{t \in [d_i] q \in [d_j]: \\ i_t, j_q \in \N'}}{X_{tq}(i,j)}} \geq 1,
\end{equation}
where $i_t$ and $j_q$ denote the $t$-th element of object $\S_i$ and the $q$-th element of object $\S_j$, respectively.
\end{proposition}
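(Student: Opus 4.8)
The plan is to argue by pigeonhole on the elements of the universe. First I would recall the structure of a feasible point of Problem~\eqref{ip}: by the characterization of consistent partial maps used throughout Section~\ref{sec:lp}, such an $X$ induces an equivalence relation on the element set $\N$, where for $i \neq j$ we set $i_t \sim j_q$ exactly when $X_{tq}(i,j) = 1$, together with $i_t \sim i_t$. Reflexivity and symmetry are immediate (using $X(j,i) = X^T(i,j)$), and transitivity is precisely cycle consistency, i.e.\ inequalities~\eqref{gtri}; moreover, constraints~\eqref{subD} force each equivalence class to meet each object in at most one element, since two elements of the same object lying in a common class would place two ones in a single row or column of some block $X(i,j)$. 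The classes are then exactly the elements of the universe $\U$ associated with $X$ (unmatched elements of $\N$ forming singleton classes), and under the standing assumption of this section that $\hat m$ is an upper bound on the size of the universe, the number of classes satisfies $|\U| \leq \hat m$.

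Next I would apply the pigeonhole principle. Fix any $\N' \subseteq \N$ with $|\N'| = \hat m + 1$. Since the equivalence classes partition $\N$ into at most $\hat m$ blocks, two distinct elements $i_t, j_q \in \N'$ lie in the same class. Because each class meets each object at most once, we must have $i \neq j$; relabel so that $i < j$. By definition of the relation, $i_t \sim j_q$ means $X_{tq}(i,j) = 1$, and this very term appears on the left-hand side of~\eqref{sizeineq}. As every variable $X_{tq}(i,j) \in \{0,1\}$, all terms of that sum are nonnegative, hence the sum is at least $1$, which is the claimed validity.

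I expect the only delicate point to be making the phrase ``the size of the universe is at most $\hat m$'' precise, namely fixing the convention that unmatched elements are counted as singleton universe elements, so that the equivalence classes genuinely partition all of $\N$ and the counting argument applies; this is also where the hypothesis $\hat m \in \{d_{\max}, \dots, \bar d - 1\}$ enters, since for $\hat m = \bar d$ the inequality is vacuous (there is no $\N'$ of size $\bar d + 1$) and, read over the unrestricted feasible region of~\eqref{ip}, it would fail on the empty matching. Once this convention is settled, the remainder is the one-line pigeonhole argument above, so no substantive computation is needed.
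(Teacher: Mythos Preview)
Your argument is correct and is essentially the paper's own proof, just presented as a direct pigeonhole argument rather than by contradiction: the paper observes that if every term of~\eqref{sizeineq} vanished, the $\hat m + 1$ elements of $\N'$ would all lie in distinct universe classes, contradicting $|\U|\le \hat m$. Your version makes the equivalence-relation structure and the role of~\eqref{subD} more explicit, but the underlying idea is identical.
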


\begin{proof}
To see the validity of inequality~\eqref{sizeineq},
suppose that $X_{tq}(i,j) = 0$ for all $i_t, j_q \in \N'$. Since by assumption $|\N'| = \hat m + 1$, it then follows that the size of the universe is at least $\hat m +1$,
which contradicts  with the assumption that $\hat m$ is an upper bound on the size of the universe.
\end{proof}

Henceforth, we refer to inequalities of the form~\eqref{sizeineq} for all
$\N' \subset \N$ as \emph{size inequalities}.
The following example demonstrates that size inequalities can tighten the proposed LP relaxation.

\begin{example}\label{eg3}
Let $n =3$ and $d_1 = d_2 = d_3 =2$; moreover, suppose that $\hat m = 3$.
Then it can be checked that the following is feasible for Problem~\eqref{lp2}:
\begin{equation}\label{ex3}
X(1,2) =
\begin{pmatrix}
0 & 0 \\
0 & 0
\end{pmatrix}, \qquad
X(1,3) =
\begin{pmatrix}
0 & 0 \\
0 & 1
\end{pmatrix}, \qquad
X(2,3) =
\begin{pmatrix}
1 & 0 \\
0 & 0
\end{pmatrix}.
\end{equation}
Now consider a size inequality obtained by letting $\N'=\{1_1, 1_2, 2_1, 2_2\}$ in inequality~\eqref{sizeineq}:
$$
X_{11}(1,2)+X_{12}(1,2)+X_{21}(1,2)+X_{22}(1,2) \geq 1.
$$
Substituting~\eqref{ex3} in the above inequality yields $0+0+0+0 \not\geq 1$.

Next, suppose that $\hat m = 4$  and consider the point:
\begin{equation}\label{ex4}
X(1,2) =
\begin{pmatrix}
0 & 0 \\
0 & 1
\end{pmatrix}, \qquad
X(1,3) =
\begin{pmatrix}
0 & 0 \\
0 & 0
\end{pmatrix}, \qquad
X(2,3) =
\begin{pmatrix}
0 & 0 \\
0 & 0
\end{pmatrix}.
\end{equation}
Again, it can be checked that~\eqref{ex4} is feasible for Problem~\eqref{lp2}. Consider now the size inequality obtained
by letting $\N'=\{1_1, 2_1, 2_2,3_1,3_2\}$:
$$
X_{11}(1,2)+ X_{12}(1,2)+X_{11}(1,3)+ X_{12}(1,3)+X_{11}(2,3)+ X_{12}(2,3)+X_{21}(2,3)+ X_{22}(2,3)\geq 1.
$$
Substituting~\eqref{ex4} in the above inequality yields $0+0+0+0+0+0+0+0 \not\geq 1$.
\end{example}

\begin{remark}\label{rem2}
Let us revisit the graph partitioning problem described in Remark~\ref{rem1}; namely, the problem of partitioning the nodes of a graph
into at most $K$ subsets~\cite{ChoRao93}. Let $\tilde N$ denote a subset of $[n]$ with $|\tilde N| = K +1$. Then the \emph{clique inequality}
associated with $\tilde N$ is defined as
\begin{equation}\label{clique1}
\sum_{i,j \in \tilde N: i < j}{y_{ij}} \geq 1.
\end{equation}
Now, let $t \in [d_{\min}]$ and consider a size inequality~\eqref{sizeineq} with $\N' = \{i_t: i \in I \subseteq [n]\}$, where $|I| = \hat m + 1$; that is, the inequality:
\begin{equation}\label{clique2}
\sum_{i,j \in I: i < j}{X_{tt}(i,j)} \geq 1.
\end{equation}
Comparing~\eqref{clique1} and~\eqref{clique2}, it follows that for a particular type of $\N'$, the corresponding size inequalities have the same form as
clique inequalities. However, we would like to remark that while a clique inequality~\eqref{clique1} associated with a clique
of size $r$ contains $\binom{r+1}{2}$ variables, a size inequality~\eqref{clique2} associated with $\N'$ of cardinality $r$ may contain different number of variables.

For instance, let $n = 5$, $d_i = 3$ for all $i \in \{1,\ldots,5\}$, and suppose that $\hat m = 4$. Then letting $\N'=\{1_1,1_2,1_3, 2_1, 2_2\}$ in~\eqref{sizeineq}, we obtain the size inequality
$$
X_{11}(1,2)+ X_{12}(1,2)+X_{21}(1,2)+ X_{22}(1,2)+X_{31}(1,2)+ X_{32}(1,2) \geq 1,
$$
consisting of six variables, while letting $\N'=\{1_1,2_1, 3_1, 4_1,4_2\}$, we obtain a size inequality
\begin{align*}
&X_{11}(1,2)+ X_{11}(1,3)+X_{11}(1,4)+ X_{12}(1,4)+X_{11}(2,3)+ X_{11}(2,4)+X_{12}(2,4)\\
+&X_{11}(3,4)+X_{12}(3,4) \geq 1,
\end{align*}
consisting of nine variables.

Hence size inequalities~\eqref{sizeineq} can be considered a generalization of clique inequalities for joint object matching. It is well-known that
the separation problem over clique inequalities~\eqref{clique1} is NP-hard~\cite{Eisen01}. Various heuristics for separating clique inequalities have been proposed in the literature~\cite{Eisen01} and similar ideas could be developed to efficiently separate over size inequalities. However, such a computational study is beyond the scope of this paper.
\end{remark}

\begin{remark}
Consider the following variant of joint object matching: find a collection of consistent partial maps $X(i,j) \in \{0,1\}^{d_i\times d_j}$ for all $1 \leq i < j \leq n$
corresponding to a universe of at most $\hat m$ elements so as to minimize~\eqref{objective}. It can be checked that this problem can be equivalently solved by solving the ILP obtained by adding all size inequalities~\eqref{sizeineq} to Problem~\eqref{ip}.
\end{remark}

\paragraph{Acknowledgements.} 
Antonio De Rosa has been partially supported by the NSF DMS Grant No.~1906451, the NSF DMS Grant No.~2112311, and the NSF DMS CAREER Award No.~2143124.

\bibliographystyle{plain}

\begin{footnotesize}

\end{footnotesize}

\end{document}